\title{Deriving Auslander's formula}
\author{Henning Krause}
\address{Henning Krause\\ Fakult\"at f\"ur Mathematik\\
  Universit\"at Bielefeld\\ D-33501 Bielefeld\\ Germany.}
\email{hkrause@math.uni-bielefeld.de}
\newtheorem{lemma}{Lemma}[section]
\newtheorem{proposition}[lemma]{Proposition}
\newtheorem{corollary}[lemma]{Corollary}
\newtheorem{theorem}[lemma]{Theorem}
\newtheorem*{Thm}{Theorem}
\theoremstyle{remark}
\newtheorem{remark}[lemma]{Remark}
\theoremstyle{definition}
\newtheorem{example}[lemma]{Example}
\newtheorem{definition}[lemma]{Definition}
\numberwithin{equation}{section}
\renewcommand{\mod}{\operatorname{mod}\nolimits}
\DeclareMathOperator*{\colim}{colim}
\newcommand{\Proj}{\operatorname{Proj}\nolimits}
\newcommand{\proj}{\operatorname{proj}\nolimits}
\newcommand{\card}{\operatorname{card}\nolimits}
\newcommand{\Add}{\operatorname{Add}\nolimits}
\newcommand{\Inj}{\operatorname{Inj}\nolimits}
\newcommand{\Lex}{\operatorname{Lex}\nolimits}
\newcommand{\Id}{\operatorname{Id}\nolimits}
\newcommand{\Mod}{\operatorname{Mod}\nolimits}
\newcommand{\Mor}{\operatorname{Mor}\nolimits}
\newcommand{\Ind}{\operatorname{Ind}\nolimits}
\newcommand{\umod}{\operatorname{\underline{mod}}\nolimits}
\newcommand{\End}{\operatorname{End}\nolimits}
\newcommand{\Flat}{\operatorname{Flat}\nolimits}
\newcommand{\fp}{\operatorname{fp}\nolimits}
\newcommand{\Hom}{\operatorname{Hom}\nolimits}
\newcommand{\HOM}{\operatorname{\mathcal H \;\!\!\mathit o \mathit m}\nolimits}
\newcommand{\Ker}{\operatorname{Ker}\nolimits}
\newcommand{\Coker}{\operatorname{Coker}\nolimits}
\newcommand{\eff}{\operatorname{eff}\nolimits}
\newcommand{\Eff}{\operatorname{Eff}\nolimits}
\newcommand{\Ext}{\operatorname{Ext}\nolimits}
\newcommand{\Loc}{\operatorname{Loc}\nolimits}
\newcommand{\Thick}{\operatorname{Thick}\nolimits}
\newcommand{\Ab}{\mathsf{Ab}}
\newcommand{\op}{\mathrm{op}}
\newcommand{\comp}{\mathop{\circ}}
\newcommand{\lto}{\longrightarrow}
\newcommand{\xto}{\xrightarrow}
\def\a{\alpha}
\def\b{\beta}
\def\s{\sigma}
\def\la{\lambda}
\def\Ga{\Gamma}
\def\Si{\Sigma}
\def\A{{\mathsf A}}
\def\B{{\mathsf B}}
\def\C{{\mathsf C}}
\def\D{{\mathsf D}}
\def\Sc{{\mathsf S}}
\def\Y{{\mathsf Y}}
\def\T{{\mathsf T}}
\def\bfAc{\mathbf{Ac}}
\def\bfC{{\mathbf C}}
\def\bfD{{\mathbf D}}
\def\bfK{{\mathbf K}}
\def\bfR{{\mathbf R}}
\def\bfS{{\mathbf S}}
\def\bbZ{{\mathbb Z}}
\begin{document}

\begin{abstract}
  Auslander's formula shows that any abelian category $\C$ is
  equivalent to the category of coherent functors on $\C$ modulo the
  Serre subcategory of all effaceable functors. We establish a derived
  version of this equivalence. This amounts to showing that the
  homotopy category of injective objects of some appropriate
  Grothendieck abelian category (the category of ind-objects of $\C$)
  is compactly generated and that the full subcategory of compact
  objects is equivalent to the bounded derived category of $\C$. The
  same approach shows for an arbitrary Grothendieck abelian category
  that its derived category and the homotopy category of injective
  objects are well-generated triangulated categories. For sufficiently
  large cardinals $\a$ we identify their $\a$-compact objects and
  compare them.
\end{abstract}

\subjclass[2010]{18E30 (primary), 16E35, 18C35, 18E15}

\maketitle

\setcounter{tocdepth}{1}
\tableofcontents

\section{Introduction}

Let $\A$ be a Grothendieck abelian category and let $\Inj\A$ denote
the full subcategory of injective objects. Then it is known from
Neeman's work \cite{Ne2001b,Ne2014} that the derived category
$\bfD(\A)$ and the homotopy category $\bfK(\Inj\A)$ are well-generated
triangulated categories. The present work describes for sufficiently
large cardinals $\a$ their subcategories of $\a$-compact objects.

Recall that any well-generated triangulated category $\T$ admits a
filtration $\T=\bigcup_{\a}\T^\a$ where $\a$ runs through all regular
cardinals and $\T^\a$ denotes the full subcategory of $\a$-compact
objects \cite{Ne2001}. This is an analogue of the filtration
$\A=\bigcup_{\a}\A^\a$ where $\A^\a$ denotes the full subcategory of
$\a$-presentable objects \cite{GU1971}.  Note that there exists a
regular cardinal $\a_0$ such that $\A^\a$ is abelian for all
$\a\ge\a_0$ (Corollary~\ref{co:PG}). In fact, when $\A^\a$ is abelian
and generates $\A$, then $\A^\b$ is abelian for all $\b\ge\a$
(Corollary~\ref{co:abelian}).

We distinguish two cases, keeping in mind the notation
$\A^{\aleph_0}=\fp\A$ and $\T^{\aleph_0}=\T^c$. The first case is a
generalisation of the locally noetherian case studied in
\cite{Kr2005}.

\begin{Thm}[$\a=\aleph_0$]
  Let $\A$ be a Grothendieck abelian category. Suppose that the
  subcategory $\fp\A$ of finitely presented objects is abelian and
  generates $\A$. Then the homotopy category $\bfK(\Inj\A)$ is a
  compactly generated triangulated category and the canonical functor
  $\bfK(\Inj\A)\to\bfD(\A)$ induces an equivalence
  $\bfK(\Inj\A)^c\xto{\sim}\bfD^b(\fp\A)$.
\end{Thm}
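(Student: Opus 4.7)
The plan is to construct a fully faithful triangulated functor $F : \bfD^b(\fp\A) \to \bfK(\Inj\A)$, exhibit its essential image as a set of compact generators, and use Neeman's theory to conclude that the canonical functor $Q$ restricts to the claimed equivalence $\bfK(\Inj\A)^c \xto{\sim} \bfD^b(\fp\A)$.

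\emph{Construction of $F$.} Since $\A$ is Grothendieck, the canonical functor $Q : \bfK(\Inj\A) \to \bfD(\A)$ admits a fully faithful right adjoint $i : \bfD(\A) \to \bfK(\Inj\A)$ given by K-injective resolution (Neeman). The hypothesis that $\fp\A$ is an abelian subcategory of $\A$ forces the canonical functor $\iota : \bfD^b(\fp\A) \to \bfD(\A)$ to be fully faithful. Set $F := i \circ \iota$; then $F$ is fully faithful and $Q \circ F \simeq \iota$ by construction, so any equivalence $F$ onto $\bfK(\Inj\A)^c$ would automatically be inverted by $Q$.

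\emph{Compactness -- the main obstacle.} I would show that $F(X)$ is compact in $\bfK(\Inj\A)$ for each $X \in \fp\A$; by standard triangle arguments this extends to all of $\bfD^b(\fp\A)$. In the locally noetherian case of \cite{Kr2005} the argument is short because coproducts of injectives remain injective, so coproducts in $\bfK(\Inj\A)$ agree with componentwise coproducts in $\bfK(\A)$, and it suffices that $\Hom_\A(X,-)$ commute with coproducts for $X \in \fp\A$. In the present locally coherent setting this simplification fails: coproducts of injectives need not be injective, and coproducts in $\bfK(\Inj\A)$ must be formed via the left adjoint to $\bfK(\Inj\A) \hookrightarrow \bfK(\A)$. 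The crucial extra input is that $\fp\A$ is itself abelian -- this provides $X$ with a resolution by finitely presented objects and guarantees that $\Ext^n_\A(X,-)$ commutes with filtered colimits. Combined with the adjunction $Q \dashv i$, this yields $\Hom_{\bfK(\Inj\A)}(F(X), \coprod_\la J_\la) \cong \bigoplus_\la \Hom_{\bfK(\Inj\A)}(F(X), J_\la)$.

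\emph{Generation and identification.} Suppose $J \in \bfK(\Inj\A)$ satisfies $\Hom_{\bfK(\Inj\A)}(F(X)[n], J) = 0$ for all $X \in \fp\A$ and $n \in \bbZ$. Applying $Q \dashv i$ gives vanishing of $\Ext^n_\A(X, QJ)$ for every $X \in \fp\A$, and since $\fp\A$ generates $\A$ we obtain $QJ \simeq 0$, so $J$ is acyclic. A supplementary argument -- extracting a finitely presented cocycle from a hypothetical nonzero acyclic complex of injectives and lifting it to a nonzero map from some $F(X)[n]$ -- then rules out $J \ne 0$ in $\bfK(\Inj\A)$. Hence $\{F(X) : X \in \fp\A\}$ is a set of compact generators, establishing compact generation of $\bfK(\Inj\A)$. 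By Neeman's theorem $\bfK(\Inj\A)^c$ is the thick closure of this set, which by full faithfulness of $F$ is equivalent to $\bfD^b(\fp\A)$; the relation $Q \circ F \simeq \iota$ finally identifies $Q$ with the claimed equivalence on compacts.
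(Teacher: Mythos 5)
Your overall plan (exhibit the objects $F(X)$, $X\in\fp\A$, as a set of compact generators and then invoke Neeman's theorem) is reasonable in outline, but the two steps you yourself flag as the substance of the proof are exactly where the argument is missing, and the tool you invoke for them points the wrong way. The adjunction $Q\dashv i$ computes morphisms \emph{into} K-injective complexes, $\Hom_{\bfK(\Inj\A)}(Y,iZ)\cong\Hom_{\bfD(\A)}(QY,Z)$; it says nothing about $\Hom_{\bfK(\Inj\A)}(F(X),J)$ for an arbitrary complex $J$ of injectives. (Indeed $\Hom_{\bfK(\A)}(X,J[n])$ is typically nonzero for an acyclic but non-contractible $J$ --- this is Tate cohomology --- while $\Hom_{\bfD(\A)}(X,QJ[n])=0$ trivially.) Concretely: (a) since coproducts in $\bfK(\Inj\A)$ are obtained by applying the left adjoint $\lambda$ of $\bfK(\Inj\A)\to\bfK(\A)$ to the degreewise coproduct, your compactness step reduces to showing that the unit $\coprod_\la J_\la\to\lambda\bigl(\coprod_\la J_\la\bigr)$ induces an isomorphism on $\Hom_{\bfK(\A)}(X,-)$, and neither ``$\Ext^n_\A(X,-)$ commutes with filtered colimits'' (itself not free here, as $\fp\A$ has no projectives) nor $Q\dashv i$ delivers this; and (b) your generation step needs that a complex $J$ of injectives with $\Hom_{\bfK(\A)}(X,J[n])=0$ for all $X\in\fp\A$ and $n\in\bbZ$ is \emph{null-homotopic}, not merely acyclic. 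The ``supplementary argument'' you sketch does not work as stated: for acyclic $J$ the map $J^{n-1}\to\Ker d_J^n$ is an epimorphism, so there is no formal reason why some map from a finitely presented object should fail to lift. This statement is precisely Proposition~\ref{pr:pac} (\v{S}\v{t}ov\'i\v{c}ek's result, proved by a transfinite argument via Eklof's lemma), and it is the one genuinely hard input of the whole proof.

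The paper sidesteps both difficulties by never computing coproducts or generators inside $\bfK(\A)$ directly: setting $\C=\fp\A$ and $\A\simeq\Ind\C$, Auslander's formula realises $\A$ as $\Mod\C/\Eff\C$, Lemma~\ref{le:mod} identifies $\bfK(\Inj\C)\simeq\bfD(\Mod\C)$ (compactly generated, with compacts $\bfD^b(\mod\C)$), and Proposition~\ref{pr:min} --- which rests on Proposition~\ref{pr:pac} --- identifies $\bfK(\Inj\A)$ with the Bousfield localisation $\Sc^\perp$ for $\Sc=\Thick(\eff\C)$ a subcategory of compacts. Neeman's localisation theorem (Proposition~\ref{pr:local}) then gives compact generation and the identification $\bfK(\Inj\A)^c\simeq\bfD^b(\mod\C)/\Thick(\eff\C)\simeq\bfD^b(\C)$ with no further effort. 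Since any repair of your route would in effect have to reprove Proposition~\ref{pr:pac}, you may as well adopt the localisation argument. Finally, note that full faithfulness of $\bfD^b(\fp\A)\to\bfD(\A)$ is also not automatic from ``$\fp\A$ is abelian''; it requires an argument (every epimorphism onto a finitely presented object restricts to one from a finitely presented subobject), or it falls out of the localisation sequence as in Corollary~\ref{co:derived}.
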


\begin{Thm}[$\a\neq\aleph_0$]
  Let $\A$ be a Grothendieck abelian category. Suppose that
  $\a>\aleph_0$ is a regular cardinal such that the subcategory
  $\A^\a$ is abelian and generates $\A$. Then the following holds:
\begin{enumerate}
\item The  derived category $\bfD(\A)$ is
an $\a$-compactly generated triangulated category and the inclusion
$\A^\a\to\A$ induces an equivalence $\bfD(\A^\a)\xto{\sim}\bfD(\A)^\a$.
\item The homotopy category $\bfK(\Inj\A)$ is an $\a$-compactly
  generated triangulated category and the left adjoint of the
  inclusion $\bfK(\Inj\A)\to\bfK(\A)$ induces a quotient functor
  $\bfK(\A^\a)\twoheadrightarrow\bfK(\Inj\A)^\a$.
\end{enumerate}
\end{Thm}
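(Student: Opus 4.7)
The overall strategy is Neeman's recognition criterion: a triangulated category $\T$ with coproducts is $\a$-compactly generated exactly when it admits a set of $\a$-compact generators, and $\T^\a$ is then the smallest subcategory of $\T$ containing the generators and closed under $\a$-small coproducts, shifts, cones, and summands. In both parts the natural candidate generators are the shifted stalk complexes $X[n]$ with $X\in\A^\a$.

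For part (1), I would first verify that every $X\in\A^\a$ is $\a$-compact in $\bfD(\A)$. An $\a$-small coproduct $\coprod_i Y_i$ in $\bfD(\A)$ is computed termwise in $\A$, so the claim reduces to whether $\Ext^n_\A(X,-)$ commutes with $\a$-small (equivalently $\a$-filtered) colimits for $X\in\A^\a$. For $\a>\aleph_0$ this follows by resolving $X$ by $\a$-presentable objects, which is possible because $\A^\a$ is closed under $\a$-small colimits. Since $\A^\a$ generates $\A$, Neeman's criterion produces $\a$-compact generation of $\bfD(\A)$. The canonical functor $\bfD(\A^\a)\to\bfD(\A)$ is fully faithful because $\A^\a\hookrightarrow\A$ is an exact embedding of abelian categories and $\A^\a$ is closed under extensions in $\A$ (deducible from $\A^\a$ being abelian, generating $\A$, and closed under finite colimits), so $\Ext$-groups in $\A^\a$ and $\A$ agree. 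Essential surjectivity onto $\bfD(\A)^\a$ follows because the target equals the $\a$-localizing hull of $\A^\a$ and each such closure operation has a counterpart inside $\bfD(\A^\a)$.

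For part (2), the inclusion $\bfK(\Inj\A)\hookrightarrow\bfK(\A)$ admits a left adjoint $Q$ given by K-injective resolutions, by \cite{Ne2014}. The same $\Ext$-commutation argument shows that $QX$ is $\a$-compact in $\bfK(\Inj\A)$ for every $X\in\bfK(\A^\a)$; combined with the fact that the essential image of $Q|_{\bfK(\A^\a)}$ generates $\bfK(\Inj\A)$ (since $\A^\a$ generates $\A$), this yields $\a$-compact generation. For the quotient statement one identifies $\bfK(\Inj\A)^\a$ with the $\a$-localizing hull of $Q(\A^\a)$ and observes that every such closure operation can be lifted to $\bfK(\A^\a)$, whence $Q|_{\bfK(\A^\a)}:\bfK(\A^\a)\twoheadrightarrow\bfK(\Inj\A)^\a$ is essentially surjective and factors through its kernel as a Verdier quotient. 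The principal obstacle is the $\a$-compactness assertion shared by both parts, which depends on $\Ext^n_\A(X,-)$ commuting with $\a$-filtered colimits for $X\in\A^\a$; the hypothesis $\a>\aleph_0$ is essential here, since the countable case does not provide enough room to resolve $\a$-presentable objects by $\a$-presentables.
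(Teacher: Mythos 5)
Your overall shape (exhibit a set of $\a$-compact generators and identify $\T^\a$ with their $\a$-localising hull) is reasonable, but the step carrying all the weight is not established, and the mechanism you propose would prove too much. The claim that $\a$-compactness of $X\in\A^\a$ in $\bfD(\A)$ ``reduces to whether $\Ext^n_\A(X,-)$ commutes with $\a$-filtered colimits'' is not a valid reduction: a morphism $X\to\coprod_i Y_i$ in $\bfD(\A)$ with the $Y_i$ unbounded complexes is computed via K-injective resolutions, and there is no convergent spectral sequence expressing it in terms of $\Ext$-groups between objects of $\A$. This is precisely the phenomenon behind Neeman's example, cited in the introduction, of a Grothendieck category with $\fp\A$ abelian and generating whose derived category is \emph{not} compactly generated: there one can resolve finitely presented objects by finitely presented objects (since $\fp\A$ is abelian), so your proposed mechanism is fully available at $\a=\aleph_0$, yet the conclusion fails. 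Consequently your diagnosis of where $\a>\aleph_0$ enters is also off. In the paper the uncountability is used elsewhere: one first passes to $\Mod\A^\a$, where $\bfD(\Mod\A^\a)$ is genuinely compactly generated and its $\a$-compacts are identified with $\bfD(\mod_\a\A^\a)$ via K-projective complexes (Proposition~\ref{pr:murfet}); one then realises $\bfD(\A)$ as the Verdier quotient by $\bfD_{\Eff_\a\A^\a}(\Mod\A^\a)$ using the $\a$-version of Auslander's formula (Proposition~\ref{pr:presentation} and Lemma~\ref{le:relative}); and the crucial point --- that this localising subcategory is generated by its $\a$-compact part --- is proved by writing its objects as homotopy colimits of countable sequences of $\a$-compacts, which remain $\a$-compact exactly because $\a>\aleph_0$. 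The localisation theory for well-generated categories (Propositions~\ref{pr:triangle-quotient} and~\ref{pr:perp}) then yields both parts. Note also that Neeman's recognition principle requires verifying the perfectness condition (condition (3) in the definition of $\a$-compact generation), which is not automatic for $\a>\aleph_0$ and which you do not address.

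Two further inaccuracies. In part (2) the left adjoint of $\bfK(\Inj\A)\to\bfK(\A)$ is \emph{not} the K-injective resolution functor: if it were, $\bfK(\Inj\A)$ would coincide with $\bfD(\A)$, which fails whenever the stable derived category $\bfS(\A)$ is nonzero; the paper constructs the adjoint as the composite $\bfK(\A)\rightarrowtail\bfK(\Mod\A^\a)\twoheadrightarrow\bfD(\Mod\A^\a)\twoheadrightarrow\bfK(\Inj\A)$. In part (1), fully faithfulness of $\bfD(\A^\a)\to\bfD(\A)$ on \emph{unbounded} complexes does not follow from an agreement of $\Ext$-groups between objects (itself only sketched); in the paper it is obtained from the identification $\A\simeq\Ind_\a\A^\a$ together with the localisation formalism above, not by a direct $\Ext$ comparison.
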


The case $\a=\aleph_0$ is Theorem~\ref{th:KInj} and
for $\a\neq\aleph_0$ see Theorems~\ref{th:derived} and
\ref{th:KInj2}.

Note that in case $\a=\aleph_0$ the derived category $\bfD(\A)$ need
not be compactly generated; an explicit example is given by Neeman
\cite {Ne2014}.

The above results are obtained by `resolving' the abelian category
$\A$.  More precisely, we use a variation of Auslander's formula
(Theorem~\ref{th:auslander}) to write $\A$ as the quotient of a
functor category modulo an appropriate subcategory of effaceable
functors (Corollary~\ref{co:abelian}). Then we `derive' this
presentation of $\A$ by passing to the derived category $\bfD(\A)$ and
to the homotopy category $\bfK(\Inj\A)$.  This passage from a
Grothendieck abelian category to a well-generated triangulated
category demonstrates the amazing parallel between both concepts
\cite{Kr2004}. Also, we see the relevance of the filtration
$\A=\bigcup_{\a}\A^\a$, which seems to be somewhat neglected in the
literature.

There are at least two aspects that motivate our work. The homotopy
category $\bfK(\Inj\A)$ played an import role in work with Benson and
Iyengar on modular representations of finite groups
\cite{BIK2011}. For instance, a classification of localising
subcategories of $\bfK(\Inj\A)$ amounts to a classification of
$\a$-localising subcategories of $\bfK(\Inj\A)^\a$ for a sufficiently
large cardinal $\a$. On the other hand, $\bfK(\Inj\A)$ has been used
to reformulate Grothendieck duality for noetherian schemes, and it
seems reasonable to wonder about the non-noetherian case; see
\cite{Ne2014} for details.

This paper has two parts. The first sections form the `finite' part,
dealing with finitely presented and compact objects. Cardinals greater
than $\aleph_0$ only appear in the last section, which includes a
gentle introduction to locally presentable abelian and well-generated
triangulated categories.

\section{Functor categories and Auslander's formula}

In this section we recall definitions and some basic facts about
functor categories. In particular, we recall Auslander's formula.

\subsection*{Localisation sequences}

We consider pairs of adjoint functors $(F,G)$
\[\begin{tikzcd}
\C \arrow[twoheadrightarrow,yshift=0.75ex]{rr}{F} &&\D  \arrow[tail,yshift=-0.75ex]{ll}{G}
\end{tikzcd}\]
satisfying the following equivalent conditions  \cite[I.1.3]{GZ}:
\begin{enumerate}
\item The functor $F$ induces an equivalence
\[\C[\Si^{-1}]\stackrel{\sim}\lto\D\]
where $\Si:=\{\s\in\Mor\C\mid F\s\text{ is invertible }\}$.
\item The functor $G$ is fully faithful. 
\item The morphism of functors $FG\to\Id_\D$ is invertible.
\end{enumerate}

\begin{definition}
  A \emph{localisation sequence} of abelian (triangulated) categories
  is a diagram of functors
\begin{equation}\label{eq:loc}
\begin{tikzcd}
\B \arrow[tail,yshift=0.75ex]{rr}{E} &&\C  \arrow[twoheadrightarrow,yshift=-0.75ex]{ll}{E'}
\arrow[twoheadrightarrow,yshift=0.75ex]{rr}{F} &&\D  \arrow[tail,yshift=-0.75ex]{ll}{F'}
\end{tikzcd}
\end{equation}
satisfying the following conditions:
\begin{enumerate}
\item  $E$ and $F$ are exact functors of  abelian (triangulated) categories.
\item The pairs $(E,E')$ and $(F,F')$ are adjoint pairs.
\item The functors $E$ and $F'$ are fully faithful.
\item An object in $\C$ is annihilated by $F$ iff it
  is in the essential image of $E$.
\end{enumerate}
\end{definition}
We refer to \cite{Ga1962,Ve1997} for basic properties, in particular
for the construction of the abelian (triangulated) quotient $\C/\B'$
such that $F$ induces an equivalence $\C/\B'\xto{\sim}\D$, where $\B'$
denotes the full subcategory of objects in $\C$ that are annihilated
by $F$. Thus any of the functors $E,E',F,F'$ determines the diagram
\eqref{eq:loc} up to equivalence.

An exact functor $F\colon \C\to\D$ of abelian (triangulated)
categories is by definition a \emph{quotient functor} if $F$ induces
an equivalence $\C/\B\xto{\sim}\D$, where $\B$ denotes the full
subcategory of objects in $\C$ that are annihilated by $F$.

\subsection*{Finitely presented functors}

Let $\C$ be an additive category. We denote by $\mod\C$ the
category of finitely presented functors $F\colon\C^\op\to\Ab$. Recall
that $F$ is \emph{finitely presented} (or \emph{coherent}) if it fits
into an exact sequence
\begin{equation}\label{eq:pres}
\Hom_\C(-,X)\lto \Hom_\C(-,Y)\lto F\lto 0.
\end{equation}
The \emph{Yoneda
functor}  is the fully faithful functor
\[\C\lto\mod\C,\quad X\mapsto\Hom_\C(-,X).\]

\subsection*{Additive functors}

Let $\C$ be an (essentially) small additive category. A
\emph{$\C$-module} is an additive functor $\C^\op\to\Ab$.  For the
category of $\C$-modules we write
\begin{align*}
\Mod\C:=&\Add(\C^\op,\Ab)\\
\intertext{and consider the following full
subcategories:}
\Proj\C:=&\text{ projective $\C$-modules}\\
\Inj\C:=&\text{ injective $\C$-modules}\\
\Flat\C:=&\text{ flat $\C$-modules}
\end{align*}
The flat $\C$-modules are precisely the filtered colimits of
representable functors. Thus we may identify
\[\Flat \C=\Ind\C\]
where $\Ind\C$ denotes the category of ind-objects in the sense of
\cite[\S 8]{Grothendieck/Verdier:1972a}. When $\C$ admits cokernels then
\[\Flat \C=\Lex(\C^\op,\Ab)\]
where $\Lex(\C^\op,\Ab)$ denotes the category of left exact functors
$\C^\op\to\Ab$.

Note that the inclusion $\mod\C\to\Mod\C$ induces an
equivalence\[\Ind\mod\C\stackrel{\sim}\lto\Mod\C.\]

\subsection*{Effaceable functors}
Let $\C$ be an abelian category.  Then we write $\eff\C$ for the
full subcategory of functors $F$ in $\mod\C$ that admit a presentation
\eqref{eq:pres} with $X\to Y$ an epimorphism in $\C$. If $\C$ is small
then the inclusion $\eff\C\to\Mod\C$ induces a fully faithful
functor
 \[\Eff\C:=\Ind\eff\C\lto\Mod\C.\] 
This identifies $\Eff\C$ with the functors in $\Mod\C$ that are
 effaceable in the sense of \cite[p.~141]{Gr1957}.

\subsection*{Auslander's formula}

The following result is somewhat hidden in Auslander's account on
coherent functors.

\begin{theorem}[{\cite[p.~205]{Au1966}}]\label{th:auslander}
Let $\C$ be an abelian category. Then the Yoneda
  functor $\C\to\mod\C$ induces a localisation sequence of abelian categories.
\begin{equation}\label{eq:aus}
\begin{tikzcd}
  \eff\C \arrow[tail,yshift=0.75ex]{rr} && \mod\C
  \arrow[twoheadrightarrow,yshift=-0.75ex]{ll}
  \arrow[twoheadrightarrow,yshift=0.75ex]{rr} &&\C
  \arrow[tail,yshift=-0.75ex]{ll}
\end{tikzcd}
\end{equation}
Moreover, the functor $\mod\C\to\C$ induces an equivalence
\[\frac{\mod\C}{\eff\C}\stackrel{\sim}\lto \C.\]
\end{theorem}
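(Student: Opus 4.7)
The plan is to construct the four functors of the localisation sequence explicitly and verify the four axioms. The key actors are the Yoneda embedding, which will play the role of $F'\colon\C\to\mod\C$, and the cokernel functor $F\colon\mod\C\to\C$ defined by sending a functor $M$ with presentation $\Hom(-,X)\xto{\phi}\Hom(-,Y)\to M\to 0$ to $\Coker\phi\in\C$, which is well defined up to canonical isomorphism. First I would verify that $F$ is left adjoint to Yoneda: applying $\Hom_{\mod\C}(-,h_Z)$ with $h_Z:=\Hom_\C(-,Z)$ to the presentation of $M$ turns the adjunction isomorphism $\Hom_{\mod\C}(M,h_Z)\cong\Hom_\C(F(M),Z)$ into a straightforward consequence of the Yoneda lemma. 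Since Yoneda is fully faithful, the counit $FF'\to\Id_\C$ is invertible; equivalently $F(h_X)=X$ for $X\in\C$.

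Next I would check that $F$ is exact. Right exactness is automatic as $F$ is a left adjoint. For left exactness I would use the standard horseshoe-type lifting: any short exact sequence $0\to M'\to M\to M''\to 0$ in $\mod\C$ can be covered by a compatible diagram of projective presentations $h_{X'}\to h_{Y'}\to M'$, $h_X\to h_Y\to M$, $h_{X''}\to h_{Y''}\to M''$ whose rows of $X$'s and $Y$'s fit into split short exact sequences in $\C$, so applying $F$ to the bottom row yields the cokernel sequence $0\to F(M')\to F(M)\to F(M'')\to 0$ by the snake lemma. The vanishing $F(M)=0$ then amounts to $X\to Y$ being epi in $\C$, giving exactly $\Ker F=\eff\C$.

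The main step, and where I expect most of the work to be, is producing the right adjoint $E'$ of the inclusion $E\colon\eff\C\to\mod\C$. For $M\in\mod\C$ with presentation $h_X\xto{\phi}h_Y\to M\to 0$ define $E'(M)$ to be the kernel of the unit map $M\to F'F(M)=h_{F(M)}$. Factoring $\phi$ in $\C$ as $X\twoheadrightarrow Y'\hookrightarrow Y$ (so that $F(M)=Y/Y'$) and chasing the kernel of $h_Y\to h_{Y/Y'}$, which equals $h_{Y'}$ because representables are left exact, one sees that $E'(M)$ fits into an exact sequence $h_X\to h_{Y'}\to E'(M)\to 0$ with $X\twoheadrightarrow Y'$ epi; hence $E'(M)\in\eff\C$. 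To verify the adjunction, one checks that $\Hom_{\mod\C}(N,h_Z)=0$ for every $N\in\eff\C$ and every $Z\in\C$: a map $N\to h_Z$ is, via a presentation $h_A\twoheadrightarrow h_B\to N\to 0$ with $A\twoheadrightarrow B$ epi, the data of a morphism $g\colon B\to Z$ with $g$ restricted to $A$ zero, which by surjectivity of $A\to B$ forces $g=0$. Consequently, any morphism $N\to M$ with $N\in\eff\C$ composes to $0$ with $M\to h_{F(M)}$ and thus factors uniquely through $E'(M)$.

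Finally, the four axioms of a localisation sequence are in place: $E$ and $F$ are exact, the adjunctions $(E,E')$ and $(F,F')$ have been constructed, $E$ and $F'$ (Yoneda) are fully faithful by inspection, and $\Ker F=\eff\C$. Standard localisation theory \cite{Ga1962} then yields the induced equivalence $\mod\C/\eff\C\xto{\sim}\C$. The main obstacle is really the careful construction of $E'$ and, alongside it, the verification that $\eff\C$ is closed under subobjects, quotients and extensions (which is implicit in the existence of the localisation and can be read off the diagram chases above).
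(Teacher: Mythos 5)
Your proposal is correct and follows essentially the same route as the paper: the left adjoint of the Yoneda functor is the cokernel functor $M\mapsto\Coker(X\to Y)$, its exactness is checked via the horseshoe lemma, and its kernel is identified with $\eff\C$. You go somewhat further than the paper's three-line proof by explicitly constructing the right adjoint $E'(M)=\Ker\bigl(M\to h_{F(M)}\bigr)$ of the inclusion $\eff\C\to\mod\C$ and verifying all four axioms, points the paper leaves to the general localisation theory of Gabriel.
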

\begin{proof}
  The left adjoint of the Yoneda functor is the unique functor
  $\mod\C\to\C$ that preserves finite colimits and sends each
  representable functor $\Hom_\C(-,X)$ to $X$. Thus a functor $F$ with
  presentation \eqref{eq:pres} is sent to the cokernel of the
  representing morphism $X\to Y$. The exactness of the left adjoint
  follows from a simple application of the horseshoe lemma.
\end{proof}

Following Lenzing \cite{Le1998} we call this presentation of an
abelian category (via the left adjoint of the Yoneda functor)
\emph{Auslander's formula}.

A predecessor of this result for Grothendieck abelian categories is
due to Gabriel.

\begin{theorem}[{\cite[II.2]{Ga1962}}]\label{th:coh}
  Let $\C$ be a small abelian category. Then the inclusion
  $\Ind\C\to\Mod\C$ induces a localisation sequence of abelian
  categories extending \eqref{eq:aus}.
\begin{equation}\label{eq:aus2}
\begin{tikzcd}  
\Eff\C \arrow[tail,yshift=0.75ex]{rr} && \Mod\C
  \arrow[twoheadrightarrow,yshift=-0.75ex]{ll}
  \arrow[twoheadrightarrow,yshift=0.75ex]{rr} &&\Ind\C
  \arrow[tail,yshift=-0.75ex]{ll}
\end{tikzcd}
\end{equation} 
Moreover, the functor $\Mod\C\to\Ind\C$ induces an equivalence
\[\frac{\Mod\C}{\Eff\C}\stackrel{\sim}\lto \Ind\C.\]
\end{theorem}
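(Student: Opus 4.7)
The plan is to derive Theorem~\ref{th:coh} from Auslander's formula (Theorem~\ref{th:auslander}) by applying the $\Ind$-construction termwise to the localisation sequence \eqref{eq:aus}. Since by definition $\Mod\C=\Ind\mod\C$ and $\Eff\C=\Ind\eff\C$, it suffices to extend each functor in \eqref{eq:aus} along filtered colimits and verify that the resulting diagram is again a localisation sequence.

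First I would extend the adjoint pairs. Writing $F\colon\mod\C\to\C$ for the cokernel functor of Theorem~\ref{th:auslander} and $F'\colon\C\to\mod\C$ for its right adjoint (the Yoneda functor), extend $F$ to a filtered-colimit-preserving functor $L\colon\Mod\C\to\Ind\C$ via $\colim M_i\mapsto\colim F(M_i)$, and extend $F'$ to the fully faithful inclusion $R\colon\Ind\C\hookrightarrow\Mod\C$. The computation
\[
\Hom_{\Ind\C}\bigl(L(\colim M_i),X\bigr)=\lim\Hom_\C(F(M_i),X)=\lim\Hom_{\mod\C}(M_i,F'X)=\Hom_{\Mod\C}(\colim M_i,RX)
\]
shows $(L,R)$ is an adjoint pair with $R$ fully faithful. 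The pair $(E,E')$ of \eqref{eq:aus} extends analogously to a left adjoint of the inclusion $\Eff\C\hookrightarrow\Mod\C$.

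Next I would verify that $L$ is exact. Right exactness is automatic, as $L$ is a left adjoint. For left exactness, given a monomorphism $M'\hookrightarrow M$ in $\Mod\C$, express both objects as filtered colimits of their finitely presented subobjects; any finitely presented subobject of $M'$ maps monomorphically into some finitely presented subobject of $M$ (kernels in $\mod\C$ and finite-presentation factorisation), producing a filtered system of monomorphisms in $\mod\C$ whose colimit recovers $M'\hookrightarrow M$. Applying the exact functor $F$ termwise and passing to the filtered colimit in the Grothendieck category $\Ind\C$---where filtered colimits preserve monomorphisms---yields that $L(M')\to L(M)$ is a monomorphism.

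With $L$ exact, identifying $\ker L=\Eff\C$ becomes straightforward: the inclusion $\Ind\eff\C\subseteq\ker L$ holds because $L$ preserves filtered colimits and vanishes on $\eff\C=\ker F$; conversely, if $L(M)=0$ then any finitely presented subobject $N\subseteq M$ satisfies $L(N)=F(N)=0$ by exactness, so $N\in\eff\C$ and hence $M\in\Ind\eff\C$. The quotient equivalence $\Mod\C/\Eff\C\xrightarrow{\sim}\Ind\C$ then follows from general Gabriel localisation theory applied to the exact quotient functor $L$. The principal obstacle will be the exactness of $L$: this is the one step where formal manipulations with adjoints and Ind-completions do not suffice, and one must genuinely use exactness of $F$ together with exactness of filtered colimits in the ambient Grothendieck categories.
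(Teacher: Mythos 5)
Your overall strategy coincides with the paper's: extend the left adjoint of the Yoneda functor to a filtered-colimit-preserving functor $L\colon\Mod\C=\Ind\mod\C\to\Ind\C$ and reduce its exactness to the exactness of $\mod\C\to\C$ together with the exactness of filtered colimits. You have also correctly isolated the crux. But your argument for that crux contains a genuine gap: you assert that every object of $\Mod\C$ is the directed union of its finitely presented subobjects. This holds when $\Mod\C$ is locally noetherian (equivalently, when every object of $\mod\C$ is noetherian), but not for an arbitrary small abelian $\C$. If some representable functor $\Hom_\C(-,X)$ has a subfunctor $K$ that is not finitely generated, then $M=\Hom_\C(-,X)/K$ is finitely generated but not finitely presented; since $M$ is finitely generated, any directed family of subobjects whose union is $M$ must contain $M$ itself, so if $M$ were the directed union of its finitely presented subobjects it would be finitely presented --- a contradiction. (The finitely presented subobjects of an arbitrary object need not even form a directed set, since the sum $N_1+N_2\cong(N_1\oplus N_2)/(N_1\cap N_2)$ is finitely presented only when $N_1\cap N_2$ is finitely generated.) The theorem is precisely meant to cover the non-noetherian situation --- the paper emphasises that it generalises the locally noetherian case of \cite{Kr2005} --- so this cannot be assumed. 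The same unjustified claim reappears in your identification of $\Ker L$ with $\Eff\C$: from $L(N)=0$ for every finitely presented subobject $N\subseteq M$ you cannot conclude $M\in\Ind\eff\C$ unless those subobjects cover $M$.

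The repair is the device used in the paper's proof (see its footnote): present the \emph{morphism}, not the two objects separately. Since the morphism category of $\Ind\D$ is $\Ind$ of the morphism category of $\D$, a monomorphism $\a\colon M'\to M$ (or an exact sequence) in $\Mod\C$ can be written as a filtered colimit of arbitrary morphisms $\a_i\colon X_i\to Y_i$ in $\mod\C$; one then replaces each $X_i$ by $\Image\a_i=\Ker(Y_i\to\Coker\a_i)$, which lies in $\mod\C$ because $\mod\C$ is abelian, and uses exactness of filtered colimits to see that the colimit of the resulting monomorphisms is again $\a$. With that substitution your exactness argument goes through, and an analogous image-factorisation argument (applied to a filtered colimit presentation $M=\colim M_i$ with $M_i\in\mod\C$) repairs the identification of $\Ker L$ with $\Eff\C$. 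The remaining parts of your proposal --- the adjunction computation, full faithfulness of the extension of the Yoneda functor, and the appeal to Gabriel's localisation theory --- are fine and match the paper's (more compressed) treatment.
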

\begin{proof}
  The left adjoint of the inclusion functor is the unique functor
  $\Mod\C=\Ind\mod\C\to\Ind\C$ that preserves filtered colimits and
  extends the functor $\mod\C\to\C$ from
  Theorem~\ref{th:auslander}. Any exact sequence in $\Mod\C$ can be
  written as a filtered colimit of exact sequences in
  $\mod\C$.\footnote{Let $\eta\colon 0\to X\xto{\a} Y\xto{\b} Z\to 0$
    be exact. Write $\a$ as filtered colimit of morphisms $\a_i\colon
    X_i\to Y_i$ in $\mod\C$. Each $\a_i$ induces an epimorphism
    $\b_i\colon Y_i\to \Coker\a_i$. Then $\eta$ is the filtered colimit
    of the exact sequences $0\to\Ker\b_i\to Y_i\to\Coker\a_i\to 0$.}
  Thus the exactness of the functor $\mod\C\to\C$ yields the exactness
  of $\Mod\C\to\Ind\C$.
\end{proof}

Note that for any small abelian category $\C$ the category $\Ind\C$ is
a Grothendieck abelian category \cite[II.3]{Ga1962}. Thus all categories
occuring in diagram \eqref{eq:aus2} are Grothendieck abelian.  

A Grothendieck abelian category $\A$ has injective envelopes and we
denote by $\Inj\A$ the full subcategory of injective objects. The
diagram \eqref{eq:aus2} induces a sequence of functors\footnote{The
  notation $\Inj\C$ is ambiguous: We mean the category of injective
  $\C$-modules, and not the category of injective objects in $\C$.}
$\Inj\Ind\C\to\Inj\C\to\Inj\Eff\C$ since a right adjoint of an exact
functor preserves injectivity.

\section{Derived categories}

In this section we describe a derived version of Auslander's formula
for the bounded derived category of an abelian category.

Let us recall some notation. For an additive category $\C$ let
$\bfK(\C)$ denote the homotopy category of cochain complexes in $\C$.
The objects of $\bfK(\C)$ are cochain complexes and the morphisms are
homotopy classes of chain maps.  When $\C$ is abelian, then
$\bfAc(\C)$ denotes the full subcategory of acyclic complexes and the
derived category $\bfD(\C)$ is by definition the triangulated quotient
$\bfK(\C)/\bfAc(\C)$. The superscript ${}^b$ refers to the full
subcategory of cochain complexes $X$ satisfying $X^n=0$ for $|n|\gg
0$.

For a triangulated category $\T$ and a class of objects $\Sc$ in $\T$
let $\Thick(\Sc)$ denote the smallest thick subcategory of $\T$
containing $\Sc$.

\begin{lemma}\label{le:derived}
  Let $\C$ be an abelian category.  Then the Yoneda functor
  $\C\to\mod\C$ induces a triangle equivalence
  $\bfK^b(\C)\xto{\sim}\bfD^b(\mod\C)$ that makes the following
  diagram commutative.
\[\begin{tikzcd}[row sep=scriptsize]
  \bfAc^b(\C) \arrow[tail]{rr}\arrow{d}{\wr}&& \bfK^b(\C)
  \arrow{d}{\wr} \arrow[twoheadrightarrow]{rr}&&
  \bfD^b(\C) \arrow[equal]{d}\\
  \Thick(\eff\C) \arrow[tail]{rr}&&\bfD^b(\mod\C)
  \arrow[twoheadrightarrow]{rr}&&\bfD^b(\C)
\end{tikzcd}\]
In particular, the triangulated category $\bfAc^b(\C)$ is generated by the acyclic
  complexes of the form \[\cdots\to 0\to X^{n-1}\to X^{n} \to X^{n+1} \to
  0\to\cdots.\]
\end{lemma}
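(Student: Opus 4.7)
The plan is to analyze the composite $\Phi \colon \bfK^b(\C) \to \bfK^b(\mod\C) \to \bfD^b(\mod\C)$ induced by the Yoneda functor $Y \colon \C \to \mod\C$, exploiting two key features: $Y$ lands in the projective objects of $\mod\C$ (representables are projective), and its exact left adjoint $F \colon \mod\C \to \C$ from Auslander's formula (Theorem~\ref{th:auslander}) satisfies $FY \cong \Id_\C$. A crucial preliminary is that $\mod\C$ has global dimension at most two: given $M \in \mod\C$ with presentation $Y(X_1) \xto{Y(f)} Y(X_0) \to M \to 0$, left exactness of $Y$ gives $\Ker(Y(f)) = Y(\Ker f)$, which is again representable. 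Thus $M$ admits a length-three resolution by representables.

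For full faithfulness of $\Phi$ I would observe that $Y(X^\bullet)$ is a bounded complex of projectives, so morphisms out of it in $\bfD^b(\mod\C)$ agree with those in $\bfK^b(\mod\C)$, which in turn agree with morphisms in $\bfK^b(\C)$ by termwise full faithfulness of $Y$. For essential surjectivity I would use the bounded global dimension: a Cartan--Eilenberg / horseshoe construction assembles the length-three representable resolutions of the terms of a bounded complex $M^\bullet$ in $\mod\C$ into a bounded double complex whose total complex is a bounded complex of representables quasi-isomorphic to $M^\bullet$. Since $Y$ is fully faithful, this lifts uniquely to a bounded complex in $\C$.

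Commutativity of the right square follows from exactness of $F$ together with $FY = \Id_\C$: the derived functor $\bfD^b(F)$ composed with $\Phi$ recovers the canonical localisation $\bfK^b(\C) \to \bfD^b(\C)$. The left square is then formal once one identifies $\bfAc^b(\C)$ and $\Thick(\eff\C)$ as the kernels of the respective quotient functors. The kernel of $\bfD^b(F)$ consists of complexes whose cohomology lies in $\Ker F = \eff\C$; since $\eff\C$ is a Serre subcategory of $\mod\C$, any such complex is built from its cohomology via iterated truncation triangles and hence lies in $\Thick(\eff\C)$, while the reverse containment is immediate.

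For the ``In particular'' statement, a short acyclic complex $0 \to A \to B \to C \to 0$ maps under $\Phi$ to $[Y(A) \to Y(B) \to Y(C)]$, whose only nonvanishing cohomology is the effaceable functor $\Coker(Y(B) \to Y(C))$ (lower positions vanish by left exactness of $Y$). Conversely, every effaceable functor arises this way from an epimorphism together with its kernel, so under the equivalence the short acyclic three-term complexes correspond, up to shift, to a set of generators of $\Thick(\eff\C)$, and hence generate $\bfAc^b(\C)$. I expect the Cartan--Eilenberg construction in the essential surjectivity step to be the main technical hurdle; everything else is a formal consequence of the adjunction $(F,Y)$ and Auslander's formula.
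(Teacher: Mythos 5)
Your proposal is correct and takes essentially the same route as the paper: both rest on the length-three representable resolution of any $F\in\mod\C$ (so that $\bfK^b(\C)\simeq\bfD^b(\mod\C)$ via the identification of $\C$ with the projectives of $\mod\C$), identify $\Thick(\eff\C)$ as the complexes with cohomology in $\Ker(\mod\C\to\C)=\eff\C$, and match three-term acyclic complexes with objects of $\eff\C$ concentrated in a single degree. The only difference is that you spell out the Cartan--Eilenberg argument for essential surjectivity, which the paper treats as standard.
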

\begin{proof}
  Each object $F$ in $\mod\C$ admits a finite projective resolution
\begin{equation}\label{eq:res}
0\to\Hom_\C(-,X)\to \Hom_\C(-,Y)\to
  \Hom_\C(-,Z)\to F\to 0.
\end{equation}
Thus we have a triangle equivalence
$\bfK^b(\C)\xto{\sim}\bfD^b(\mod\C)$ because the Yoneda functor
identifies $\C$ with the full subcategory of projective objects in
$\mod\C$.

For $X$ in $\bfK^b(\C)$ let $Y$ denote the corresponding complex in
$\bfD^b(\mod\C)$ and observe that $Y$ belongs to
$\Thick(\{H^i(Y)\mid i\in\bbZ \})$.  The kernel of $\mod\C\to\C$
equals $\eff\C$ by Theorem~\ref{th:auslander}. Thus $Y$ is in
$\Thick(\eff\C)$ iff $Y$ is annihilated by
$\bfD^b(\mod\C) \to\bfD^b(\C)$ iff $X$ is in $\bfAc^b(\C)$. This
yields the triangle equivalence $\bfAc^b(\C)\xto{\sim}\Thick(\eff\C)$.

For the final assertion of the lemma, observe that the equivalence
$\bfK^b(\C)\xto{\sim}\bfD^b(\mod\C)$ identifies the complexes of the
form
 \[\cdots\to 0\to X^{n-1}\to X^{n} \to X^{n+1} \to
 0\to\cdots\] with the objects in $\eff\C$ viewed as complexes
 concentrated in degree $n+1$.
\end{proof}

\begin{corollary}\label{co:derived}
  \pushQED{\qed} The canonical functor $\bfD^b(\mod\C)\to\bfD^b(\C)$
  induces an equivalence
\[\frac{\bfD^b(\mod\C)}{\Thick(\eff\C)}\stackrel{\sim}\lto\bfD^b(\C).\qedhere\]
\end{corollary}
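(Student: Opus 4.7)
The plan is to obtain the equivalence as a formal consequence of Lemma \ref{le:derived}, by exploiting the commutative diagram displayed there. The key observation is that the top row
\[\bfAc^b(\C) \hookrightarrow \bfK^b(\C) \twoheadrightarrow \bfD^b(\C)\]
realises $\bfD^b(\C)$ as the Verdier quotient $\bfK^b(\C)/\bfAc^b(\C)$: this is essentially the definition of the bounded derived category, the point being that $\bfAc^b(\C)$ is a thick subcategory of $\bfK^b(\C)$ and that a bounded complex is acyclic precisely when its image in $\bfD^b(\C)$ vanishes.

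Next I would invoke the two vertical equivalences from Lemma \ref{le:derived}, which identify $\bfAc^b(\C)$ with $\Thick(\eff\C)$ and $\bfK^b(\C)$ with $\bfD^b(\mod\C)$, compatibly with the horizontal functors to $\bfD^b(\C)$. Since a Verdier localisation sequence is preserved under triangle equivalence, this transfers the localisation property to the bottom row
\[\Thick(\eff\C) \hookrightarrow \bfD^b(\mod\C) \to \bfD^b(\C).\]
The universal property of the Verdier quotient then delivers the desired equivalence
\[\frac{\bfD^b(\mod\C)}{\Thick(\eff\C)}\stackrel{\sim}\lto\bfD^b(\C).\]

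There is essentially no obstacle, because all the substantive content has been absorbed into Lemma \ref{le:derived}, which (i) identifies $\bfK^b(\C)$ with $\bfD^b(\mod\C)$ via the Yoneda functor and (ii) computes the kernel of $\bfD^b(\mod\C)\to\bfD^b(\C)$ as $\Thick(\eff\C)$. The only minor point worth checking is the compatibility of the two horizontal quotient functors with the vertical equivalences, but this is built into the construction of the diagram: the right-hand vertical arrow is the identity on $\bfD^b(\C)$, and both quotient functors are induced by the canonical passage from bounded complexes over $\C$ to the bounded derived category of $\C$, transported along Yoneda in the bottom row.
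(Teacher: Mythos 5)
Your proposal is correct and matches the paper's (implicit) argument: the corollary is stated with an immediate \qedhere precisely because it follows formally from the commutative diagram of Lemma~\ref{le:derived}, with the top row being the defining Verdier localisation $\bfK^b(\C)/\bfAc^b(\C)\simeq\bfD^b(\C)$ and the vertical equivalences transporting it to the bottom row. Nothing further is needed.
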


Observe that $\Thick(\eff\C)$ identifies with the full subcategory of
complexes $X$ in $\bfD^b(\mod\C)$ such that $H^n(X)$ belongs to $\eff\C$
for all $n\in\bbZ$; see also Lemma~\ref{le:relative}.

I am grateful to Xiao-Wu Chen for pointing out the following.

\begin{remark}
  The inclusion $\eff\C\to\mod\C$ induces a functor
  $\bfD^b(\eff\C)\to\bfD^b(\mod\C)$ which is not fully faithful in
  general.  Examples arise by taking for $\C$ the category $\mod A$ of
  finite dimensional modules over a finite dimensional $k$-algebra
  $A$, where $k$ is any field. Then $\eff\C$ identifies with
  $\mod(\umod A)$, where $\umod A$ denotes the stable category modulo
  projectives \cite[\S 6]{AR1974}.
\end{remark}

\section{Homotopy categories of injectives}

In this section we discuss a derived version of Auslander's formula
for complexes of injective objects. More precisely, we extend the
presentation of a small abelian category $\C$ via Auslander's formula
to the homotopy category of injective objects of $\Ind\C$.

\subsection*{Homotopically minimal complexes}

Let $\A$ be a Grothendieck abelian category. Recall from
\cite[Proposition~B.2]{Kr2005} that every complex $I$ in $\A$ with
injective components admits a decomposition $I = I'\amalg I''$ such
that $I'$ is homotopically minimal and $I''$ is null homotopic. Here,
a complex $J$ is \emph{homotopically minimal} if for all $n$ the
inclusion $\Ker d_J^n\to J^n$ is an injective envelope.

Recall that a full subcategory $\B\subseteq\A$ is \emph{localising} if
$\B$ is closed under forming subobjects, quotients, extensions, and
coproducts.

\begin{lemma}\label{le:env}
  Let $\A$ be a Grothendieck abelian category and $\B$ a localising
  subcateory.  Write $t\colon\A\to\B$ for the right adjoint of the
  inclusion. If a complex $I$ with injective components in $\A$ is
  homotopically minimal, then $tI$ is homotopically minimal in $\B$.
\end{lemma}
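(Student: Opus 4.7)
The plan is to reduce the statement to the following general fact: if $K\subseteq J$ is an injective envelope in $\A$ with $J$ injective, then $tK\subseteq tJ$ is an injective envelope in $\B$. Applying this componentwise to $K=\Ker d_I^n$ and $J=I^n$ then yields homotopic minimality of $tI$.

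For the preliminary identifications, note that since the inclusion $\B\hookrightarrow\A$ is exact, its right adjoint $t$ preserves injective objects, so each $(tI)^n=t(I^n)$ is injective in $\B$. Being a right adjoint, $t$ is also left exact, hence $\Ker d_{tI}^n=t(\Ker d_I^n)$. Moreover, the counit $\e_A\colon tA\to A$ is a monomorphism for every $A\in\A$: its kernel is a subobject of $tA\in\B$ and thus lies in $\B$ by closure under subobjects, and it maps to zero in $A$, so by the adjunction it must itself be zero. Consequently $tK$ and $tJ$ are genuine subobjects of $K$ and $J$, and $tK$ is the largest $\B$-subobject of $K$.

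The heart of the argument is essentiality of $tK\hookrightarrow tJ$ in $\B$. Let $U\subseteq tJ$ be a nonzero subobject in $\B$. Via the monic counit $tJ\hookrightarrow J$ we regard $U$ as a nonzero subobject of $J$ in $\A$, and essentiality of $K\subseteq J$ yields $U\cap K\neq 0$ in $\A$. Since $U\cap K$ is a subobject of $U\in\B$, closure of $\B$ under subobjects gives $U\cap K\in\B$; combined with $U\cap K\subseteq K$, this places $U\cap K$ inside $tK$. Hence $U\cap tK\supseteq U\cap K\neq 0$, which is exactly the essentiality condition in $\B$.

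The only real obstacle is the bookkeeping around the counit being monic and intersections being computed consistently in $\A$ and $\B$; both follow from closure of $\B$ under subobjects together with the fact that $t$, as a right adjoint, preserves pullbacks.
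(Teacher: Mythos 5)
Your proof is correct and follows essentially the same route as the paper: identify $\Ker d_{tI}^n = t(\Ker d_I^n)$ via left exactness of $t$, then invoke that $t$ sends injective envelopes in $\A$ to injective envelopes in $\B$. The only difference is that the paper cites this last fact without proof, whereas you supply the (correct) argument via the monic counit and the description of $tK$ as the largest subobject of $K$ lying in $\B$.
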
 
\begin{proof}
  Observe that $\Ker d_{tI}^n= t(\Ker d_I^n)$ since $t$ is left
  exact. Now use that $t$ takes injective envelopes in $\A$ to 
 injective envelopes in $\B$.
\end{proof}

\subsection*{Pure acyclic complexes}

Let $\A$ be a Grothendieck abelian category and fix a class $\C$ of
objects that generates $\A$ and is closed under finite
colimits. Throughout we identify objects in $\A$ with complexes
concentrated in degeee zero.

The following is a slight generalisation of a result due to
\v{S}\v{t}ov\'i\v{c}ek \cite{St2011}.

\begin{proposition}\label{pr:pac}
  Let $I$ be a complex of injective objects in $\A$ and suppose that
  $\Hom_{\bfK(\A)}(X,I[n])=0$ for all $X\in\C$ and $n\in\bbZ$. Then
  $I$ is null homotopic.
\end{proposition}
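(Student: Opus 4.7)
The plan is to adapt the argument of \v{S}\v{t}ov\'i\v{c}ek \cite{St2011}. First, I would invoke \cite[Proposition~B.2]{Kr2005} to decompose $I=I'\amalg I''$ with $I'$ homotopically minimal and $I''$ null homotopic; since $\Hom_{\bfK(\A)}(X,-)$ is additive, the hypothesis transfers to $I'$, and the task reduces to showing $I'=0$.

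Unpacking the hypothesis: for $X\in\C$ viewed in degree zero, a chain map $X\to I'[n]$ is a morphism $f\colon X\to Z^n$ with $Z^n:=\Ker d_{I'}^n$, and a null homotopy amounts to a factorisation $f=d^{n-1}\circ h$ through $d^{n-1}\colon I'^{n-1}\to Z^n$. Hence the hypothesis says every morphism $X\to Z^n$ with $X\in\C$ lifts along $d^{n-1}$. Picking an epimorphism $\coprod_i X_i\twoheadrightarrow Z^n$ with $X_i\in\C$ (possible since $\C$ generates $\A$) and lifting component-wise, one sees that $d^{n-1}\colon I'^{n-1}\to Z^n$ must itself be an epimorphism. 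Therefore $I'$ is acyclic.

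The key step is to extend the vanishing of the Hom complex $\Hom^\bullet_\A(X,I')$ from $X\in\C$ to every $X\in\A$. Let $\C'$ denote the class of $A\in\A$ for which $\Hom^\bullet_\A(A,I')$ is an acyclic complex of abelian groups. Because each $I'^n$ is injective, the functor $\Hom^\bullet_\A(-,I')$ turns short exact sequences in $\A$ into short exact sequences of complexes; thus $\C'$ is closed under extensions and satisfies 2-out-of-3 in short exact sequences. It is also closed under arbitrary coproducts, since products in $\Ab$ are exact, and by assumption $\C\subseteq\C'$. Using a Hill-lemma / deconstructibility argument for a small generating subset of $\C$ inside the Grothendieck category $\A$, every object of $\A$ can be realised as a transfinite extension of objects from $\C$, so $\C'=\A$.

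Finally, with $\Hom^\bullet_\A(A,I')$ acyclic for every $A\in\A$, the long exact $\Ext$-sequence attached to the short exact sequence $0\to Z^n\to I'^n\to Z^{n+1}\to 0$ (available by the acyclicity of $I'$) gives $\Ext^1_\A(A,Z^n)=0$ for all $A$, so each $Z^n$ is injective. Minimality of $I'$ then forces the inclusion $Z^n\hookrightarrow I'^n$, being an injective envelope, to be an isomorphism; consequently $d^n=0$ for every $n$, and acyclicity then yields $Z^{n+1}=\Image d^n=0$, hence $I'^n=0$ for all $n$. This gives $I'=0$, so $I$ is null homotopic. The main obstacle is the extension step: promoting the vanishing from $\C$ to all of $\A$ relies on a Hill-lemma-style transfinite-extension result in Grothendieck abelian categories, and the closure of $\C$ under finite colimits is what lets one extract from $\C$ the small, well-behaved generating input for that argument.
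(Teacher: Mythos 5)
Your opening and closing steps are sound: passing to a homotopically minimal summand, unpacking $\Hom_{\bfK(\A)}(X,I'[n])$ as maps into $Z^n=\Ker d^n$ that must lift along $d^{n-1}$, deducing acyclicity of $I'$ from the fact that $\C$ generates, and the endgame ($\Ext^1_\A(-,Z^n)=0$ forces $Z^n$ injective, whence contractibility) are all correct. The problem is the central step, where you promote the vanishing of $\HOM_\A(-,I')$ from $\C$ to all of $\A$ by asserting that every object of $\A$ is a transfinite extension of objects of $\C$. That assertion is not a consequence of $\C$ being generating and closed under finite colimits, and it is false in general. Take $\A=\Mod R$ for a coherent non-noetherian ring $R$ and $\C=\mod R$: if every module were a transfinite extension of finitely presented modules, Eklof's lemma would force every FP-injective module to be injective, which characterises noetherian rings. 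Since this is exactly the setting of \v{S}\v{t}ov\'i\v{c}ek's theorem that the proposition is meant to generalise, the gap cannot be waved away. A secondary issue: even granting such filtrations, closure of your class $\C'$ under extensions and coproducts does not give closure at limit ordinals, because $\HOM_\A(-,I')$ turns the union into an inverse limit and acyclicity is not preserved by inverse limits; one genuinely needs Eklof's lemma here, not just exactness of products in $\Ab$.

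The paper's argument avoids the false filtration claim by working with the orthogonal class $^\perp\Y$ itself (your $\C'$) and exploiting that it satisfies two-out-of-three for \emph{all} short exact sequences of complexes --- in particular the ``passing to kernels'' direction --- together with closure under directed unions of subobjects (Lemma~\ref{le:pac}, via Eklof). Concretely: for $X\in\C$ and $U\subseteq X$, one writes $U=\bigcup U_i$ with each $U_i$ the image of a map from $\C$; then $X/U_i\in\C\subseteq{}^\perp\Y$ by closure under cokernels, so $U_i\in{}^\perp\Y$ by two-out-of-three, so $U\in{}^\perp\Y$ by directed unions, so $X/U\in{}^\perp\Y$, and finally every object of $\A$ is a directed union of such $X/U$. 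The intermediate objects $U_i$, $U$, $X/U$ need not lie in $\C$ nor be $\C$-filtered; they only need to lie in the orthogonal class. To repair your proof, replace the transfinite-extension claim by this two-out-of-three plus directed-union bootstrap.
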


The proof is based on the following lemma, where $\bfC(\A)$ denotes
the abelian category of cochain complexes in $\A$.

\begin{lemma}[\cite{St2011}]\label{le:pac}
  Let $\Y\subseteq \bfK(\Inj\A)$ be a class of objects satisfying
  $\Y=\Y[1]$. Then \[^\perp \Y:=\{X\in\bfK(\A)\mid
  \Hom_{\bfK(\A)}(X,Y)=0\text{ for all }Y\in\Y\}\] has the following
  properties.
\begin{enumerate}
\item Let $0\to X'\to X\to X''\to 0$ be an exact sequence in
  $\bfC(\A)$.  If two terms are in $^\perp \Y$, then the third term
  belongs to $^\perp \Y$.
\item Let $X=\bigcup X_i$ be a directed union of subobjects
  $X_i\subseteq X$ in $\bfC(\A)$. If all $X_i$ belong to $^\perp \Y$,
  then $X$ belongs to $^\perp \Y$.
\end{enumerate}
\end{lemma}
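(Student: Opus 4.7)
The unifying observation for both parts is that every $Y\in\Y$ has injective components, so the Hom-complex functor $\Hom_\A^\bullet(-,Y)\colon\bfC(\A)^\op\to\bfC(\Ab)$ carries short exact sequences of complexes to short exact sequences of complexes of abelian groups, and carries monomorphisms to componentwise surjections. Combined with the identification $H^n\Hom_\A^\bullet(X,Y)\cong\Hom_{\bfK(\A)}(X,Y[n])$ and the closure hypothesis $\Y=\Y[1]$, this makes $X\in{}^\perp\Y$ equivalent to acyclicity of $\Hom_\A^\bullet(X,Y)$ for every $Y\in\Y$.

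For (1), fix $Y\in\Y$ and apply $\Hom_\A^\bullet(-,Y)$ to the given short exact sequence $0\to X'\to X\to X''\to 0$ to obtain a short exact sequence of complexes of abelian groups; the long exact cohomology sequence yields the two-out-of-three conclusion for acyclicity at once.

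For (2), fix $Y\in\Y$ and use that $\Hom$ converts filtered colimits in the first variable into inverse limits, writing $\Hom_\A^\bullet(X,Y)=\lp_i\Hom_\A^\bullet(X_i,Y)$ as a filtered inverse limit of acyclic complexes whose transition maps are componentwise surjective (by injectivity of each $Y^n$). I would then argue by transfinite induction after refining the directed family $(X_i)$ to a continuous well-ordered chain $0=Z_0\subseteq Z_1\subseteq\cdots\subseteq Z_\la=X$, obtained by well-ordering a cofinal subfamily, forming $Z_{\a+1}=Z_\a+X_{i_\a}$ at successor steps, and taking unions at limits. At successor steps I would invoke (1), together with the closure of $^\perp\Y$ under coproducts (immediate from $\Hom_{\bfK(\A)}(\bigoplus X_i,Y)=\prod\Hom_{\bfK(\A)}(X_i,Y)$ and the fact that a product of zero groups is zero), applied to the short exact sequence
\[0\to Z_\a\cap X_{i_\a}\to Z_\a\oplus X_{i_\a}\to Z_{\a+1}\to 0.\]
At limit ordinals of countable cofinality, the Milnor $\lim^1$ sequence gives acyclicity of the inverse limit, its $\lim^1$ term vanishing by the surjectivity of the transitions.

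The main obstacle lies entirely in (2): one must control the intersections $Z_\a\cap X_{i_\a}$ at successor steps so that membership in $^\perp\Y$ propagates along the refinement, and one must handle limit ordinals of uncountable cofinality, where the vanishing of $\lim^1$ is not automatic from surjectivity alone. Making the transfinite induction close in the face of these two issues is the technical heart of the argument in \cite{St2011}.
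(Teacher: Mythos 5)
Your proof of part (1) is correct and coincides with the paper's argument: apply $\HOM_\A(-,Y)$, use that injectivity of the components of $Y$ makes the resulting sequence of Hom complexes short exact, and conclude via the long exact cohomology sequence together with $\Hom_{\bfK(\A)}(-,Y[n])=H^n\HOM_\A(-,Y)$ and $\Y=\Y[1]$.

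Part (2), however, has a genuine gap, and you have named it yourself: the Milnor $\lp^1$ argument only controls limit ordinals of countable cofinality, and at successor steps you have no way to certify that $Z_\a\cap X_{i_\a}$ lies in ${}^\perp\Y$, since the directed family need not be closed under intersections and $Z_\a$ is itself only a union of members. Neither obstacle is a technicality that the inverse-limit route can absorb: for uncountable inverse systems, surjectivity of the transition maps does not force the vanishing of the first derived limit, and higher derived limits intervene as well. The idea your proposal is missing --- and which is the actual content of the proof in the paper, following \v{S}\v{t}ov\'i\v{c}ek --- is to abandon Hom complexes for part (2) and instead observe that, because $Y$ has injective components, every short exact sequence of complexes $0\to Y\to E\to X\to 0$ is degreewise split, whence $\Hom_{\bfK(\A)}(X,Y[1])\cong\Ext^1_{\bfC(\A)}(X,Y)$. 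Combined with $\Y=\Y[1]$, this exhibits ${}^\perp\Y$ as an $\Ext^1$-orthogonal class in the abelian category $\bfC(\A)$, and closure under continuous well-ordered unions is then exactly Eklof's lemma (the successive quotients $Z_{\a+1}/Z_\a$ lie in ${}^\perp\Y$ by your part (1)); the reduction of an arbitrary directed union to well-ordered chains is a standard induction on the cardinality of the index set. This bypasses inverse limits and $\lp^1$ entirely.
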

\begin{proof}
(1) Let $Y\in\Y$. Then the induced sequence
\[0\to \HOM_\A(X'',Y)\to \HOM_\A(X,Y)\to \HOM_\A(X',Y)\to 0\] is
exact, where $\HOM_\A(-,-)$ denotes the usual Hom complex. Now observe that
\[\Hom_{\bfK(\A)}(-,Y[n])=H^n\HOM_\A(-,Y).\]

 (2) Let $Y\in\Y$ and observe that
 $\Hom_{\bfK(\A)}(-,Y[1])=\Ext^1_{\bfC(\A)}(-,Y)$. Thus the assertion
 follows from Eklof's lemma \cite{Ek1977}, using that it suffices to
 treat well-ordered chains; see \cite{St2011} for details.
\end{proof}

\begin{proof}[Proof of Proposition~\ref{pr:pac}]
  Let $\Y$ denote the class of complexes $I$ in $\bfK(\Inj\A)$ such
  that $\Hom_{\bfK(\A)}(X,I[n])=0$ for all $X\in\C$ and $n\in\bbZ$.
  We consider $^\perp \Y\subseteq\bfK(\A)$ and have $\C\subseteq{^\perp
    \Y}$ by definition. Now fix an object $X$ in $\C$ and a subobject
  $U\subseteq X$ in $\A$. Write $\bigcup U_i=U$ as a directed union
  of subobjects which are quotients of objects in $\C$. Thus each object
  $X/U_i$ belongs to $^\perp \Y$ since $\C$ is closed under
  cokernels. Now apply Lemma~\ref{le:pac}. Thus each $U_i$ is in
  $^\perp \Y$, therefore $U$, and finally $X/U$ belongs to $^\perp
  \Y$. Each object in $\A$ is a directed union of subobjects of the
  form $X/U$. Thus all objects of $\A$ belong to $^\perp\Y$. Clearly,
  this implies that all complexes in $\Y$ are null homotopic.
\end{proof}

Let us call a class $\C\subseteq\A$ \emph{saturated} if it is
closed under finite colimits and if there is a localising subcategory
$\B\subseteq\A$ such that $\B$ is generated by $\C$. 

\begin{example}[{\cite[Theorem~2.8]{Kr1997}}]
  Let $\A$ be a Grothendieck abelian category. Suppose that the full
  subcategory $\fp\A$ of finitely presented objects is abelian and
  that it generates $\A$. Then any Serre subcategory of $\fp\A$ is
  saturated.
\end{example}

The following is an immediate consequence of Proposition~\ref{pr:pac}.

\begin{proposition}\label{pr:min}
  Let $\A$ be a Grothendieck abelian category.  For a saturated class
  $\C$ of objects and a homotopically minimal complex of injective
  objects $I$, the following are equivalent:
\begin{enumerate} 
\item  $\Hom_{\bfK(\A)}(X,I[n])=0$ for all $X\in\C$ and $n\in\bbZ$.
\item $\Hom_\A(X,I^n)=0$ for all $X\in\C$ and $n\in\bbZ$.
\end{enumerate}
\end{proposition}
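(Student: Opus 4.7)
The implication (2)$\Rightarrow$(1) is immediate: if $\Hom_\A(X,I^n)=0$ for all $n$, then the Hom complex $\HOM_\A(X,I)$ is the zero complex, so $\Hom_{\bfK(\A)}(X,I[n])=H^n\HOM_\A(X,I)=0$.

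For (1)$\Rightarrow$(2) the strategy is to relocate the problem to the localising subcategory provided by saturation, where Proposition~\ref{pr:pac} applies directly. Since $\C$ is saturated, there is a localising subcategory $\B\subseteq\A$ generated by $\C$, and $\C$ is closed under finite colimits. Let $t\colon\A\to\B$ denote the right adjoint of the inclusion $i\colon\B\to\A$. The inclusion $i$ is exact (as $\B$ is localising), so $t$ preserves injectivity; hence $tI$ is a complex of injective objects in $\B$. By Lemma~\ref{le:env}, $tI$ is again homotopically minimal.

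Next I transport the vanishing hypothesis along the adjunction. For every $X\in\C\subseteq\B$ the adjunction $(i,t)$ yields $\Hom_\A(iX,I^n)\cong\Hom_\B(X,tI^n)$ for all $n$, and the same adjunction extends termwise to give an isomorphism of Hom complexes $\HOM_\A(iX,I)\cong\HOM_\B(X,tI)$. Taking cohomology gives
\[\Hom_{\bfK(\B)}(X,tI[n])\cong\Hom_{\bfK(\A)}(X,I[n])=0\]
for all $X\in\C$ and $n\in\bbZ$. Now $\B$ is a Grothendieck abelian category, $\C$ generates $\B$ and is closed under finite colimits, so Proposition~\ref{pr:pac} applies to $\B$ and the complex $tI$, showing that $tI$ is null homotopic.

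The final step is to combine ``null homotopic'' with ``homotopically minimal.'' If $tI$ admits a contracting homotopy $h$ with $dh+hd=\Id$, then for each $n$ the inclusion $\Ker d^n\to (tI)^n$ is split by $h\circ d^{n-1}\oplus(\text{the splitting of }d^n)$; a split monomorphism that is also essential (as the injective envelope required by homotopical minimality) must be an isomorphism, forcing $d^n=0$ for all $n$, and then $\Id_{tI}=dh+hd=0$, i.e., $tI=0$. Therefore $\Hom_\A(X,I^n)\cong\Hom_\B(X,tI^n)=0$ for every $X\in\C$ and $n\in\bbZ$, as required.

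The main obstacle is really bookkeeping: one must check that $t$ behaves correctly on injectives, on Hom complexes, and on homotopically minimal complexes, so that Proposition~\ref{pr:pac} applied inside $\B$ yields information about $I$ inside $\A$. Once this reduction is in place, the argument that a null homotopic homotopically minimal complex vanishes is the only genuinely new observation needed.
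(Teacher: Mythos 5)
Your proof is correct and takes essentially the same route as the paper: pass to the localising subcategory $\B$ generated by $\C$, use the adjunction to transport the vanishing to $tI$, apply Proposition~\ref{pr:pac} together with Lemma~\ref{le:env}, and conclude $tI=0$. The only blemish is the formula for the retraction of $\Ker d^n\to (tI)^n$, which should be $d^{n-1}\comp h^n$ rather than the expression you wrote; otherwise the argument is sound, and you usefully spell out the step (null homotopic plus homotopically minimal implies zero) that the paper leaves implicit in its citation of Lemma~\ref{le:env}.
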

\begin{proof}
  (1) $\Rightarrow$ (2): Let $\B$ denote the localising subcategory of
  $\A$ generated by $\C$ and write $t\colon\A\to\B$ for the right
  adjoint of the inclusion. The assumption implies that
  $\Hom_{\bfK(\B)}(X,tI[n])=0$ for all $X\in\C$ and $n\in\bbZ$. Thus
  $tI$ is null homotopic by Proposition~\ref{pr:pac}, and therefore
  $tI=0$ by Lemma~\ref{le:env}.

(2) $\Rightarrow$ (1): Clear since $\Hom_{\bfK(\A)}(X,I[n])=H^n\HOM_\A(X,I)$.
\end{proof}

\subsection*{Compactly generated triangulated categories}

Let $\T$ be a triangulated category and suppose that $\T$ admits small
coproducts. An object $X$ in $\T$ is \emph{compact} if each morphism
$X\to\coprod_{i\in I}Y_i$ in $\T$ factors through $\coprod_{i\in
  J}Y_i$ for some finite subset $J\subseteq I$. Let $\T^c$ denote the
full subcategory of compact objects and observe that $\T^c$ is a thick
subcategory. Following \cite{Ne1996}, the triangulated category $\T$
is \emph{compactly generated} if $\T^c$ is essentially small (that is,
the isomorphism classes of objects form a set) and $\T$ admits no
proper localising subcategory containing $\T^c$.

\begin{example}
  For a small additive category $\C$, the derived category
  $\bfD(\Mod\C)$ is compactly generated with subcategory of compact
  objects given by $\bfK^b(\C)\xto{\sim}\bfD(\Mod\C)^c$.
\end{example}

We shall need the following well-known result about Bousfield
localisation for compactly generated triangulated categories.

\begin{proposition}\label{pr:local}
  Let $\T$ be a compactly generated triangulated category and
  $\Sc\subseteq\T^c$ a triangulated subcategory. Then the triangulated category
\[\Sc^\perp:=\{Y\in\T\mid
\Hom_\T(X,Y)=0\text{ for all }X\in\Sc\}\] has small coproducts and is
compactly generated. Moreover, the left adjoint of the inclusion
$\Sc^\perp\to \T$ induces (up to direct summands) an equivalence
$\T^c/\Sc\xto{\sim} (\Sc^\perp)^c$.
\end{proposition}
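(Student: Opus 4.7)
The plan is to verify the three assertions in turn — closure of $\Sc^\perp$ under coproducts, existence of a Bousfield localisation $L\colon\T\to\Sc^\perp$ left adjoint to the inclusion, and the identification of compact objects — using Brown representability and Neeman's density theorem. First, since every $X\in\Sc$ is compact in $\T$, the natural map $\bigoplus_i\Hom_\T(X,Y_i)\to\Hom_\T(X,\coprod_i Y_i)$ is bijective, so $\Sc^\perp$ is closed under coproducts formed in $\T$ and inherits small coproducts.

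Next, let $\Loc(\Sc)\subseteq\T$ denote the smallest localising subcategory containing $\Sc$; then $\Loc(\Sc)^\perp=\Sc^\perp$ (the orthogonal only sees a generating class). The objects of $\Sc$ remain compact in $\Loc(\Sc)$ and generate it, so $\Loc(\Sc)$ is itself compactly generated. Brown representability now yields a right adjoint to $\Loc(\Sc)\hookrightarrow\T$, producing for every $Y\in\T$ a functorial triangle $Y'\to Y\to Y''\to Y'[1]$ with $Y'\in\Loc(\Sc)$ and $Y''\in\Sc^\perp$. The assignment $Y\mapsto Y''$ is the sought left adjoint $L$, and it realises $\Sc^\perp$ as the Verdier quotient $\T/\Loc(\Sc)$.

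Finally, the inclusion $\Sc^\perp\to\T$ preserves coproducts by the first step, hence its left adjoint $L$ sends $\T^c$ into $(\Sc^\perp)^c$; since $L$ annihilates $\Sc$, we obtain an induced exact functor $\T^c/\Sc\to(\Sc^\perp)^c$. Full faithfulness reduces, via $\Sc^\perp\simeq\T/\Loc(\Sc)$, to the statement that the kernel of $\T^c\to\T/\Loc(\Sc)$ equals $\Thick(\Sc)$; this follows from a standard argument expressing any compact object of $\Loc(\Sc)$ as a homotopy colimit of objects of $\Thick(\Sc)$ and using its compactness to extract a retract. The \emph{main obstacle} is the density statement: every compact object $W\in\Sc^\perp$ is a direct summand of some $LX$ with $X\in\T^c$. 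I would prove this by Neeman's telescope argument: using compact generation of $\T$, write a preimage of $W$ as a sequential homotopy colimit of compacts from $\T$, apply $L$, and use compactness of $W$ to split off a retract; the explicit ``up to direct summands'' in the statement is precisely the reflection of the idempotent-completion step that cannot in general be avoided here.
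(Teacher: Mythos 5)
Your argument is correct and follows exactly the route the paper takes: the paper's proof of this proposition is the single line ``Combine \cite[Theorem~2.1]{Ne1992} and \cite[Theorem~9.1.16]{Ne2001}'', and your three steps --- coproduct closure of $\Sc^\perp$ via compactness of the objects of $\Sc$, Bousfield localisation of $\T$ at $\Loc(\Sc)$ via Brown representability, and the Neeman--Thomason telescope argument identifying $(\Sc^\perp)^c$ with $\T^c/\Sc$ up to direct summands --- are precisely the content of those two citations. The only loose phrase is the claim that full faithfulness of $\T^c/\Sc\to(\Sc^\perp)^c$ ``reduces to'' identifying the kernel of $\T^c\to\T/\Loc(\Sc)$ with $\Thick(\Sc)$; it in fact also requires the homotopy-colimit computation of $\Hom_\T(X,\Gamma Y)$ for $X$ compact and $\Gamma Y\in\Loc(\Sc)$, but that is the same telescope argument you already invoke for the density step, so nothing essential is missing.
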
 
\begin{proof}
Combine \cite[Theorem~2.1]{Ne1992} and \cite[Theorem~9.1.16]{Ne2001}.
\end{proof}

\subsection*{Homotopy categories of injectives}

We describe the homotopy category of injective objects for
Grothendieck abelian categories of the form $\Ind\C$ given by a small
abelian category $\C$.  The following lemma provides the basis; it is
the special case where $\Ind\C$ is replaced by $\Mod\C$.

\begin{lemma}\label{le:mod}
  Let $\C$ be a small abelian category. Then the canonical functor
  \[Q\colon\bfK(\Inj\C)\lto\bfD(\Mod\C)\] is a triangle equivalence which
  restricts to an equivalence
\begin{equation}\label{eq:mod}
\bfK(\Inj\C)^c\stackrel{\sim}\lto\bfD^b(\mod\C).
\end{equation}
\end{lemma}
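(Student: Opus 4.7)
My plan is to prove $Q$ is both fully faithful and essentially surjective, establishing the equivalence, and then to transport the identification of compact objects via Lemma~\ref{le:derived}. Essential surjectivity is immediate from the standard fact that every complex in a Grothendieck abelian category admits a K-injective resolution which may be chosen as a complex of injectives.

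The heart of the proof is fully faithfulness, which, following \cite{Kr2005}, reduces to showing that every acyclic complex $I$ of injective $\C$-modules is null-homotopic. My approach is to apply Proposition~\ref{pr:pac} with $\A=\Mod\C$ and generating class $\mod\C$ (closed under finite colimits since $\mod\C$ is abelian). This reduces matters to verifying
\[\Hom_{\bfK(\Mod\C)}(X,I[n])=0\quad\text{for all }X\in\mod\C\text{ and }n\in\bbZ.\]
The key input is that every $X\in\mod\C$ has projective dimension at most $2$ in $\Mod\C$: the three-term exact sequence \eqref{eq:res} from the proof of Lemma~\ref{le:derived} is a projective resolution, as representable functors are projective in $\Mod\C$. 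From the short exact sequences $0\to Z^n\to I^n\to Z^{n+1}\to 0$ induced by acyclicity of $I$, applying $\Hom(X,-)$ and using $\Ext^i(X,I^n)=0$ for $i\geq 1$, dimension-shifting yields
\[\Ext^1(X,Z^n)\simeq\Ext^2(X,Z^{n-1})\simeq\Ext^3(X,Z^{n-2})=0.\]
Hence $\Hom_{\Mod\C}(X,I^\bullet)$ decomposes into short exact sequences, is itself exact, and its cohomology $\Hom_{\bfK(\Mod\C)}(X,I[n])$ vanishes.

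Once $Q$ is an equivalence, the compact objects transfer. The representables $\Hom_\C(-,X)$ are compact projective generators of $\Mod\C$, so $\bfD(\Mod\C)^c$ coincides with the thick subcategory they generate; by Lemma~\ref{le:derived} this subcategory identifies with $\bfK^b(\C)\simeq\bfD^b(\mod\C)$ (the right-hand equivalence also ensuring idempotent completeness). Composing with $Q$ produces $\bfK(\Inj\C)^c\xto{\sim}\bfD^b(\mod\C)$.

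I expect the main obstacle to be the $\Hom$-vanishing step. It depends critically on $\C$ being \emph{abelian} rather than merely small additive: only then does \eqref{eq:res} yield $\pd X\le 2$ for $X\in\mod\C$, and this bound is precisely what makes the dimension-shifting collapse.
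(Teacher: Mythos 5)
Your proof is correct and follows essentially the same route as the paper: Spaltenstein's K-injective resolutions reduce everything to showing that acyclic complexes of injectives are null-homotopic, which you deduce from Proposition~\ref{pr:pac} after verifying the Hom-vanishing for $X\in\mod\C$ via the finite resolution \eqref{eq:res} --- precisely the step the paper dismisses as ``a standard argument'', here carried out by dimension shifting. The identification of the compact objects via Lemma~\ref{le:derived} likewise matches the paper's argument.
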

\begin{proof}
  Recall from \cite{Sp1988} that the restriction of $Q$ to the full
  subcategory of K-injective complexes is a triangle
  equivalence. Moreover, each $X$ in $\bfK(\Mod\C)$ fits into an exact
  triangle $aX\to X\to iX\to$ with $aX$ acyclic and $iX$ K-injective.

  Each $F\in\mod\C$ admits a finite projective resolution
  \eqref{eq:res} since $\C$ is abelian. Then a standard argument
  yields $\Hom_{\bfK(\Mod\C)}(F,I[n])=0$ for all $n\in\bbZ$ and each
  acyclic complex $I$ of injectives, since this holds when $F$ is
  projective.  Thus $I$ is null homotopic by Proposition~\ref{pr:pac},
  and we conclude that $Q$ is an equivalence. It remains to observe
  that $\bfK^b(\C)\xto{\sim}\bfD^b(\mod\C)$ by Lemma~\ref{le:derived}.
\end{proof}

\begin{theorem}\label{th:KInj}
  Let $\C$ be a small abelian category. Then the triangulated category
  $\bfK(\Inj\Ind\C)$ has small coproducts and is compactly generated.
  Moreover, the canonical functor $\bfK(\Inj\Ind\C)\to\bfD(\Ind\C)$
  induces a triangle equivalence
\begin{equation}\label{eq:compacts}
\bfK(\Inj\Ind\C)^c\stackrel{\sim}\lto\bfD^b(\C).
\end{equation}
\end{theorem}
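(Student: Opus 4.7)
The plan is to realise $\bfK(\Inj\Ind\C)$ as a Bousfield localisation of $\bfK(\Inj\C)\simeq\bfD(\Mod\C)$. Set $\Sc:=\Thick(\eff\C)$, viewed as a thick subcategory of $\bfK(\Inj\C)^c\simeq\bfD^b(\mod\C)$ via Lemma~\ref{le:mod}. By Proposition~\ref{pr:local}, the subcategory $\Sc^\perp\subseteq\bfK(\Inj\C)$ has small coproducts, is compactly generated, and its compact objects coincide, up to direct summands, with $\bfD^b(\mod\C)/\Thick(\eff\C)$; this quotient is $\bfD^b(\C)$ by Corollary~\ref{co:derived}, and since $\C$ is Karoubian so is $\bfD^b(\C)$, so no idempotent completion is needed. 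Both assertions of the theorem will follow once $\bfK(\Inj\Ind\C)$ is identified with $\Sc^\perp$ in a way compatible with the canonical functor to $\bfD(\Ind\C)$.

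For the identification, let $q\colon\Mod\C\twoheadrightarrow\Ind\C$ be the quotient of Theorem~\ref{th:coh} and $q_\rho\colon\Ind\C\hookrightarrow\Mod\C$ its fully faithful right adjoint. Since $q$ is exact, $q_\rho$ preserves injectives and induces a fully faithful functor $\bfK(\Inj\Ind\C)\to\bfK(\Inj\C)$. For any $I\in\bfK(\Inj\Ind\C)$ and $G\in\Eff\C$, adjunction gives $\Hom_{\bfK(\Mod\C)}(G,q_\rho I[n])=\Hom_{\bfK(\Ind\C)}(qG,I[n])=0$, so the image lies in $\Sc^\perp$.

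For the converse, every complex of injectives decomposes in $\bfK$ as a homotopically minimal summand plus a null homotopic one, so it suffices to show that a homotopically minimal $J\in\Sc^\perp$ lies in the essential image of $q_\rho$. The class $\eff\C$ is saturated in $\Mod\C$: it is closed under finite colimits (being a Serre subcategory of $\mod\C$), and $\Eff\C=\Ind\eff\C$ is localising in $\Mod\C$ since it is the kernel of the exact functor $q$, which preserves coproducts. Proposition~\ref{pr:min} then yields $J\in\Sc^\perp$ iff $\Hom_{\Mod\C}(X,J^n)=0$ for all $X\in\eff\C$ and $n\in\bbZ$, and writing any $G\in\Eff\C$ as a filtered colimit from $\eff\C$ upgrades this to $\Hom_{\Mod\C}(G,J^n)=0$ for all $G\in\Eff\C$. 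This orthogonality characterises precisely those injectives of $\Mod\C$ that lie in the essential image of $q_\rho$; hence each $J^n$ belongs to $q_\rho(\Inj\Ind\C)$, and by full faithfulness of $q_\rho$ the differentials of $J$ lift uniquely, placing $J$ in the image of $\bfK(\Inj\Ind\C)$.

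Finally, under the identification $\bfK(\Inj\Ind\C)\simeq\Sc^\perp\hookrightarrow\bfK(\Inj\C)\simeq\bfD(\Mod\C)$, the canonical functor $\bfK(\Inj\Ind\C)\to\bfD(\Ind\C)$ corresponds to composition with the exact functor $\bfD(\Mod\C)\to\bfD(\Ind\C)$ induced by $q$; on compacts this becomes the quotient $\bfD^b(\mod\C)\twoheadrightarrow\bfD^b(\C)$ of Corollary~\ref{co:derived}, which is the desired equivalence \eqref{eq:compacts}. The principal technical step is the passage from derived-categorical right-orthogonality against all of $\Thick(\eff\C)$ to termwise orthogonality of the injective components $J^n$; this is what the homotopically minimal decomposition together with Proposition~\ref{pr:min} and the saturation of $\eff\C$ are designed to provide.
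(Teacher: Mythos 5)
Your proposal is correct and follows essentially the same route as the paper: identify $\bfK(\Inj\Ind\C)$ with the right-orthogonal of $\eff\C$ inside $\bfK(\Inj\C)\simeq\bfD(\Mod\C)$ (via Gabriel's description of $\Ind\C$ as a perpendicular category, the homotopically minimal decomposition, and Proposition~\ref{pr:min} applied to the saturated class $\eff\C$), then apply Proposition~\ref{pr:local} and Corollary~\ref{co:derived} to identify the compacts with $\bfD^b(\C)$. The only additions are welcome bits of bookkeeping the paper leaves implicit, such as the adjunction argument for the easy inclusion and the idempotent completeness of $\bfD^b(\C)$.
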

\begin{proof}
  The inclusion $\Ind\C\to\Mod\C$
  identifies \[\Inj\Ind\C=\{I\in\Inj\C\mid\Hom(X,I)=0\text{ for all
  }X\in\eff\C \}.\] This follows from the localisation sequence in
  Theorem~\ref{th:coh},
  since \[\Ind\C=\{Y\in\Mod\C\mid\Hom(X,Y)=0=\Ext^1(X,Y)\text{ for all
  }X\in\Eff\C\}\] by \cite[III.3]{Ga1962}. Then
  Proposition~\ref{pr:min} implies that the inclusion
  $\bfK(\Ind\C)\to\bfK(\Mod\C)$ identifies
\[\bfK(\Inj\Ind\C)=\{I\in\bfK(\Inj\C)\mid\Hom(X,I[n])=0\text{
  for all }X\in\eff\C,\, n\in\bbZ \},\] where $\eff\C\subseteq
\bfD^b(\mod\C)$ is viewed as subcategory of $\bfK(\Inj\C)$ via the
equivalence \eqref{eq:mod}. Now apply Proposition~\ref{pr:local} and
use that $\bfK(\Inj\C)$ is compactly generated by
Lemma~\ref{le:mod}. Thus $\bfK(\Inj\Ind\C)$ is compactly generated,
and $\bfK(\Inj\Ind\C)^c$ identifies with $\bfD^b(\C)$ thanks to
Corollary~\ref{co:derived}.
\end{proof}

\begin{remark}
  The first theorem from the introduction (labelled $\a=\aleph_0$) is
  a consequence of Theorem~\ref{th:KInj}, since a Grothendieck abelian
  category $\A$ that is generated by the full subcategory $\fp\A$ of
  finitely presented objects is equivalent to $\Ind\fp\A$ via the
  functor $\Ind\fp\A\to\A$ induced by the inclusion $\fp\A\to\A$.
\end{remark}

\begin{corollary}\label{co:locseq}
  The inclusion $\Inj\Ind\C\to\Inj\C$ induces a functor
  \[\bfK(\Inj\Ind\C)\lto \bfK(\Inj\C)\] that admits a left and a
  right adjoint. The left adjoint makes the following diagram
  commutative.
\[\begin{tikzcd}[row sep=scriptsize]
  \bfD^b(\mod\C)\arrow[tail]{d} \arrow[twoheadrightarrow]{rr}
  &&\bfD^b(\C)\arrow[tail]{d}\\
  \bfK(\Inj\C) \arrow[twoheadrightarrow,yshift=-1.35ex]{rr}
  \arrow[twoheadrightarrow,yshift=1.35ex]{rr}&& \bfK(\Inj\Ind\C)
  \arrow[tail]{ll}
\end{tikzcd}\] 
\end{corollary}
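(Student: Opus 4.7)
The plan is to identify $\bfK(\Inj\Ind\C)$ with $\Sc^\perp$ inside $\bfK(\Inj\C)$, where $\Sc=\Thick(\eff\C)$ is regarded as a thick subcategory of $\bfK(\Inj\C)^c\simeq\bfD^b(\mod\C)$ via Lemma~\ref{le:mod}; this identification has already been set up in the proof of Theorem~\ref{th:KInj}.

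For the \emph{left} adjoint I would invoke Proposition~\ref{pr:local} directly: since $\bfK(\Inj\C)$ is compactly generated and $\Sc\subseteq\bfK(\Inj\C)^c$ is a triangulated subcategory, the proposition produces the left adjoint of the inclusion $\Sc^\perp\hookrightarrow\bfK(\Inj\C)$, together with the compact generation of $\Sc^\perp$ and the description of $(\Sc^\perp)^c$ (up to summands) as $\bfK(\Inj\C)^c/\Sc$. For the \emph{right} adjoint I would appeal to Brown representability for compactly generated triangulated categories: because $\Sc$ consists of compact objects, the orthogonal $\Sc^\perp$ is closed under coproducts in $\bfK(\Inj\C)$, so the inclusion preserves coproducts; since $\bfK(\Inj\Ind\C)\simeq\Sc^\perp$ is itself compactly generated, Brown representability then yields the right adjoint.

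For the commutative square, note that a left adjoint whose right adjoint preserves coproducts automatically carries compact objects to compact objects, so the left adjoint restricts to a functor $\bfK(\Inj\C)^c\to\bfK(\Inj\Ind\C)^c$. Under the identifications $\bfK(\Inj\C)^c\simeq\bfD^b(\mod\C)$ (Lemma~\ref{le:mod}) and $\bfK(\Inj\Ind\C)^c\simeq\bfD^b(\C)$ (Theorem~\ref{th:KInj}), Proposition~\ref{pr:local} identifies this restriction (up to summands) with $\bfD^b(\mod\C)/\Thick(\eff\C)$, which by Corollary~\ref{co:derived} is $\bfD^b(\C)$. Idempotent completeness of $\bfD^b(\C)$ (an abelian category and hence its bounded derived category is Karoubian) removes the summand caveat, so we recover precisely the quotient functor in the top row.

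The main obstacle is really just the bookkeeping: one must check that the three equivalences $\bfK(\Inj\C)^c\simeq\bfD^b(\mod\C)$, $\bfK(\Inj\Ind\C)\simeq\Sc^\perp$, and $\bfK(\Inj\Ind\C)^c\simeq\bfD^b(\C)$ fit together compatibly, so that the abstract equivalence produced by Proposition~\ref{pr:local} really specialises to the quotient functor of Corollary~\ref{co:derived}. Once these compatibilities are traced through, the commutativity of the square is formal.
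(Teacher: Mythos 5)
Your proof follows the paper's own argument: both obtain the left adjoint from Proposition~\ref{pr:local} applied to $\Sc^\perp$ with $\Sc$ generated by $\eff\C$ inside $\bfK(\Inj\C)^c\simeq\bfD^b(\mod\C)$, get the right adjoint via Brown representability because the inclusion of the perpendicular to a set of compact objects preserves coproducts, and identify the restriction to compacts with the quotient $\bfD^b(\mod\C)\twoheadrightarrow\bfD^b(\C)$ of Corollary~\ref{co:derived}. Your additional remarks on why the left adjoint preserves compacts and on idempotent completeness of $\bfD^b(\C)$ merely make explicit what the paper leaves implicit.
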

\begin{proof}
  The left adjoint of the inclusion $F\colon
  \bfK(\Inj\Ind\C)\to\bfK(\Inj\C)$ exists by construction and
  restricts to $\bfD^b(\mod\C)\to \bfD^b(\C)$; see
  Proposition~\ref{pr:local}. Next observe that $F$ preserves
  coproducts since its essential image are the objects perpendicular
  to a set of compact objects. Thus $F$ admits a right adjoint by
  Brown representability.
\end{proof}

The following consequence of Theorem~\ref{th:KInj} is due to
\v{S}\v{t}ov\'i\v{c}ek; his proof is different and based on an
analysis of fp-injective modules.

\begin{corollary}[\cite{St2011}]
 \pushQED{\qed} Let $A$ be a coherent ring. Then $\bfK(\Inj A)$ is a
compactly generated triangulated category and the canonical functor
$\bfK(\Inj A)\to\bfD(\Mod A)$ induces  a triangle equivalence
\[\bfK(\Inj A)^c\stackrel{\sim}\lto\bfD^b(\mod A).\qedhere\]
\end{corollary}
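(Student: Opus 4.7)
The plan is to reduce this corollary directly to Theorem~\ref{th:KInj} by choosing the small abelian category $\C$ appropriately. Take $\C:=\mod A$, the category of finitely presented right $A$-modules. The defining property of a coherent ring is precisely that $\mod A$ is closed under kernels inside $\Mod A$, hence is an abelian subcategory; in particular $\mod A$ is a small abelian category to which Theorem~\ref{th:KInj} applies.

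The next step is to identify $\Ind(\mod A)\simeq\Mod A$. Every $A$-module is a filtered colimit of its finitely presented submodules (or, equivalently, a filtered colimit of finitely presented modules via a free presentation), so the fully faithful functor $\Ind(\mod A)\to\Mod A$ sending $(X_i)$ to $\varinjlim X_i$ is essentially surjective. Under this identification, the full subcategory $\Inj\Ind\C$ becomes exactly $\Inj A$, so $\bfK(\Inj\Ind\C)=\bfK(\Inj A)$ and $\bfD(\Ind\C)=\bfD(\Mod A)$.

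Now Theorem~\ref{th:KInj} states that $\bfK(\Inj\Ind\C)$ is compactly generated and that the canonical functor $\bfK(\Inj\Ind\C)\to\bfD(\Ind\C)$ restricts to an equivalence $\bfK(\Inj\Ind\C)^c\xto{\sim}\bfD^b(\C)$. Translating via the identifications above yields that $\bfK(\Inj A)$ is compactly generated and that the canonical functor $\bfK(\Inj A)\to\bfD(\Mod A)$ induces a triangle equivalence $\bfK(\Inj A)^c\xto{\sim}\bfD^b(\mod A)$, which is the statement of the corollary.

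The only nontrivial point is the identification $\Ind(\mod A)\simeq\Mod A$, and here coherence is what guarantees the equivalence interacts correctly with the abelian structure (so that $\mod A$ is abelian and $\Mod A$ is locally coherent, i.e. finitely presented modules are closed under kernels and cokernels); everything else is a direct translation of Theorem~\ref{th:KInj}. No additional arguments about $\eff\C$ or $\Eff\C$ are needed since those are absorbed into the cited theorem.
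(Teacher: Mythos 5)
Your proposal is correct and is exactly the reduction the paper intends: the corollary is stated there as an immediate consequence of Theorem~\ref{th:KInj} (with no written proof beyond the \(\qed\)), obtained by taking \(\C=\mod A\), which is abelian precisely because \(A\) is coherent, and identifying \(\Ind(\mod A)\simeq\Mod A\). Your spelled-out version of this specialization is accurate, including the correct observation that the identification \(\Ind(\mod A)\simeq\Mod A\) comes from writing every module as a filtered colimit of finitely presented ones via presentations (not of finitely presented submodules, as your first phrasing briefly suggests before the parenthetical corrects it).
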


\subsection*{Functoriality}

We discuss the functoriality of the assignment
$\C\mapsto\bfK(\Inj\Ind\C)$.

Fix an additive functor $f\colon \C\to\D$ between small additive
categories.  Then \[f^*\colon\Mod\D\lto\Mod \C,\quad X\mapsto X\comp
f\] admits a right adjoint $f_*$ and a left adjoint $f_!$ \cite[\S
5]{Grothendieck/Verdier:1972a}. Note that $f_!$ extends $f$, that is,
$f_!$ sends each representable functor $\Hom_\C(-,X)$ to
$\Hom_\D(-,f(X))$. Thus $f_!$ restricts to a functor
$\Ind\C\to\Ind\D$.

Now suppose that $f$ is an exact functor between abelian
categories. Then $f^*$ restricts to a functor $\Ind\D\to\Ind\C$, since
$\Ind\C=\Lex(\C^\op,\Ab)$ and $\Ind\D=\Lex(\D^\op,\Ab)$.  Thus
$(f_!,f^*)$ yields an adjoint pair of functors
$\Ind\C\rightleftarrows\Ind\D$ and $f_!$ is exact. Therefore $f^*$
restricts to a functor
\[\Inj\Ind\D\lto\Inj\Ind\C.\]

\begin{proposition}
An exact functor $f\colon \C\to\D$ induces a functor
\[F^*\colon\bfK(\Inj\Ind\D)\lto\bfK(\Inj\Ind\C)\] that admits a left
and a right adjoint. The left adjoint makes the following diagram
commutative.
\[\begin{tikzcd}[row sep=scriptsize]
  \bfD^b(\C)\arrow[tail]{d} \arrow[twoheadrightarrow]{rr}{\bfD^b(f)}
  &&\bfD^b(\D)\arrow[tail]{d}\\
  \bfK(\Inj\Ind\C) \arrow[yshift=-1.35ex]{rr}[swap]{F_*}
  \arrow[yshift=1.35ex]{rr}{F_!}&& \bfK(\Inj\Ind\D)
  \arrow{ll}[description]{F^*}
\end{tikzcd}\] 
\end{proposition}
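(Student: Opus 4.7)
The plan is to define $F^*$ by termwise application of $f^*$, produce both adjoints by representability, and verify the diagram by computing $F_!$ on the compact generators identified in Theorem~\ref{th:KInj}.

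First, $F^*$ is well-defined on the homotopy categories: since $f^*\colon\Ind\D\to\Ind\C$ is the right adjoint of the exact functor $f_!$ (as recalled just before the proposition), it preserves injective objects, so termwise application sends complexes in $\Inj\Ind\D$ to complexes in $\Inj\Ind\C$. To produce the left adjoint, I would check that $F^*$ preserves small products: products of injectives are injective, so products in $\bfK(\Inj\Ind\D)$ and $\bfK(\Inj\Ind\C)$ are computed termwise, and $f^*$ preserves products as a right adjoint. Since $\bfK(\Inj\Ind\D)$ is compactly generated by Theorem~\ref{th:KInj}, dual Brown representability, which holds for compactly generated triangulated categories, produces the left adjoint $F_!$.

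The crucial step is to identify $F_!$ on the subcategory of compact objects. Write $y_\C(Z)=\Hom_\C(-,Z)$. Under the identification $\bfK(\Inj\Ind\C)^c\simeq\bfD^b(\C)$ of Theorem~\ref{th:KInj}, the object $Z\in\C\subseteq\bfD^b(\C)$ corresponds to a compact object $P_Z$, concretely represented by an injective resolution of $y_\C(Z)$ in $\Ind\C$. Using the projectivity of $y_\C(Z)$ in $\Mod\C$ together with the embedding $\bfK(\Inj\C)\simeq\bfD(\Mod\C)$ from Lemma~\ref{le:mod}, one obtains a natural identification
\[
\Hom_{\bfK(\Inj\Ind\C)}(P_Z,W)=H^0(W(Z))\qquad (W\in\bfK(\Inj\Ind\C)).
\]
Since $(F^*V)(Z)=V(fZ)$ by construction of $F^*$, this yields
\[
\Hom(P_Z,F^*V)=H^0(V(fZ))=\Hom(P_{fZ},V)\qquad (V\in\bfK(\Inj\Ind\D)),
\]
so $F_!(P_Z)\simeq P_{fZ}$ by Yoneda; equivalently, $F_!$ sends $Z\in\C$ to $f(Z)\in\D$. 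Since $\C$ thickly generates $\bfD^b(\C)$ and $F_!$ is triangulated, this forces $F_!|_{\bfD^b(\C)}=\bfD^b(f)$.

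In particular $F_!$ takes $\bfK(\Inj\Ind\C)^c$ into $\bfK(\Inj\Ind\D)^c$; equivalently $F^*$ preserves coproducts, and Brown representability applied to the compactly generated category $\bfK(\Inj\Ind\D)$ produces the right adjoint $F_*$. The commutativity of the displayed diagram is precisely the identification $F_!|_{\bfD^b(\C)}=\bfD^b(f)$ established above. I expect the main technical obstacle to lie in the third paragraph: pinning down the concrete description of the compact object $P_Z$ through the chain of equivalences in Theorem~\ref{th:KInj}, Lemma~\ref{le:mod}, and Corollary~\ref{co:derived}, and in particular justifying the formula $\Hom_{\bfK(\Inj\Ind\C)}(P_Z,W)=H^0(W(Z))$. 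Once this is in place, the existence of both adjoints and the commutativity of the diagram follow formally.
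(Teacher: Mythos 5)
Your argument is correct in substance but takes a genuinely different route from the paper's. The paper works entirely with the adjoint triple $(f_!,f^*,f_*)$ on module categories together with the localisation functors $(P_!,P^*,P_*)$ and $(Q_!,Q^*,Q_*)$ from Corollary~\ref{co:locseq}: it \emph{defines} $F_!:=Q_!f_!P^*$ and $F_*:=Q_*\bfR f_*P^*$, verifies both adjunctions by a formal computation using $P_*P^*\cong\Id$, and obtains the commutativity of the square from the relation $F_!P_!\cong Q_!f_!$ combined with the known behaviour of $P_!$, $Q_!$, $f_!$ on compacts. You instead obtain the two adjoints abstractly, from dual Brown representability (using that $F^*$ preserves products) and from Brown representability (using that $F_!$ preserves compacts, hence $F^*$ preserves coproducts), and you pin down $F_!$ on compacts by the corepresentability formula $\Hom_{\bfK(\Inj\Ind\C)}(P_Z,W)=H^0(W(Z))$. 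That formula is correct and is the heart of your proof: $P_Z$ is the image of the projective $\Hom_\C(-,Z)$ under the left adjoint $L$ of the inclusion $\bfK(\Inj\Ind\C)\rightarrowtail\bfD(\Mod\C)$, so the adjunction, K-projectivity of $\Hom_\C(-,Z)$, and Yoneda give it. The trade-off: the paper's construction buys explicit formulas for the adjoints (in particular $F_*$ is exhibited as a derived functor of $f_*$, which is the information one usually wants), while yours is shorter and isolates the one computation that actually matters.

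One step needs tightening. From $F_!(P_Z)\cong P_{fZ}$, natural in $Z\in\C$, you conclude that $F_!$ agrees with $\bfD^b(f)$ on all of $\bfD^b(\C)$ because $\C$ thickly generates. Two exact functors that agree naturally on a generating subcategory need not be isomorphic; one needs a natural transformation defined on the whole source before the isomorphism can be propagated through triangles. The repair is available inside your own computation: run the corepresentability argument not for single objects $Z$ but for all bounded complexes of representables, i.e.\ for every $M\in\bfK^b(\C)\simeq\bfD^b(\mod\C)$ one has $\Hom(LM,W)\cong H^0$ of the total Hom complex, which is computed by evaluating $W$ at the terms of $M$. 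This yields a natural isomorphism $F_!\comp L\cong L'\comp\bfK^b(f)$ of exact functors on $\bfK^b(\C)$ (with $L'$ the analogue of $L$ for $\D$), and that is precisely the asserted commutativity after passing to the quotient $\bfD^b(\C)$. This is, in effect, what the paper's identity $F_!P_!\cong Q_!f_!$ accomplishes, and it is also why the paper phrases the comparison at the level of $\bfD^b(\mod\C)$ rather than of $\C$.
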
 
\begin{proof}
  In Corollary~\ref{co:locseq} the assertion has been established for
  the canonical functors
\[p\colon\mod\C\lto\C\quad\text{and}\quad q\colon\mod\D\lto\D.\]

Now consider the sequence $(f_!,f^*,f_*)$ of functors making the
following diagram commutative.
\[\begin{tikzcd}
  \mod\D\arrow[tail]{rr}&&
  \Mod\D\arrow{d}[description]{f^*}&&\Inj\Ind\D\arrow[tail]{ll}\arrow{d}\\
  \mod\C\arrow[tail]{rr}\arrow{u}&&
  \Mod\C\arrow[xshift=-1.35ex]{u}{f_!}\arrow[xshift=1.35ex]{u}[swap]{f_*}&&\Inj\Ind\C\arrow[tail]{ll}
\end{tikzcd}\] We extend this diagram to complexes and obtain the
following diagram.
\[\begin{tikzcd}
  \bfD^b(\mod\D)\arrow[tail]{rr}&&
  \bfD(\Mod\D)\arrow{d}[description]{f^*}\arrow[twoheadrightarrow,yshift=1.35ex]{rr}{Q_!}
  \arrow[twoheadrightarrow,yshift=-1.35ex]{rr}[swap]{Q_*}&&
  \bfK(\Inj\Ind\D)\arrow[tail]{ll}[description]{Q^*}\arrow{d}[description]{F^*}\\
  \bfD^b(\mod\C)\arrow[tail]{rr}\arrow{u}&&
  \bfD(\Mod\C)\arrow[xshift=-1.35ex]{u}{f_!}\arrow[xshift=1.35ex]{u}[swap]{\bfR f_*}
  \arrow[twoheadrightarrow,yshift=1.35ex]{rr}{P_!}
  \arrow[twoheadrightarrow,yshift=-1.35ex]{rr}[swap]{P_*}&&
  \bfK(\Inj\Ind\C)\arrow[tail]{ll}[description]{P^*}
  \arrow[xshift=-1.35ex]{u}{F_!}\arrow[xshift=1.35ex]{u}[swap]{F_*}
\end{tikzcd}\]
Here, $\bfR f_*$ denotes the right derived functor. Thus it remains to
describe the vertical functors on the right. In fact, $F^*$ is
determined by the identity $P^*F^*=f^*Q^*$. Now set
$F_!:=Q_! f_! P^*$ and $F_*:=Q_*\bfR f_* P^*$. Then we have
\begin{align*}
  \Hom(F_!,-)&=\Hom(Q_! f_! P^*,-)\\ 
             &=\Hom(-,P_*f^*Q^*)\\ &=\Hom(-,P_*P^*F^*)\\ 
             &=\Hom(-,F^*)
               \intertext{and similarly}
               \Hom(-,F_*)&=\Hom(F^*,-).
\end{align*}

It remains to show that $F_!$ restricts on compacts to $\bfD^b(f)$,
after identifying the full subcategory of compacts in
$\bfK(\Inj\Ind\C)$ with $\bfD^b(\C)$ via \eqref{eq:compacts}.  This
assertion holds for $P_!$, $Q_!$, and $f_!$. Then the identity
$F_!P_!=Q_! f_!$ yields the assertion for $F_!$, using that the
diagram
\[\begin{tikzcd}
  \bfK^b(\C)\arrow{d}[swap]{\bfK^b(f)}\arrow{rr}{\sim}&&
\bfD^b(\mod\C)\arrow{d}\arrow[twoheadrightarrow]{rr}{\bfD^b(p)}
  &&\bfD^b(\C)\arrow{d}{\bfD^b(f)}\\
  \bfK^b(\D)\arrow{rr}{\sim}&&\bfD^b(\mod\D)\arrow[twoheadrightarrow]{rr}{\bfD^b(q)}
  &&\bfD^b(\D) 
\end{tikzcd}\] is commutative.
\end{proof}

\section{Grothendieck abelian categories}\label{se:grothendieck}

In this section we generalise the results from the previous sections
to arbitrary Grothendieck abelian categories. This involves the
concepts of locally presentable abelian and well-generated
triangulated categories.

\subsection*{Locally presentable abelian categories}
Grothendieck abelian categories are well-known to be locally
presentable in the sense of Gabriel and Ulmer \cite{GU1971}. We recall
this concept, refering to \cite{AR1994, GU1971} for details and
unexplained terminology.

Let $\A$ be a cocomplete category and fix a regular cardinal $\a$. An
object $X$ in $\A$ is \emph{$\a$-presentable} if the representable
functor $\Hom_\A(X,-)$ preserves $\a$-filtered colimits.  We denote by
$\A^\a$ the full subcategory which is formed by all $\a$-presentable
objects. Observe that $\A^\a$ is closed under $\a$-small colimits in
$\A$.  The category $\A$ is called \emph{locally $\a$-presentable} if
$\A^\a$ is essentially small and each object is an $\a$-filtered
colimit of $\a$-presentable objects. Moreover, $\A$ is \emph{locally
  presentable} if it is locally $\b$-presentable for some cardinal
$\b$.  Note that we have for each locally presentable category $\A$ a
filtration $\A=\bigcup_\b\A^\b$ where $\b$ runs through all regular
cardinals.

Let $\C$ be a small additive category and fix a regular cardinal $\a$.
When $\C$ has $\a$-small colimits we
write \[\Ind_\a\C:=\Lex_\a(\C^\op,\Ab)\] for the category of left
exact functors $\C^\op\to\Ab$ preserving $\a$-small products. This
category is locally $\a$-presentable. Conversely, for any locally
$\a$-presentable additive category $\A$ the assignment
$X\mapsto\Hom_\A(-,X)|_{\A^\a}$ induces an equivalence
\[\A\stackrel{\sim}\lto\Ind_\a\A^\a.\]

\subsection*{Grothendieck abelian categories}

We begin with a discussion of the localisation theory for Grothendieck
abelian categories.

\begin{proposition}\label{pr:quotient}
  Let $\A$ be a Grothendieck abelian category and $\a$ a regular
  cardinal. Suppose that $\A$ is locally $\a$-presentable and that
  $\A^\a$ is abelian. For a localising subcategory   $\B\subseteq\A$ 
  such that $\B\cap\A^\a$ generates $\B$, the following holds:
\begin{enumerate}
\item $\B$ and $\A/\B$ are locally $\a$-presentable Grothendieck
  abelian categories.
\item $\B^\a=\B\cap\A^\a$ and the quotient functor $\A\to\A/\B$ induces
  an equivalence\[\A^\a/\B^\a\stackrel{\sim}\lto (\A/\B)^\a.\]
\item The inclusion $\B\to\A$ induces a localisation sequence.
\[
\begin{tikzcd}[row sep=scriptsize]
\B^\a\arrow[tail]{rr}\arrow[tail]{d}&&
\A^\a\arrow[twoheadrightarrow]{rr}\arrow[tail]{d}&&\A^\a/\B^\a \arrow[tail]{d}\\
\B \arrow[tail,yshift=0.75ex]{rr} &&\A  \arrow[twoheadrightarrow,yshift=-0.75ex]{ll}
\arrow[twoheadrightarrow,yshift=0.75ex]{rr} &&\A/\B  \arrow[tail,yshift=-0.75ex]{ll}
\end{tikzcd}
\]
\end{enumerate}
\end{proposition}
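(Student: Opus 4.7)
The plan is to proceed in three stages, combining Gabriel's localisation theory for Grothendieck categories with the Gabriel--Ulmer machinery for locally presentable categories. Since $\B$ is a localising subcategory of the Grothendieck abelian category $\A$, the quotient $\A/\B$ is Grothendieck abelian, the inclusion $E\colon\B\to\A$ admits a right adjoint $t$, and the quotient $Q\colon\A\to\A/\B$ admits a fully faithful right adjoint $S$. This supplies the bottom row of the diagram in~(3) and reduces the proposition to controlling $\a$-presentable objects.

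I then set $\Sc:=\B\cap\A^\a$ and verify that $\Sc$ is a Serre subcategory of $\A^\a$; this is immediate because kernels, cokernels and extensions in $\A^\a$ coincide with those in $\A$, and $\B$ is exact-closed. By hypothesis $\Sc$ generates $\B$, and a standard Gabriel--Ulmer argument then shows that $\B$ coincides with the essential image of the $\a$-filtered colimit functor $\Ind_\a\Sc\to\A$, so $\B$ is locally $\a$-presentable with $\B^\a\supseteq\Sc$. The reverse inclusion $\B^\a\subseteq\Sc$ comes from the standard retract argument: an object in $\B^\a$, written as an $\a$-filtered colimit of objects of $\Sc$, is a retract of some term, and $\Sc$ is closed under retracts in $\A^\a$. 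This establishes (1) for $\B$ and the first assertion of (2).

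For the identification $\A^\a/\Sc\xto{\sim}(\A/\B)^\a$, the technical heart is to show that $Q$ sends $\A^\a$ to $(\A/\B)^\a$, equivalently that $S$ preserves $\a$-filtered colimits. I would first deduce that $t$ preserves $\a$-filtered colimits: for $X\in\B^\a$ and an $\a$-filtered diagram $(Y_j)$ in $\A$,
\[\Hom_\B(X,t(\colim_j Y_j))=\Hom_\A(EX,\colim_j Y_j)=\colim_j\Hom_\B(X,tY_j),\]
and since $\B^\a$ generates $\B$ this forces the canonical map $\colim_j tY_j\to t(\colim_j Y_j)$ to be an isomorphism. A parallel argument, exploiting the compatibility between $t$ and $S$ through the torsion sequence, yields the analogous property for $S$, whence $Q$ preserves $\a$-presentability and induces a comparison $\A^\a/\Sc\to(\A/\B)^\a$. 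Full faithfulness of this comparison is automatic for Gabriel quotients of $\a$-type, and essential surjectivity follows from another retract argument: for $Z\in(\A/\B)^\a$, writing $SZ$ as an $\a$-filtered colimit of $Y_i\in\A^\a$ gives $Z=\colim QY_i$, so $Z$ is a retract of some $QY_i$ and hence lies in the abelian quotient $\A^\a/\Sc$.

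Finally, (3) assembles the localisation sequence attached to the Serre subcategory $\Sc\subseteq\A^\a$ of the abelian category $\A^\a$ with the one attached to $\B\subseteq\A$, the rightmost vertical arrow being the equivalence from (2). The main obstacle will be the interaction between $\a$-filtered colimits and the Gabriel quotient, namely the verification that $t$ and $S$ preserve $\a$-filtered colimits and that the induced comparison on $\a$-presentable objects is essentially surjective; the remaining ingredients are bookkeeping on top of Gabriel's theorem and Gabriel--Ulmer.
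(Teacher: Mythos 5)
Your overall strategy is sound and is essentially an unpacking of what the paper does: the paper's proof is a two-line reduction to Theorems~2.6 and 2.8 of \cite{Kr1997} for $\a=\aleph_0$, plus the remark that the general case amounts to identifying $\B\rightarrowtail\A\twoheadrightarrow\A/\B$ with $\Ind_\a\B^\a\to\Ind_\a\A^\a\to\Ind_\a(\A^\a/\B^\a)$; your version makes this explicit in terms of the adjoints $t$ and $S$. There is, however, a genuine gap at the step you yourself call the technical heart. Your argument that $t$ preserves $\a$-filtered colimits is correct and really is a two-line computation, but the claim that ``a parallel argument \dots yields the analogous property for $S$'' is not substantiated, and the argument for $S$ is \emph{not} parallel. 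Concretely, for $Y=\colim_j SZ_j$ your computation shows the unit $Y\to SQY$ is a monomorphism with cokernel $C\in\B$; to get $C=0$ you must show $Y$ lies in the essential image of $S$, i.e.\ $\Hom_\A(B,Y)=0=\Ext^1_\A(B,Y)$ for all $B\in\B$. The $\Hom$-vanishing reduces to $B\in\B\cap\A^\a$ and follows from $\a$-presentability, but the $\Ext^1$-vanishing does not: if $C\neq 0$ there is a nonzero map $S_0\to C$ with $S_0\in\B\cap\A^\a$, and the pullback extension $0\to Y\to E\to S_0\to 0$ is non-split (a splitting would lift $S_0\to C$ to $S_0\to SQY$, contradicting $\Hom_\A(S_0,SQY)=0$), so you genuinely need $\Ext^1_\A(S_0,\colim_j SZ_j)=0$ even though $\Ext^1_\A(S_0,SZ_j)=0$ for each $j$. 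No torsion-sequence bookkeeping gives this for free.

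This is precisely the point where the hypothesis that $\A^\a$ is abelian must enter, and your sketch never invokes it here. The repair: given an extension $0\to\colim_j Y_j\to E\to S_0\to 0$ with $S_0\in\A^\a$, choose an $\a$-presentable subobject $E'\subseteq E$ mapping onto $S_0$ (possible since $S_0$ is $\a$-generated); then $K=\Ker(E'\to S_0)$ is again $\a$-presentable \emph{because $\A^\a$ is closed under kernels}, so $K\to\colim_j Y_j$ factors through some $Y_j$ and the extension is a pushout of an extension of $S_0$ by $Y_j$. This gives surjectivity of $\colim_j\Ext^1_\A(S_0,Y_j)\to\Ext^1_\A(S_0,\colim_j Y_j)$, which is what is needed. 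Two further steps are asserted rather than proved and deserve the same scrutiny: that every object of $\B$ is an $\a$-filtered colimit of objects of $\B\cap\A^\a$ (this is the content of \cite[Theorem~2.8]{Kr1997}, not a formal consequence of generation), and the ``automatic'' full faithfulness of $\A^\a/(\B\cap\A^\a)\to(\A/\B)^\a$, which requires a cofinality argument for the subobjects appearing in the Gabriel quotient. Either supply these arguments or follow the paper and reduce everything to the identification with the $\Ind_\a$-construction.
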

\begin{proof}
  For the case $\a=\aleph_0$, see Theorems~2.6 and 2.8 of
  \cite{Kr1997}.  The general case is analogous; it amounts to
  identifying the sequence $\B\rightarrowtail\A\twoheadrightarrow\A/\B$ with the sequence
  $\Ind_\a\B^\a\to \Ind_\a\A^\a\to \Ind_\a(\A^\a/\B^\a)$ which is
  induced by $\B^\a\rightarrowtail \A^\a\twoheadrightarrow \A^\a/\B^\a$.
\end{proof}

The Popesco--Gabriel theorem yields the following
well-known\footnote{References are \cite[p.~4]{GU1971} or
  \cite[9.11.3]{Grothendieck/Verdier:1972a}.} consequence.

\begin{corollary}\label{co:PG}
  Any Grothendieck abelian category is locally presentable. Moreover,
  there exists a regular cardinal $\a$ such that $\A^\a$ is abelian.
\end{corollary}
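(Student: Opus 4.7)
The plan is to deduce the corollary from the Popesco--Gabriel theorem together with Proposition~\ref{pr:quotient}. Choose a generator $U$ of $\A$ and set $R=\End_\A(U)$. Popesco--Gabriel asserts that $\Hom_\A(U,-)\colon\A\to\Mod R$ is fully faithful and admits an exact left adjoint; equivalently, $\A$ is equivalent to a quotient $\Mod R/\B$, where $\B\subseteq\Mod R$ is the localising subcategory of modules annihilated by this left adjoint.

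Next I would record two standard facts about $\Mod R$: it is locally $\aleph_0$-presentable, and for every regular cardinal $\a>\max(|R|,\aleph_0)$ the subcategory $(\Mod R)^\a$ of $\a$-presentable modules consists of the modules of cardinality less than $\a$ and is therefore abelian (it is closed under kernels, cokernels, subobjects and extensions inside $\Mod R$).

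The key step is then to choose $\a$ large enough that $\B$ is generated, as a localising subcategory, by $\B\cap(\Mod R)^\a$. This is possible because $\B$ is itself a Grothendieck abelian category (being a localising subcategory of a Grothendieck abelian category whose inclusion admits a right adjoint), and hence admits a set of generators; enlarging $\a$ one may assume that all of these generators lie in $(\Mod R)^\a$. With this choice, Proposition~\ref{pr:quotient} applied to the pair $(\Mod R,\B)$ yields that $\A\simeq\Mod R/\B$ is locally $\a$-presentable and that $\A^\a\simeq(\Mod R)^\a/\B^\a$ is abelian, being a Serre quotient of an abelian category.

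The main technical point is the simultaneous fulfilment of two conditions on $\a$: it must exceed $|R|$ so that $(\Mod R)^\a$ is abelian, and it must be large enough to absorb a set of generators of $\B$. Since both are set-sized constraints, any sufficiently large regular cardinal does the job.
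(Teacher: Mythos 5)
Your proposal is correct and follows essentially the same route as the paper: Popesco--Gabriel gives $\A\simeq\Mod R/\B$, one chooses a regular cardinal $\a$ large enough that $(\Mod R)^\a$ is abelian and contains a generator of $\B$, and Proposition~\ref{pr:quotient} finishes the argument. The only cosmetic difference is that you justify the abelianness of $(\Mod R)^\a$ by the cardinality description of $\a$-presentable modules, whereas the paper invokes its Lemma~\ref{le:bound}; for a module category these amount to the same thing.
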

\begin{proof}
  Let $\A$ be a Grothendieck abelian category with generator $U$ and
  set $\Ga:=\End_\A(U)$. Then the functor
  $\Hom_\A(U,-)\colon\A\to\Mod\Ga$ is fully faithful and admits an
  exact left adjoint $-\otimes_\Ga U$; it is the unique colimit
  preserving functor sending $\Ga$ to $U$. This induces an equivalence
  $\Mod\Ga/\C\xto{\sim}\A$, where $\C\subseteq\Mod\Ga$ denotes the
  localising subcategory of objects annihilated by $-\otimes_\Ga U$;
  see \cite{PG1964}. Now choose $\a$ so that $\mod_\a\Ga$ is abelian
  (see Lemma~\ref{le:bound} below) and contains a generator of
  $\C$. Then apply Proposition~\ref{pr:quotient}.
\end{proof}

Let $\C$ be a small additive category and fix a regular cardinal
$\a$. We write \[\mod_\a\C:=(\Mod\C)^\a\qquad\text{and}
\qquad\proj_\a\C:=\Proj\C\cap\mod_\a\C.\] The next lemma shows that
$\mod_\a\C$ is abelian when $\a$ is sufficiently large.

\begin{lemma}\label{le:bound}
The following conditions are equivalent:
\begin{enumerate}
\item The kernel of each morphism in $\mod\C$ belongs to $\mod_\a\C$.
\item The category $\proj_\a\C$ has \emph{pseudo-kernels}, that is, for each
  morphism $Y\to Z$ there exists a morphism $X\to Y$ making the sequence
  $X\to Y\to Z$ exact.
\item The category $\mod_\a\C$ is abelian. 
\end{enumerate}
\end{lemma}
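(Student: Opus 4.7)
The plan is to prove the cyclic implications $(3) \Rightarrow (1) \Rightarrow (2) \Rightarrow (3)$, which is the standard Freyd--Auslander route for characterising when a category of presentable functors is abelian; the essential content sits in $(2) \Rightarrow (3)$.

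For $(3) \Rightarrow (1)$: if $\mod_\a\C$ is abelian, a morphism $f$ in $\mod\C \subseteq \mod_\a\C$ has a kernel $K \in \mod_\a\C$. To identify $K$ with the kernel in $\Mod\C$, I would write the latter as an $\a$-filtered colimit of its $\a$-presentable subobjects, each of which factors through $K$ by the universal property of $K$ in $\mod_\a\C$; combined with $K \hookrightarrow \Ker_{\Mod\C}(f)$ this yields the identification, so $\Ker_{\Mod\C}(f) \in \mod_\a\C$. For $(1) \Rightarrow (2)$: given a morphism $Y \to Z$ in $\proj_\a\C$, write $Y = \coprod_{i \in I} C_i^\wedge$ as an $\a$-small coproduct of representables. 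Each finite subcoproduct $Y_{I'}$ is finitely presented and hence maps to $Z$ through a finite subcoproduct of $Z$, so $\Ker(Y_{I'} \to Z)$ is the kernel of a morphism in $\mod\C$ and lies in $\mod_\a\C$ by (1). Since $|I| < \a$, the kernel $K := \Ker(Y \to Z)$ is an $\a$-small filtered colimit of these objects and therefore lies in $\mod_\a\C$. A projective presentation $X \to Y' \twoheadrightarrow K$ with $Y' \in \proj_\a\C$ then exhibits $Y' \to Y$ as a pseudo-kernel of $Y \to Z$.

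For $(2) \Rightarrow (3)$: given $f \colon F \to G$ in $\mod_\a\C$, lift $f$ to a map of projective presentations with rows $P_1 \xto{d_F} P_0 \to F \to 0$ and $Q_1 \xto{d_G} Q_0 \to G \to 0$ and comparison morphisms $f_0 \colon P_0 \to Q_0$ and $f_1 \colon P_1 \to Q_1$. Form the pullback $E := P_0 \times_{Q_0} Q_1$, realised as the kernel of $(f_0,-d_G)\colon P_0 \oplus Q_1 \to Q_0$, and apply (2) to obtain $R \in \proj_\a\C$ together with a morphism $R \to P_0 \oplus Q_1$ whose image is $E$. A direct diagram chase shows that the composite $R \to P_0 \to F$ has image exactly $\Ker(f)$. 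Iterate once more: form the kernel of $(R \to P_0,-d_F) \colon R \oplus P_1 \to P_0$, cover it by some $T \in \proj_\a\C$ via (2), and conclude that $T \to R \twoheadrightarrow \Ker(f)$ is a presentation, placing $\Ker(f)$ in $\mod_\a\C$. Cokernels of morphisms in $\mod_\a\C$ are automatically $\a$-presentable, and the comparison of image with coimage takes place inside the ambient abelian category $\Mod\C$, so $\mod_\a\C$ is abelian.

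The main obstacle is the bookkeeping in $(2) \Rightarrow (3)$: one must iterate the pseudo-kernel construction twice and chase the resulting pullbacks to verify the equality of images, while also confirming that each auxiliary object lies in $\proj_\a\C$, which rests on the closure of $\proj_\a\C$ under $\a$-small coproducts. No genuinely new idea beyond the standard Freyd--Auslander trick is required; the remaining implications are essentially unwindings of definitions combined with the closure of $\mod_\a\C$ under $\a$-small colimits in $\Mod\C$.
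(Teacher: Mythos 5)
Your proof is correct and follows essentially the same route as the paper: the implication $(1)\Rightarrow(2)$ via writing a morphism in $\proj_\a\C$ as an $\a$-small filtered colimit of morphisms between finitely generated projectives and using closure of $\mod_\a\C$ under $\a$-small colimits, and $(2)\Rightarrow(3)$ via the standard Auslander pseudo-kernel argument, which the paper merely cites from \cite[III.2]{Au1971} but which you spell out correctly. The only cosmetic omission is that objects of $\proj_\a\C$ are in general \emph{direct summands} of $\a$-small coproducts of representables rather than such coproducts themselves; this is harmless since $\a$-presentability passes to direct summands.
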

\begin{proof}
(1) $\Rightarrow$ (2): The objects in $\proj_\a\C$ are precisely the
direct summands of coproducts $Y=\coprod_{i\in I}\Hom_\C(-,Y_i)$ with
$\card I<\a$. Clearly, $Y$ is the filtered colimit of subobjects
$\coprod_{i\in J}\Hom_\C(-,Y_i)$ with $\card J<\aleph_0$. This
colimit is $\a$-small, and it follows that any morphism $Y\to Z$ in
$\proj_\a\C$ is an $\a$-small filtered colimit of morphisms $Y_\la\to
Z_\la$ in $\proj_{\aleph_0}\C\subseteq\mod\C$. Thus
\[\Ker (Y\to Z)=\colim_\la\Ker(Y_\la\to Z_\la)\]
belongs to $\mod_\a\C$. It remains to observe that each object in
$\mod_\a\C$ is the quotient of an object in $\proj_\a\C$.

(2) $\Rightarrow$ (3): This follows from a standard argument
\cite[III.2]{Au1971} since each object in $\mod_\a\C$ is the cokernel
of a morphism in $\proj_\a\C$.

(3) $\Rightarrow$ (1): Clear. 
\end{proof}

When $\C$ has $\a$-small colimits, then the Yoneda functor
$\C\to\mod_\a\C$ admits a left adjoint; it is the $\a$-small colimit
preserving functor $\mod_\a\C\to\C$ taking each representable functor
$\Hom_\A(-,X)$ to $X$. Let $\eff_\a\C$ denote the full subcategory of
$\mod_\a\C$ consisting of the objects annihilated by this left
adjoint, and set $\Eff_\a\C:=\Ind_\a\eff_\a\C$.

\begin{proposition}\label{pr:presentation}
Let $\C$ be a small abelian category with $\a$-small coproducts and suppose
that $\Ind_\a\C$ is Grothendieck abelian. Then the
  inclusion $\Ind_\a\C\to\Mod\C$ induces a localisation sequence of
  abelian categories  
\begin{equation}\label{eq:presentation}
\begin{tikzcd}  
\Eff_\a\C \arrow[tail,yshift=0.75ex]{rr} && \Mod\C
  \arrow[twoheadrightarrow,yshift=-0.75ex]{ll}
  \arrow[twoheadrightarrow,yshift=0.75ex]{rr} &&\Ind_\a\C
  \arrow[tail,yshift=-0.75ex]{ll}
\end{tikzcd}
\end{equation} 
which restricts to the localisation sequence
\begin{equation*}
\begin{tikzcd}
  \eff_\a\C \arrow[tail,yshift=0.75ex]{rr} && \mod_\a\C
  \arrow[twoheadrightarrow,yshift=-0.75ex]{ll}
  \arrow[twoheadrightarrow,yshift=0.75ex]{rr} &&\C.
  \arrow[tail,yshift=-0.75ex]{ll}
\end{tikzcd}
\end{equation*}
\end{proposition}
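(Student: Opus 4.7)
The plan is to mimic the strategy of Theorems~\ref{th:auslander} and~\ref{th:coh} in the $\a$-small setting, first building the small localisation sequence $\eff_\a\C\rightarrowtail\mod_\a\C\twoheadrightarrow\C$ directly, and then lifting it to $\Ind_\a$ via Proposition~\ref{pr:quotient}.

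For the small sequence, the first task is to see that $\mod_\a\C$ is abelian: the hypothesis that $\C$ is abelian with $\a$-small coproducts gives all $\a$-small limits and colimits in $\C$, so $\proj_\a\C$ admits pseudo-kernels computed levelwise in $\C$, and Lemma~\ref{le:bound} applies. The Yoneda functor $\C\to\mod_\a\C$ then has a left adjoint $p$ by the universal property of the free $\a$-cocompletion; it is the unique $\a$-small colimit preserving functor extending $\id_\C$, and $\eff_\a\C=\ker p$ by definition. The heart of this step is exactness of $p$: given a short exact sequence $0\to F'\to F\to F''\to 0$ in $\mod_\a\C$, I would choose $\proj_\a\C$-presentations of $F'$ and $F''$ and assemble them via the horseshoe-lemma construction into a compatible presentation of $F$ with middle terms the direct sums of the outer ones. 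Applying $p$ preserves these splittings, so the resulting $3\times 3$ diagram has split exact top two rows, and the $3\times 3$ lemma yields exactness of the cokernel row $0\to p(F')\to p(F)\to p(F'')\to 0$. Then exactness of $p$ together with $\ker p=\eff_\a\C$ and the fully faithful right adjoint yields the equivalence $\mod_\a\C/\eff_\a\C\xto\sim\C$.

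For the large sequence, set $\A:=\Mod\C$ and $\B:=\Eff_\a\C=\Ind_\a\eff_\a\C$. Here $\A$ is locally $\a$-presentable with $\A^\a=\mod_\a\C$ abelian, and a routine check shows $\B$ is localising in $\A$ (using that $\eff_\a\C$ is Serre in $\mod_\a\C$, that $\Mod\C\simeq\Ind_\a\mod_\a\C$, and that arbitrary coproducts are $\a$-filtered colimits of $\a$-small ones); by construction $\B\cap\A^\a=\eff_\a\C$ generates $\B$. Proposition~\ref{pr:quotient} then yields a localisation sequence $\B\rightarrowtail\A\twoheadrightarrow\A/\B$ restricting to $\eff_\a\C\rightarrowtail\mod_\a\C\twoheadrightarrow\mod_\a\C/\eff_\a\C$, and composing with the equivalences $\A/\B\simeq\Ind_\a(\A^\a/\B^\a)\simeq\Ind_\a(\mod_\a\C/\eff_\a\C)\simeq\Ind_\a\C$ (the last coming from the previous paragraph) delivers~\eqref{eq:presentation} and its restriction to $\a$-presentable objects.

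The main obstacle I expect is the exactness of $p$: objects of $\proj_\a\C$ are direct summands of $\a$-small coproducts of representables rather than representables themselves, so $p$ applied to a projective presentation is not term-wise the identity on $\C$, and the horseshoe diagram must be set up carefully. However, $p$ preserves $\a$-small colimits and in particular finite direct sums, so split short exact sequences of projectives remain split exact after applying $p$, and the $3\times 3$ lemma then delivers the desired exact sequence.
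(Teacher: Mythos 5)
Your route runs in the opposite direction from the paper's: you build the small sequence $\eff_\a\C\rightarrowtail\mod_\a\C\twoheadrightarrow\C$ first and then induce the large one via Proposition~\ref{pr:quotient}, whereas the paper starts from the inclusion $\Ind_\a\C\to\Mod\C$, gets exactness of its left adjoint from the Popesco--Gabriel theorem, and only then restricts to $\a$-presentable objects. That inversion is legitimate in principle, but it places the entire burden on your first step, and that step has a genuine gap: the horseshoe/$3\times 3$ argument does not prove that $p(F')\to p(F)$ is a monomorphism. After applying $p$ to the horseshoe diagram, the columns $pP_1\to pP_0\to pF\to 0$ are only right exact, so the nine lemma does not apply; what the snake lemma actually gives is an exact sequence
\[
\Ker(pP_1''\to pP_0'')\lto p(F')\lto p(F)\lto p(F'')\lto 0,
\]
and killing the leftmost term is exactly the assertion $L_1p=0$, i.e.\ the exactness you are trying to prove. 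In the $\aleph_0$ case this is easy to repair because a second syzygy of a coherent functor over an abelian category is representable and Yoneda reflects kernels, so all higher derived functors of $p$ vanish. For $\a>\aleph_0$ the analogous syzygy $\Ker(P_1\to P_0)$ with $P_i\in\proj_\a\C$ is only an $\a$-small \emph{filtered colimit} of representables (flat, typically not projective), so the vanishing of $L_1p$ requires that $\a$-small filtered colimits be exact in $\C=(\Ind_\a\C)^\a$ --- which is precisely where the hypothesis that $\Ind_\a\C$ is Grothendieck abelian must enter. Your argument never invokes that hypothesis, which is a reliable sign that something essential is missing: the paper uses it through Popesco--Gabriel, and without it the exactness of $p$ is not to be expected.

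Two smaller points. First, the claim that $\a$-small coproducts in an abelian category yield all $\a$-small limits is false (e.g.\ countably presented abelian groups have countable coproducts but not countable products); fortunately you do not need it, since condition (1) of Lemma~\ref{le:bound} holds automatically when $\C$ is abelian (kernels of morphisms in $\mod\C$ already lie in $\mod\C\subseteq\mod_\a\C$), so $\mod_\a\C$ is abelian for every regular $\a$. Second, your lifting step is essentially sound once exactness of $p$ is in hand, but you should say explicitly why the composite equivalence $\Mod\C/\Eff_\a\C\simeq\Ind_\a(\mod_\a\C/\eff_\a\C)\simeq\Ind_\a\C$ has the \emph{given} inclusion $\Ind_\a\C\to\Mod\C$ as its right adjoint, i.e.\ identify the $\Eff_\a\C$-closed objects with $\Lex_\a(\C^\op,\Ab)$. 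To salvage your overall strategy, replace the $3\times3$ appeal by: write any monomorphism $K\rightarrowtail P_0$ with $P_0\in\proj_\a\C$ as an $\a$-small filtered colimit of monomorphisms in $\mod\C$, apply the exact functor of Theorem~\ref{th:auslander} termwise, and use exactness of $\a$-filtered colimits in the Grothendieck category $\Ind_\a\C$ to conclude $L_1p=0$.
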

\begin{proof}
  The inclusion $\Ind_\a\C\to\Mod\C$ has a left adjoint; it is the
  colimit preserving functor which is the identity on the
  representable functors \cite[V.1]{Ga1962}. This left adjoint is
  exact by the Popesco--Gabriel theorem \cite{PG1964}, and it sends
  $\a$-presentable objects to $\a$-presentable objects, since the
  right adjoint preserves $\a$-filtered colimits. This yields the left
  adjoint of the Yoneda functor $\C\to\mod_\a\C$. The rest follows
  from Proposition~\ref{pr:quotient}.
\end{proof}

There is an interesting consequence which seems worth mentioning.

\begin{corollary}\label{co:abelian}
  Let $\A$ be a locally $\a$-presentable Grothendieck abelian category
  such that $\A^\a$ is abelian. Then 
\[\frac{\Mod\A^\a}{\Eff_\a\A^\a}\stackrel{\sim}\lto\A\]
and $\A^\b$ is abelian for every
  regular cardinal $\b\ge \a$.
\end{corollary}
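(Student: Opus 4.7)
The plan is to apply Proposition~\ref{pr:presentation} to $\C=\A^\a$, which will yield the displayed equivalence, and then to combine this with Proposition~\ref{pr:quotient} at the cardinal $\b$ to deduce the abelianness of $\A^\b$.

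To verify the hypotheses of Proposition~\ref{pr:presentation} with $\C=\A^\a$, I note that $\A^\a$ is small and abelian by assumption, that it has $\a$-small coproducts (since $\A^\a$ is closed under $\a$-small colimits in $\A$), and that $\Ind_\a\A^\a\simeq\A$ is Grothendieck abelian by the Gabriel--Ulmer equivalence recalled in the previous subsection. The proposition then produces the localisation sequence \eqref{eq:presentation}, whose quotient part reads
\[
\frac{\Mod\A^\a}{\Eff_\a\A^\a}\stackrel{\sim}\lto\Ind_\a\A^\a=\A,
\]
and whose restricted sequence forces $\mod_\a\A^\a$ to be abelian.

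Now fix a regular cardinal $\b\ge\a$. The first step is to show that $\mod_\b\A^\a$ is abelian. By Lemma~\ref{le:bound} it suffices to check that every morphism in $\mod\A^\a$ has kernel in $\mod_\b\A^\a$; but the same lemma applied at the cardinal $\a$ (using the abelianness of $\mod_\a\A^\a$) places such kernels already in $\mod_\a\A^\a\subseteq\mod_\b\A^\a$.

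Finally I would apply Proposition~\ref{pr:quotient} to the locally $\aleph_0$-presentable Grothendieck abelian category $\Mod\A^\a$, viewed as locally $\b$-presentable, and to the localising subcategory $\B:=\Eff_\a\A^\a$. The generation hypothesis holds because $\B=\Ind_\a\eff_\a\A^\a$ is by definition generated by $\eff_\a\A^\a\subseteq\mod_\a\A^\a\subseteq\mod_\b\A^\a=(\Mod\A^\a)^\b$. Part~(2) of the proposition then identifies $\A^\b=(\Mod\A^\a/\B)^\b$ with the Serre quotient $\mod_\b\A^\a/(\mod_\b\A^\a\cap\B)$, which is abelian. The main obstacle is the cardinal bookkeeping required to bootstrap abelianness from $\mod_\a\A^\a$ up to $\mod_\b\A^\a$ via Lemma~\ref{le:bound}; after that, everything reduces to direct invocations of the preceding propositions.
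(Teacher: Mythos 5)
Your proposal is correct and follows essentially the same route as the paper: Proposition~\ref{pr:presentation} gives the presentation of $\A$, Lemma~\ref{le:bound} bootstraps the abelianness of $\mod_\b\A^\a$ from the case $\b=\a$, and Proposition~\ref{pr:quotient} (applied to $\Mod\A^\a$ with the localising subcategory $\Eff_\a\A^\a$, generated by $\eff_\a\A^\a\subseteq\mod_\b\A^\a$) identifies $\A^\b$ with an abelian Serre quotient of $\mod_\b\A^\a$. The only difference is that you spell out the cardinal bookkeeping in Lemma~\ref{le:bound} that the paper leaves as an observation.
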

\begin{proof}
  We have a quotient functor $Q\colon\Mod\A^\a\to\A$ by
  Proposition~\ref{pr:presentation}, and this yields the presentation
  of $\A$.  Now observe that $\mod_\b\A^\a$ is abelian for all
  $\b\ge\a$ by Lemma~\ref{le:bound}. Thus $Q$ restricts to an exact
  quotient functor of abelian categories $\mod_\b\A^\a\to\A^\b$ by
  Proposition~\ref{pr:quotient}.
\end{proof}

\subsection*{Well-generated triangulated categories}

The triangulated analogue of a Grothendieck abelian category is a
well-generated triangulated category in the sense of Neeman
\cite{Ne2001}.  Such triangulated categories admit small coproducts
and are $\a$-compactly generated for some regular cardinal $\a$. Here,
we collect their essential properties and refer to
\cite{Kr2001,Ne2001} for further details.

 Fix a triangulated category $\T$ and suppose that $\T$ has small
 coproducts. Recall that a full triangulated subcategory
 $\Sc\subseteq\T$ is \emph{localising} if $\Sc$ is closed under
 all coproducts. For a regular cardinal $\a$, a full triangulated
 subcategory $\Sc\subseteq\T$ is \emph{$\a$-localising} if it is
 closed under $\a$-small coproducts.  An object $X$ in $\T$ is called
 \emph{$\a$-small} if every morphism $X\to\coprod_{i\in I}Y_i$ in $\T$
 factors through $\coprod_{i\in J}Y_i$ for some subset $J\subseteq I$
 with $\card J<\a$.

 A triangulated category $\T$ with small coproducts is
 \emph{$\a$-compactly generated} if there is a full subcategory
 $\T^\a$ satisfying the following:
\begin{enumerate}
\item $\T^\a\subseteq\T$ is an essentially small $\a$-localising
  subcategory consisting of $\a$-small objects.
\item $\T$ admits no proper localising subcategory containing $\T^\a$.
\item Given a family $(X_i\to Y_i)_{i\in I}$ of morphisms in
$\T$ such that the induced map $\Hom_\T(C,X_i)\to\Hom_\T(C,Y_i)$ is surjective for all
$C\in\T^\a$ and $i\in I$,  the induced map $\Hom_\T(C,\coprod_i
X_i)\to\Hom_\T(C,\coprod_i Y_i)$ is surjective.
\end{enumerate}
Then $\T^\a$ is uniquely determined by (1)--(3) and the objects in
$\T^\a$ are called \emph{$\a$-compact}. Also, $\T$ is
$\b$-compactly generated for every regular cardinal $\b\ge\a$, and
$\T^\b$ is the smallest $\b$-localising subcategory of $\T$ containing
$\T^\a$. In particular, $\T=\bigcup_\b\T^\b$ where $\b$ runs through
all regular cardinals.

The $\aleph_0$-compactly generated triangulated categories are
precisely the usual compactly generated triangulated categories. 

The most important aspect of the theory is that well-generated
categories behave well under localisation; this is in complete analogy
to Grothendieck abelian categories.

\subsection*{Localisation theory for well-generated triangulated
  categories}

We recall the basic facts from the localisation theory for
well-generated triangulated categories. For further details, see
\cite{Kr2010,Ne2001}.

The following is the analogue of Proposition~\ref{pr:quotient} for
abelian categories.

\begin{proposition}\label{pr:triangle-quotient}
  Let $\T$ be a triangulated category and $\a$ a regular
  cardinal. Suppose that $\T$ is $\a$-compactly generated. For a
  localising subcategory $\Sc\subseteq\T$ such that $\Sc\cap\T^\a$
  generates $\Sc$, the following holds:
\begin{enumerate}
\item $\Sc$ and $\T/\Sc$ are $\a$-compactly generated triangulated categories.
\item $\Sc^\a=\Sc\cap\T^\a$ and the quotient functor $\T\to\T/\Sc$ induces
  (up to direct summands) a triangle
  equivalence $\T^\a/\Sc^\a\xto{\sim} (\T/\Sc)^\a$.
\item The inclusion $\Sc\to\T$ induces a localisation sequence.
\[
\begin{tikzcd}[row sep=scriptsize]
\Sc^\a\arrow[tail]{rr}\arrow[tail]{d}&&
\T^\a\arrow[twoheadrightarrow]{rr}\arrow[tail]{d}&&\T^\a/\Sc^\a \arrow[tail]{d}\\
\Sc \arrow[tail,yshift=0.75ex]{rr} &&\T  \arrow[twoheadrightarrow,yshift=-0.75ex]{ll}
\arrow[twoheadrightarrow,yshift=0.75ex]{rr} &&\T/\Sc  \arrow[tail,yshift=-0.75ex]{ll}
\end{tikzcd}
\]
\end{enumerate}
\end{proposition}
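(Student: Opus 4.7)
The plan is to mirror the proof of Proposition~\ref{pr:quotient}, replacing the Popesco--Gabriel theorem by its triangulated counterpart---Neeman's localisation theorem for well-generated triangulated categories \cite{Ne2001}.

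First I would establish that $\Sc$ is $\a$-compactly generated with $\Sc^\a=\Sc\cap\T^\a$. The subcategory $\Sc\cap\T^\a$ is essentially small, closed under $\a$-small coproducts, and each of its objects is $\a$-small in $\T$ and hence \emph{a fortiori} $\a$-small in $\Sc$. Since $\Sc\cap\T^\a$ generates $\Sc$ by hypothesis, Neeman's theorem \cite[Ch.~4]{Ne2001} yields that $\Sc$ is $\a$-compactly generated with $\Sc^\a$ equal to the closure of $\Sc\cap\T^\a$ under direct summands inside $\Sc$. As $\Sc\cap\T^\a$ inherits summands from $\T^\a$, this forces $\Sc^\a=\Sc\cap\T^\a$.

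Second, because $\Sc$ is generated by a set of $\a$-compact objects of $\T$, Brown representability supplies a right adjoint to the inclusion $\Sc\to\T$, and the Verdier quotient $\T/\Sc$ exists as a triangulated category with small coproducts; the quotient functor $Q\colon\T\to\T/\Sc$ preserves coproducts and admits a fully faithful right adjoint identifying $\T/\Sc$ with $\Sc^\perp\subseteq\T$, as in the Bousfield localisation formalism of \cite[\S 5]{Kr2010}. This produces the bottom localisation sequence.

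Third, I would identify the $\a$-compact objects of $\T/\Sc$. The functor $Q$ carries $\T^\a$ to $\a$-small objects of $\T/\Sc$, since $\Sc^\perp$ is closed under coproducts and the right adjoint therefore preserves them. Moreover $Q(\T^\a)$ generates $\T/\Sc$ as a localising subcategory because $\T^\a$ generates $\T$. The $\a$-analogue of the Thomason--Neeman theorem on localisation of compact objects \cite[Ch.~5]{Ne2001} then yields that $\T/\Sc$ is $\a$-compactly generated and that $Q$ induces an equivalence $\T^\a/\Sc^\a\xto{\sim}(\T/\Sc)^\a$ up to direct summands. The commutative diagram in (3) is then obtained by assembling the localisation sequences at the two levels; compatibility is automatic since both adjoints restrict to $\a$-compact objects by the preceding step. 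The main obstacle is precisely this third step---the characterisation of the $\a$-compact objects in a Verdier quotient by a localising subcategory generated by $\a$-compacts---which is the genuine triangulated analogue of Proposition~\ref{pr:quotient}(2) and requires the deeper parts of Neeman's machinery for well-generated categories.
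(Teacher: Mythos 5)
Your outline is correct, and it matches the paper, which disposes of this proposition simply by citing Theorem~4.4.9 of Neeman's book \cite{Ne2001}; your three steps are essentially an expansion of what that theorem packages together. You correctly identify the genuinely hard point (the description of the $\a$-compacts of the Verdier quotient), and like the paper you ultimately defer it to Neeman's machinery rather than reproving it.
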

\begin{proof}
 See Theorem~4.4.9 in \cite{Ne2001}.
\end{proof}

The following generalises Proposition~\ref{pr:local}, which treats the case
$\a=\aleph_0$.

\begin{proposition}\label{pr:perp}
  Let $\T$ be an $\a$-compactly generated triangulated category and
  $\Sc\subseteq\T^\a$ a triangulated subcategory that is closed under
  $\a$-small coproducts. Then the triangulated
  category \[\Sc^\perp:=\{Y\in\T\mid \Hom_\T(X,Y)=0\text{ for all
  }X\in\Sc\}\] has small coproducts and is $\a$-compactly generated.
  Moreover, the left adjoint of the inclusion $\Sc^\perp\to\T$ induces
  (up to direct summands) an equivalence $\T^\a/\Sc\xto{\sim}
  (\Sc^\perp)^\a$.
\end{proposition}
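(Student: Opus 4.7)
The plan is to reduce the statement to Proposition~\ref{pr:triangle-quotient} by replacing $\Sc$ with the localising subcategory it generates in $\T$. Set $\Loc(\Sc)\subseteq\T$ for the smallest localising subcategory containing $\Sc$. Before invoking the quotient machinery, I would first verify that $\Sc^\perp=\Loc(\Sc)^\perp$; this is immediate because for any fixed $Y$ the full subcategory $\{X\in\T\mid\Hom_\T(X,Y)=0\}$ is localising, so as soon as it contains $\Sc$ it contains $\Loc(\Sc)$.

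Since $\Sc\subseteq\T^\a$ generates $\Loc(\Sc)$, the hypothesis of Proposition~\ref{pr:triangle-quotient} holds for the pair $(\T,\Loc(\Sc))$. Applying it produces a localisation sequence $\Loc(\Sc)\rightarrowtail\T\twoheadrightarrow\T/\Loc(\Sc)$ with both $\Loc(\Sc)$ and $\T/\Loc(\Sc)$ being $\a$-compactly generated. The right adjoint of the quotient functor $\T\to\T/\Loc(\Sc)$ is fully faithful with essential image $\Loc(\Sc)^\perp=\Sc^\perp$, so this equivalence endows $\Sc^\perp$ with small coproducts and with the structure of an $\a$-compactly generated triangulated category. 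Together with Proposition~\ref{pr:triangle-quotient}(2), this yields a triangle equivalence (up to direct summands)
\[\T^\a/\Loc(\Sc)^\a\xto{\sim}(\T/\Loc(\Sc))^\a\simeq(\Sc^\perp)^\a,\]
where $\Loc(\Sc)^\a=\Loc(\Sc)\cap\T^\a$.

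It remains to identify $\Loc(\Sc)^\a$ with $\Sc$ up to direct summands, so that passing to Verdier quotients gives $\T^\a/\Sc\xto{\sim}\T^\a/\Loc(\Sc)^\a$ modulo idempotent completion. The inclusion $\Sc\subseteq\Loc(\Sc)\cap\T^\a=\Loc(\Sc)^\a$ is clear. For the reverse I would appeal to the standard description of $\a$-compact objects in an $\a$-compactly generated triangulated category: up to direct summands, $\Loc(\Sc)^\a$ coincides with the smallest thick $\a$-localising subcategory of $\T$ containing the chosen generators $\Sc$. Since $\Sc$ is already a triangulated subcategory closed under $\a$-small coproducts, that thick closure is simply the closure of $\Sc$ under direct summands in $\T$. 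The step I expect to be the main obstacle is precisely this last identification, as it is not elementary but rests on the structural results for $\a$-compactly generated triangulated categories from \cite{Ne2001}; everything else is a formal assembly of Proposition~\ref{pr:triangle-quotient} with the Bousfield localisation produced by the localising pair $(\Loc(\Sc),\Sc^\perp)$.
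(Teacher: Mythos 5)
Your proposal is correct and follows essentially the same route as the paper: both replace $\Sc$ by the localising subcategory $\bar\Sc=\Loc(\Sc)$ it generates, note that $\Sc^\perp=\bar\Sc^\perp$ and that the composite $\bar\Sc^\perp\rightarrowtail\T\twoheadrightarrow\T/\bar\Sc$ is an equivalence (Neeman's Theorem~9.1.16), and then invoke Proposition~\ref{pr:triangle-quotient} together with the identification of $\bar\Sc\cap\T^\a$ with $\Sc$ up to direct summands. The paper's proof is just a terser version of the same argument.
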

\begin{proof}
  Let $\bar\Sc\subseteq\T$ denote the smallest localising subcategory
  containing $\Sc$. Then the assertion follows from
  Proposition~\ref{pr:triangle-quotient}, since $\bar\Sc\cap\T^\a=\Sc$
  and the composite $\Sc^\perp=\bar\Sc^\perp
  \rightarrowtail\T\twoheadrightarrow\T/\bar\Sc$ is an equivalence by
  \cite[Theorem~9.1.16]{Ne2001}.
\end{proof}

\subsection*{The derived category of a Grothendieck abelian category}

The derived category of a Grothendieck abelian category is known to be
a well-generated triangulated category \cite{Ne2001b}.  For the
derived categories of rings and schemes, one finds a discussion of
$\a$-compact objects in \cite{Mu2011}. Here, we give a description of
the full subcategory of $\a$-compacts for any Grothendieck abelian
category, provided the cardinal $\a$ is sufficiently large. This is
based on the following special case.

\begin{proposition}\label{pr:murfet}
  Let $\C$ be a small additive category and $\a>\aleph_0$ a regular
  cardinal such that $\mod_\a\C$ is abelian. Then the derived category
  $\bfD(\Mod\C)$ is $\a$-compactly generated and the inclusion
  $\mod_\a\C\to\Mod\C$ induces an equivalence
\[\bfD(\mod_\a\C)\stackrel{\sim}\lto\bfD(\Mod\C)^\a.\]
\end{proposition}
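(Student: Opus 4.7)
Since $\Mod\C$ is Grothendieck abelian with compact projective generators $\{\Hom_\C(-,X)\}_{X\in\C}$, the derived category $\bfD(\Mod\C)$ is compactly generated, with $\bfD(\Mod\C)^c\simeq\bfK^b(\proj_{\aleph_0}\C)$ up to summands, and is a fortiori $\a$-compactly generated. A standard property of compactly generated triangulated categories recalled earlier in the excerpt (and due to Neeman \cite{Ne2001}) identifies $\bfD(\Mod\C)^\a$ with the smallest $\a$-localising subcategory of $\bfD(\Mod\C)$ containing the compacts. The strategy is therefore to show that the natural functor $\Phi\colon\bfD(\mod_\a\C)\to\bfD(\Mod\C)$ induced by the exact embedding $\mod_\a\C\hookrightarrow\Mod\C$ is fully faithful with essential image equal to this $\a$-localising closure.

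\textbf{Essential image equals the $\a$-compacts.} Each representable is compact, and every object of $\proj_\a\C$ is a summand of an $\a$-small coproduct of representables, so $\proj_\a\C\subseteq\bfD(\Mod\C)^\a$. By Lemma~\ref{le:bound}, every $M\in\mod_\a\C$ admits a presentation $P_1\to P_0\to M\to 0$ with $P_i\in\proj_\a\C$, so $\mod_\a\C$ and hence $\bfD^b(\mod_\a\C)$ sit inside $\bfD(\Mod\C)^\a$. For unbounded $Y\in\bfD(\mod_\a\C)$ I would write $Y=\mathrm{hocolim}_n\tau^{\le n}Y$ and each bounded-above piece $\tau^{\le n}Y=\mathrm{hocolim}_m\sigma^{\ge -m}\tau^{\le n}Y$; both are sequential homotopy colimits, hence $\a$-small since $\a>\aleph_0$, whose innermost terms are bounded complexes in $\mod_\a\C$. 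Conversely, the essential image of $\Phi$ is triangulated and closed under $\a$-small coproducts (because $\mod_\a\C$ is closed under $\a$-small coproducts in $\Mod\C$, and such coproducts are preserved by $\bfK(\mod_\a\C)\to\bfD(\Mod\C)$) and contains every representable, hence contains the $\a$-localising closure of the compacts, namely $\bfD(\Mod\C)^\a$.

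\textbf{Fully faithfulness.} Because $\proj_\a\C\subseteq\Proj\Mod\C$, a projective resolution of $M\in\mod_\a\C$ chosen inside $\mod_\a\C$ is also a projective resolution in $\Mod\C$, giving $\Ext^i_{\mod_\a\C}(M,N)=\Ext^i_{\Mod\C}(M,N)$ for all $M,N\in\mod_\a\C$. A standard dévissage upgrades this to fully faithfulness on $\bfD^b$, and replacing a bounded-above $X$ by a K-projective resolution from $\bfK^-(\proj_\a\C)$ (which remains K-projective in $\bfK(\Mod\C)$) yields fully faithfulness whenever the source is bounded above. The main obstacle is unbounded $X$: I would write $X=\mathrm{hocolim}_n\tau^{\le n}X$ and compare the Milnor $\varprojlim^1$ exact sequences computing $\Hom_{\bfD(\mod_\a\C)}(X,Y)$ and $\Hom_{\bfD(\Mod\C)}(\Phi X,\Phi Y)$. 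Since each $\tau^{\le n}X$ is bounded above, the previous step applies term-by-term, so the two Milnor sequences coincide and force the two hom-groups to agree. The hypothesis $\a>\aleph_0$ enters crucially here in ensuring that these countable coproducts live already inside $\mod_\a\C$, so that the hocolim exists in the source category.
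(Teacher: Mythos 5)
Your proposal is correct, and its first half --- reducing everything to the fact that $\bfD(\Mod\C)^\a$ is the smallest $\a$-localising subcategory containing the compacts $\bfK^b(\C)$, together with the observation that $\mod_\a\C$ has enough projectives supplied by $\proj_\a\C$ --- coincides with the paper's. Where you diverge is in the final identification. The paper invokes Spaltenstein to model $\bfD(\Mod\C)$ as the K-projective complexes inside $\bfK(\Proj\C)$; the $\a$-localising closure of the perfect complexes is then visibly the subcategory of K-projectives in $\bfK(\proj_\a\C)$, which is in turn a model for $\bfD(\mod_\a\C)$, so full faithfulness and the computation of the essential image come for free in one stroke. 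You instead analyse the functor $\Phi\colon\bfD(\mod_\a\C)\to\bfD(\Mod\C)$ directly: Ext-comparison and K-projective resolutions in $\bfK^-(\proj_\a\C)$ for bounded-above sources, then truncations and Milnor $\varprojlim^1$-sequences for unbounded complexes, plus a two-sided containment for the essential image. This is more laborious but has the virtue of making explicit exactly where $\a>\aleph_0$ enters (countable coproducts exist in $\mod_\a\C$, so the relevant homotopy colimits already live in the source category); the paper buries the same point in the unproved assertion that the K-projectives in $\bfK(\proj_\a\C)$ identify with $\bfD(\mod_\a\C)$.

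Two small repairs. First, a two-term presentation $P_1\to P_0\to M\to 0$ does not by itself place $M$ in the triangulated closure of $\proj_\a\C$: objects of $\mod_\a\C$ need not have finite projective dimension, so already at this step you must take a full resolution by objects of $\proj_\a\C$ and realise $M$ as a countable homotopy colimit of its brutal truncations --- the same device you deploy later for unbounded complexes. Second, your essential-image paragraph uses that the essential image of $\Phi$ is a triangulated subcategory, which presupposes that $\Phi$ is full; the logical order should be reversed, with full faithfulness established before the containment argument is run.
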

\begin{proof}
  First observe that $\mod_\a\C$ is an abelian category with enough
  projective objects. Indeed, any $\a$-small coproduct of
  representable functors belongs to $\mod_\a\C$ and any object in
  $\mod_\a\C$ is a quotient of such a projective object.

  Now identify $\bfD(\Mod\C)$ with the full subcategory of
  $\bfK(\Proj\C)$ consisting of the K-projective complexes
  \cite{Sp1988}. Because $\bfD(\Mod\C)$ is compactly generated, the
  subcategory $\bfD(\Mod\C)^\a$ identifies with the smallest
  $\a$-localising subcategory containing all perfect complexes. The
  latter equals the full subcategory of K-projectives in
  $\bfK(\proj_\a\C)$, and this in turn identifies with
  $\bfD(\mod_\a\C)$.
\end{proof} 

Let $\A$ be an abelian category and $\B\subseteq\A$ a Serre
subcategory. Define the full subcategory
\[\bfD_\B(\A):=\{X\in\bfD(\A)\mid H^n(X)\in\B\text{ for all
}n\in\bbZ\}\subseteq\bfD(\A)\] and note that the quotient
functor  $\A\to\A/\B$ induces a functor
\[\bfD(\A)/\bfD_\B(\A)\lto\bfD(\A/\B).\] 
We will need the following fact.
\begin{lemma}\label{le:relative}
  The functor $\bfD(\A)/\bfD_\B(\A)\to\bfD(\A/\B)$ is a triangle
  equivalence when the quotient functor $\A\to\A/\B$ admits a right
  adjoint.
\end{lemma}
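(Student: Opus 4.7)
The plan is to construct an explicit quasi-inverse to the functor $F\colon \bfD(\A)/\bfD_\B(\A) \to \bfD(\A/\B)$ by exploiting the right adjoint $i\colon \A/\B\to\A$ of the quotient functor $Q$. Since $Q$ is exact, it induces $\bfD(Q)\colon\bfD(\A)\to\bfD(\A/\B)$; and for any $X\in\bfD_\B(\A)$ one has $H^n(\bfD(Q)X)=Q(H^n X)=0$ for all $n$, so $\bfD(Q)$ factors through the Verdier quotient and produces $F$.

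The key step is to descend the termwise extension $i\colon\bfK(\A/\B)\to\bfK(\A)$ to a functor $\bar{\imath}\colon\bfD(\A/\B)\to\bfD(\A)/\bfD_\B(\A)$. By the universal property of the derived category, it suffices to show that $iZ$ lies in $\bfD_\B(\A)$ whenever $Z\in\bfK(\A/\B)$ is acyclic. This is the one place where both pieces of the hypothesis are used: because $i$ is fully faithful (as the right adjoint of a localisation), the counit gives $QiZ = Z$; and because $Q$ is exact, one may compute $Q(H^n(iZ)) = H^n(QiZ) = H^n(Z) = 0$, so $H^n(iZ)\in\ker Q=\B$ for all $n$. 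I expect this to be the only real content of the proof—once it is in place, the rest is formal.

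Finally I would verify that $F$ and $\bar{\imath}$ are mutually inverse. The identity $F\bar{\imath}=\mathrm{id}$ is immediate from $Qi=\mathrm{id}$. For $\bar{\imath}F\cong\mathrm{id}$, the unit $\eta_X\colon X\to iQX$ of the adjunction is natural and therefore extends to a chain map for any complex $X$ in $\A$; the triangle identity forces $Q\eta_X$ to be an isomorphism, so $\mathrm{cone}(\eta_X)$ is annihilated by $\bfD(Q)$ and lies in $\bfD_\B(\A)$. Hence $\eta_X$ becomes invertible in $\bfD(\A)/\bfD_\B(\A)$ and supplies the required natural isomorphism. Both $\bfD(Q)$ and the termwise extension of $i$ are triangulated, so $F$ is a triangle equivalence.
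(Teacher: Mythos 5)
Your proposal is correct and follows essentially the same route as the paper: extend the right adjoint $s$ of $q\colon\A\to\A/\B$ termwise to complexes, observe that it descends to the quotient because $qs\cong\Id$ forces $sZ\in\bfD_\B(\A)$ for acyclic $Z$, and then note that the cone of the unit $Y\to sqY$ lies in $\bfD_\B(\A)$, so the two functors are mutually quasi-inverse. Your version merely spells out the cohomology computations that the paper leaves implicit.
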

\begin{proof}
  Let $s$ denote right adjoint of $q\colon\A\to\A/\B$. The
  composite \[\bfK(\A/\B)\xto{s}\bfK(\A) \twoheadrightarrow\bfD(\A)
  \twoheadrightarrow\bfD(\A)/\bfD_\B(\A)\] annihilates each acyclic
  complex since $qs\cong\Id_{\A/\B}$. Thus $s$ induces an exact
  functor $\bfD(\A/\B)\to \bfD(\A)/\bfD_\B(\A)$ such that $qsX\cong X$
  for $X$ in $\bfD(\A/\B)$. On the other hand, for any complex $Y$ in
  $\bfD(\A)$ the cone of the adjunction morphism $Y\to sqY$ belongs to
  $\bfD_\B(\A)$; thus $Y\cong sqY$ in $\bfD(\A)/\bfD_\B(\A)$.
\end{proof}

The following result describes for any Grothendieck abelian category
the subcategory of $\a$-compact objects, provided the cardinal $\a$ is
sufficiently large.

\begin{theorem}\label{th:derived}
  Let $\A$ be a Grothendieck abelian category and $\a>\aleph_0$ a regular
  cardinal.  Suppose that $\A^\a$ is abelian and generates $\A$. Then
  the derived category $\bfD(\A)$ is $\a$-compactly generated and the
  inclusion $\A^\a\to\A$ induces a triangle equivalence
  $\bfD(\A^\a)\xto{\sim}\bfD(\A)^\a$.
\end{theorem}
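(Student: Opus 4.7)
The plan is to realise $\bfD(\A)$ as a Bousfield localisation of $\bfD(\Mod\A^\a)$, using the Auslander-type presentation of $\A$ from Corollary~\ref{co:abelian} together with the description of $\a$-compacts from Proposition~\ref{pr:murfet}. Concretely, $\bfD(\Mod\A^\a)$ is $\a$-compactly generated with $\bfD(\Mod\A^\a)^\a\simeq\bfD(\mod_\a\A^\a)$ by Proposition~\ref{pr:murfet} (using $\a>\aleph_0$ and the abelianness of $\mod_\a\A^\a$ from Lemma~\ref{le:bound}), and $\A\simeq\Mod\A^\a/\Eff_\a\A^\a$ by Corollary~\ref{co:abelian}, where the quotient functor admits a right adjoint, namely the fully faithful inclusion $\A\simeq\Ind_\a\A^\a\hookrightarrow\Mod\A^\a$ supplied by Proposition~\ref{pr:presentation}. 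Lemma~\ref{le:relative} therefore yields a triangle equivalence
\[
\bfD(\A)\;\simeq\;\bfD(\Mod\A^\a)\big/\Sc,\qquad \Sc:=\bfD_{\Eff_\a\A^\a}(\Mod\A^\a).
\]

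I next show that $\Sc$ is generated, as a localising subcategory of $\bfD(\Mod\A^\a)$, by $\eff_\a\A^\a\subseteq\bfD(\Mod\A^\a)^\a$, viewed as complexes concentrated in degree zero. Denote this generated subcategory by $\bar\Sc$; the inclusion $\bar\Sc\subseteq\Sc$ is clear. For the reverse, every object of $\Eff_\a\A^\a=\Ind_\a\eff_\a\A^\a$ is an $\a$-filtered colimit of objects in $\eff_\a\A^\a$, and since filtered colimits in the Grothendieck category $\Mod\A^\a$ are exact they coincide with homotopy colimits in $\bfD(\Mod\A^\a)$, yielding $\Eff_\a\A^\a\subseteq\bar\Sc$. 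Truncation triangles then place all bounded complexes with cohomology in $\Eff_\a\A^\a$ inside $\bar\Sc$, and the presentation $X\simeq\operatorname{hocolim}_n\tau^{\le n}X$ combined with a second truncation from below extends this to arbitrary $X\in\Sc$. With $\Sc=\bar\Sc$ in hand, Proposition~\ref{pr:perp} shows that $\bfD(\A)\simeq\Sc^\perp$ is $\a$-compactly generated and that $\bfD(\A)^\a$ is equivalent (up to direct summands) to $\bfD(\mod_\a\A^\a)/\Sc_0$, where $\Sc_0$ is the $\a$-localising subcategory of $\bfD(\mod_\a\A^\a)$ generated by $\eff_\a\A^\a$.

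Repeating the cohomological argument inside the small abelian category $\mod_\a\A^\a$ identifies $\Sc_0=\bfD_{\eff_\a\A^\a}(\mod_\a\A^\a)$ (only countable, hence $\a$-small, homotopy colimits of truncations are needed). The quotient $\mod_\a\A^\a\twoheadrightarrow\A^\a$ of Proposition~\ref{pr:presentation} admits a right adjoint, namely the restriction of $\A\hookrightarrow\Mod\A^\a$ to $\A^\a$, which lands in $\mod_\a\A^\a$ because this inclusion sends an object $X\in\A^\a$ to the representable functor $\Hom_\A(-,X)|_{\A^\a}$. Applying Lemma~\ref{le:relative} to this quotient yields $\bfD(\A^\a)\simeq\bfD(\mod_\a\A^\a)/\bfD_{\eff_\a\A^\a}(\mod_\a\A^\a)$, and composing with the identification of $\bfD(\A)^\a$ furnishes the claimed equivalence $\bfD(\A^\a)\xto{\sim}\bfD(\A)^\a$; the ``up to direct summands'' qualifier is vacuous because $\bfD(\A)^\a$ is automatically idempotent complete once $\a>\aleph_0$.

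The main obstacle is the identification $\Sc=\bar\Sc$. The bounded-complex case is routine via truncation triangles, but the unbounded case is delicate: the good truncations $\tau^{\le n}X$ of a complex $X\in\Sc$ are bounded above rather than bounded, so a supplementary argument is required. One option is a two-stage truncation handling the bounded-above pieces via K-projective resolutions; another is an orthogonality argument, showing $\bar\Sc^\perp=\Sc^\perp$ using the $\operatorname{lim}^1$-Milnor sequence together with the Gabriel-style description of $\Ind_\a\A^\a$ inside $\Mod\A^\a$ embedded in Proposition~\ref{pr:presentation}.
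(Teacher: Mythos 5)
Your proposal follows the same overall architecture as the paper's proof: present $\A$ as $\Mod\A^\a/\Eff_\a\A^\a$ via Proposition~\ref{pr:presentation} and Corollary~\ref{co:abelian}, pass to derived categories with Lemma~\ref{le:relative}, identify $\bfD(\Mod\A^\a)^\a$ with $\bfD(\mod_\a\A^\a)$ by Proposition~\ref{pr:murfet}, and then invoke the localisation theory of well-generated triangulated categories. The outer layers are fine, including the observation that the quotient $\mod_\a\A^\a\twoheadrightarrow\A^\a$ admits a right adjoint (restricted Yoneda), so that Lemma~\ref{le:relative} also applies on the level of $\a$-compacts.

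However, there is a genuine gap, and it sits exactly where you flag it: the identification of $\Sc:=\bfD_{\Eff_\a\A^\a}(\Mod\A^\a)$ with a localising subcategory generated by $\a$-compact objects is the technical heart of the theorem, and your argument does not close it. The d\'evissage handles objects of $\Eff_\a\A^\a$ and bounded complexes, but the unbounded case fails as written: $X\simeq\operatorname{hocolim}_n\tau^{\le n}X$ only reduces you to bounded-above complexes, and a bounded-above complex is a homotopy \emph{limit} (not colimit) of its bounded truncations $\tau^{\ge -m}X$, while its brutal truncations $\sigma^{\ge -m}X$ no longer have cohomology in $\Eff_\a\A^\a$. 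Neither of your two proposed repairs is carried out, and the orthogonality variant (showing $\bar\Sc^\perp=\Sc^\perp$, which would indeed suffice since $\bar\Sc={}^\perp(\bar\Sc^\perp)$) is essentially a restatement of the same difficulty. The paper closes this gap by a different device: it forms the coproduct-preserving cohomological functor $H\colon\bfD(\Mod\A^\a)\xto{H^*}\Mod\A^\a\xto{Q}\A$, whose kernel is $\Sc$, and cites \cite[\S 7.5]{Kr2010} for the fact that such a kernel is generated by homotopy colimits of \emph{countable} sequences of morphisms between $\a$-compact objects annihilated by $H$; since $\a>\aleph_0$, these homotopy colimits are again $\a$-compact and lie in $\bfD_{\eff_\a\A^\a}(\mod_\a\A^\a)$. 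This is also precisely where the hypothesis $\a>\aleph_0$ does its work, something your sketch never makes visible. A smaller point: the claim that $\a$-filtered colimits in $\Mod\A^\a$ ``coincide with homotopy colimits'' in $\bfD(\Mod\A^\a)$ is not meaningful for a general filtered index category; the inclusion $\Eff_\a\A^\a\subseteq\bar\Sc$ is true, but needs an actual argument, e.g.\ resolving each object of $\Eff_\a\A^\a$ by coproducts of objects of $\eff_\a\A^\a$ and taking sequential homotopy colimits of brutal truncations of the resolution.
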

\begin{proof}
  We set $\C:=\A^\a$ and identify $\A\xto{\sim}\Ind_\a\C$. Consider
  the following commutative diagram which is obtained from the pair of
  localisation sequences in Proposition~\ref{pr:presentation} by
  forming derived categories and using Lemma~\ref{le:relative}.
\[
\begin{tikzcd}[row sep=scriptsize]
 \bfD_{{\eff_\a}\C}(\mod_\a\C) \arrow[tail]{rr}\arrow[tail]{d} && \bfD(\mod_\a\C)
    \arrow[twoheadrightarrow]{rr}\arrow[tail]{d} &&\bfD(\C)\arrow[tail]{d}\\
\bfD_{{\Eff_\a}\C}(\Mod\C) \arrow[tail,yshift=0.75ex]{rr} && \bfD(\Mod\C)
  \arrow[twoheadrightarrow,yshift=-0.75ex]{ll}
  \arrow[twoheadrightarrow,yshift=0.75ex]{rr} &&\bfD(\Ind_\a\C)
  \arrow[tail,yshift=-0.75ex]{ll}
\end{tikzcd}\] The assertion follows from the localisation theory for
$\a$-compactly generated triangulated categories; see
Proposition~\ref{pr:triangle-quotient}. More precisely, we know from
Proposition~\ref{pr:murfet} that
\[\bfD(\Mod\C)^\a=\bfD(\mod_\a\C),\] and we
need to show that $\bfD_{{\eff_\a}\C}(\mod_\a\C)$ generates the
localising subcategory $\bfD_{{\Eff_\a}\C}(\Mod\C)$ of
$\bfD(\Mod\C)$. To see this, set $\T:=\bfD(\Mod\C)$ and let
$Q\colon\Mod\C\to\A$ denote the exact left adjoint of the functor
sending $X\in\A$ to $\Hom_\A(-,X)|_\C$ from \eqref{eq:presentation}.
Consider the cohomological functor
\[H\colon \T\stackrel{H^*}\lto\Mod\C\stackrel{Q}\lto\A\] and observe
that $H$ restricts to $\T^\a\to\A^\a$, since $Q$ restricts to
$\mod_\a\C\to\A^\a$. The kernel of $H$ equals
$\bfD_{{\Eff_\a}\C}(\Mod\C)$, and it is generated by the homotopy
colimits of countable sequences of morphisms in $\T^\a$ annihilated by
$H$; see \cite[\S 7.5]{Kr2010}. It remains to note that these homotopy
colimits belong to $\bfD_{{\eff_\a}\C}(\mod_\a\C)$ since
$\a>\aleph_0$.
\end{proof}

\begin{corollary}
 \pushQED{\qed} For a Grothendieck abelian category $\A$, the filtration
$\A=\bigcup_\a\A^\a$ induces a filtration
\[\bfD(\A)=\bigcup_{\a\text{ regular}}\bfD(\A^\a).\qedhere\]
\end{corollary}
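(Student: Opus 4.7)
The plan is to assemble the corollary from ingredients already in hand: the existence of a threshold cardinal past which $\A^\a$ is well-behaved, Theorem~\ref{th:derived} identifying $\bfD(\A^\a)$ with $\bfD(\A)^\a$ for such $\a$, and the standard filtration of any well-generated triangulated category by its subcategories of $\a$-compact objects.

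First I would use Corollary~\ref{co:PG} to fix a regular cardinal $\a_0$ for which $\A$ is locally $\a_0$-presentable and $\A^{\a_0}$ is abelian. Since $\A$ is locally $\a_0$-presentable, $\A^{\a_0}$ generates $\A$. By Corollary~\ref{co:abelian}, $\A^\b$ is then abelian for every regular $\b\ge\a_0$, and it still generates $\A$ because it contains $\A^{\a_0}$. After replacing $\a_0$ by $\max(\a_0,\aleph_1)$ if necessary, I may also assume $\a_0>\aleph_0$, so that Theorem~\ref{th:derived} applies to every regular $\b\ge\a_0$ and yields a triangle equivalence $\bfD(\A^\b)\xto{\sim}\bfD(\A)^\b$.

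Next I would invoke the general property of well-generated triangulated categories recalled in Section~\ref{se:grothendieck}: if $\T$ is $\a_0$-compactly generated, then $\T=\bigcup_\b\T^\b$ with $\b$ ranging over all regular cardinals, and $\T^\b$ is the smallest $\b$-localising subcategory of $\T$ containing $\T^{\a_0}$. Applying this to $\T=\bfD(\A)$ (which is $\a_0$-compactly generated by Theorem~\ref{th:derived}) gives
\[
\bfD(\A)\;=\;\bigcup_{\b\ge\a_0}\bfD(\A)^\b\;=\;\bigcup_{\b\ge\a_0}\bfD(\A^\b),
\]
using the equivalence from Theorem~\ref{th:derived} in the second step. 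Since the regular cardinals $\b\ge\a_0$ are cofinal among all regular cardinals, enlarging the index set to all regular cardinals (and interpreting $\bfD(\A^\b)$ as its image in $\bfD(\A)$ whenever $\A^\b$ is abelian, or simply discarding the terms below $\a_0$) gives the desired filtration.

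The only subtlety—and it is a mild one—is making sure the identification $\bfD(\A)^\b\cong\bfD(\A^\b)$ is uniformly compatible as $\b$ grows, so that the union on the right is literally an increasing union of subcategories of $\bfD(\A)$; but this is automatic because the equivalences of Theorem~\ref{th:derived} are induced by the inclusions $\A^\b\hookrightarrow\A$, which are themselves compatible with $\A^\b\subseteq\A^{\b'}$ for $\b\le\b'$. Thus no genuine obstacle arises, and the corollary reduces to a bookkeeping combination of Corollaries~\ref{co:PG} and \ref{co:abelian}, Theorem~\ref{th:derived}, and the filtration $\T=\bigcup_\b\T^\b$ for well-generated $\T$.
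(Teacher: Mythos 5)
Your proposal is correct and matches the paper's intended argument: the corollary is stated with no separate proof precisely because it follows by combining Corollary~\ref{co:PG} and Corollary~\ref{co:abelian} (to get a cofinal class of regular $\b$ with $\A^\b$ abelian and generating), Theorem~\ref{th:derived} (to identify $\bfD(\A^\b)$ with $\bfD(\A)^\b$), and the filtration $\T=\bigcup_\b\T^\b$ of a well-generated triangulated category. Your attention to the $\a>\aleph_0$ hypothesis and to the compatibility of the equivalences as $\b$ grows is exactly the right bookkeeping.
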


\subsection*{Homotopy categories of injectives}

In recent work of Neeman \cite{Ne2014} it is shown that for any
Grothendieck abelian category the homotopy category of injective
objects is well-generated. Here we are slightly more specific and
provide an analogue of Theorem~\ref{th:KInj} for uncountable regular
cardinals.

\begin{theorem}\label{th:KInj2}
  Let $\A$ be a Grothendieck abelian category and $\a>\aleph_0$ a
  regular cardinal.  Suppose that $\A^\a$ is abelian and generates
  $\A$. Then the following holds:
\begin{enumerate}
\item The category $\bfK(\Inj\A)$ has small coproducts and is
  $\a$-compactly generated.
\item The left adjoint of the inclusion $\bfK(\Inj\A)\to\bfK(\A)$ restricts
to a quotient functor $\bfK(\A^\a)\twoheadrightarrow \bfK(\Inj\A)^\a$.
\item The canonical functor  $\bfK(\Inj\A)\to\bfD(\A)$ restricts
to a quotient functor $\bfK(\Inj\A)^\a \twoheadrightarrow\bfD(\A^\a)$.
\end{enumerate}
\end{theorem}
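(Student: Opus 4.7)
Set $\C:=\A^\a$, so $\A\cong\Ind_\a\C$, and mimic the proof of Theorem~\ref{th:KInj}, replacing Proposition~\ref{pr:local} by its $\a$-analogue Proposition~\ref{pr:perp}. Proposition~\ref{pr:presentation} together with the description of reflective subcategories in \cite[III.3]{Ga1962} identifies
\[\Inj\A=\{I\in\Inj\C\mid\Hom_\C(X,I)=0\text{ for all }X\in\Eff_\a\C\},\]
and the condition may be checked on the generating subcategory $\eff_\a\C\subseteq\Eff_\a\C$. Since $\eff_\a\C$ is a Serre subcategory of $\mod_\a\C$ (hence closed under finite colimits) and generates the localising subcategory $\Eff_\a\C$, it is saturated; Proposition~\ref{pr:min} therefore lifts the identification to
\[\bfK(\Inj\A)=\{I\in\bfK(\Inj\C)\mid\Hom_{\bfK(\Mod\C)}(X,I[n])=0\text{ for all }X\in\eff_\a\C,\,n\in\bbZ\}.\]

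For (1) and (3), apply Proposition~\ref{pr:perp} to $\T:=\bfK(\Inj\C)\cong\bfD(\Mod\C)$, compactly generated by Lemma~\ref{le:mod} and with $\T^\a\cong\bfD(\mod_\a\C)$ by Proposition~\ref{pr:murfet}, and to $\Sc:=\Thick_\a(\eff_\a\C)\subseteq\T^\a$. The previous step gives $\Sc^\perp=\bfK(\Inj\A)$, so Proposition~\ref{pr:perp} yields (1) together with $\bfK(\Inj\A)^\a\simeq\bfD(\mod_\a\C)/\Sc$ up to summands. To identify this quotient with $\bfD(\A^\a)=\bfD(\C)$, apply Lemma~\ref{le:relative} to the localisation sequence $\eff_\a\C\rightarrowtail\mod_\a\C\twoheadrightarrow\C$ (whose quotient admits the Yoneda functor as right adjoint) to obtain $\bfD(\mod_\a\C)/\bfD_{\eff_\a\C}(\mod_\a\C)\xto{\sim}\bfD(\C)$, and show $\Sc=\bfD_{\eff_\a\C}(\mod_\a\C)$ inside $\bfD(\mod_\a\C)$. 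The latter identification is the main obstacle: one must realise every $X$ with cohomology in $\eff_\a\C$ as an $\a$-small homotopy colimit of bounded truncations $\tau^{[-n,n]}X\in\Thick(\eff_\a\C)$, which uses $\a>\aleph_0$ (so that countable colimits are $\a$-small) and mirrors the argument already used in the proof of Theorem~\ref{th:derived}. Finally, compatibility with the factorisation $\bfK(\Inj\A)\hookrightarrow\bfK(\Inj\C)\cong\bfD(\Mod\C)\twoheadrightarrow\bfD(\A)$ of the canonical functor, combined with $\bfD(\A)^\a\cong\bfD(\C)$ from Theorem~\ref{th:derived}, yields (3).

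For (2), the inclusion $\bfK(\Inj\A)\hookrightarrow\bfK(\A)$ admits a left adjoint $L$ given by K-injective resolution, which exists in any Grothendieck abelian category. Factor the restriction of $L$ to $\bfK(\C)\hookrightarrow\bfK(\Mod\C)$ as $L'\circ L_1$, where $L_1\colon\bfK(\Mod\C)\to\bfK(\Inj\C)$ is the standard K-injective resolution and $L'\colon\bfK(\Inj\C)\to\bfK(\Inj\A)$ is the left adjoint of $\Sc^\perp\hookrightarrow\bfK(\Inj\C)$. Since $L_1$ sends a complex in $\bfK(\C)$ into $\bfK(\Inj\C)^\a\cong\bfD(\mod_\a\C)$ and $L'$ restricts on $\a$-compacts to the quotient $\bfD(\mod_\a\C)\twoheadrightarrow\bfD(\mod_\a\C)/\Sc\simeq\bfK(\Inj\A)^\a$, the image of $L|_{\bfK(\C)}$ lands in $\bfK(\Inj\A)^\a$; composing with the previous paragraph's equivalence recovers the standard Verdier quotient $\bfK(\C)\twoheadrightarrow\bfD(\C)$, proving (2).
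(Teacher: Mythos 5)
Your overall strategy is the paper's: identify $\bfK(\Inj\A)$ with $\Sc^\perp$ inside $\bfK(\Inj\C)\cong\bfD(\Mod\C)$ using Proposition~\ref{pr:min}, then invoke Proposition~\ref{pr:perp} and Proposition~\ref{pr:murfet}. The first paragraph and assertion (1) are fine. The gap is in your treatment of (3): you claim $\Sc=\Thick_\a(\eff_\a\C)$ coincides with $\bfD_{\eff_\a\C}(\mod_\a\C)$ inside $\bfD(\mod_\a\C)$, which would make $\bfK(\Inj\A)^\a\to\bfD(\A^\a)$ an \emph{equivalence}. That is false in general, and it is exactly why the theorem asserts only quotient functors. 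Indeed, if the $\a$-localising subcategory generated by $\eff_\a\C$ exhausted $\bfD_{\eff_\a\C}(\mod_\a\C)$, then (both localising subcategories being generated by their $\a$-compacts) one would get $\Loc(\eff_\a\C)=\bfD_{\Eff_\a\C}(\Mod\C)$ and hence $\bfK(\Inj\A)=\Loc(\eff_\a\C)^\perp=\bfD_{\Eff_\a\C}(\Mod\C)^\perp\simeq\bfD(\A)$; this fails already for $\A=\Mod\La$ with $\La$ self-injective and non-semisimple, where the kernel of $\bfK(\Inj\A)\to\bfD(\A)$ is the nonzero stable derived category. Your proposed fix via truncations does not work: a complex $X\in\bfD(\mod_\a\C)$ with cohomology in $\eff_\a\C$ is the homotopy colimit of the $\tau^{\le n}X$, but these are still unbounded to the left, and passing to $\tau^{\ge -m}$ requires a homotopy \emph{limit} (a product), under which localising subcategories are not closed. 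The correct argument uses only the easy inclusion $\Sc^\a\subseteq\bfD_{\eff_\a\C}(\mod_\a\C)$, so that $\bfK(\Inj\A)^\a\simeq\bfD(\mod_\a\C)/\Sc^\a$ admits a further Verdier quotient onto $\bfD(\mod_\a\C)/\bfD_{\eff_\a\C}(\mod_\a\C)\simeq\bfD(\C)=\bfD(\A^\a)$ (Lemma~\ref{le:relative} and Theorem~\ref{th:derived}); the composite is the asserted quotient functor in (3).

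Two smaller points. First, the left adjoint of $\bfK(\Inj\A)\to\bfK(\A)$ is \emph{not} K-injective resolution inside $\A$ (that is only left adjoint to the inclusion of K-injectives: for $X$ acyclic and $I$ a non-K-injective complex of injectives, $\Hom_{\bfK(\A)}(X,I)$ need not vanish while $\Hom(\bfi X,I)=0$). The paper constructs it as the composite $\bfK(\A)\rightarrowtail\bfK(\Mod\C)\twoheadrightarrow\bfD(\Mod\C)\twoheadrightarrow\bfK(\Inj\A)$, which is essentially your $L'\circ L_1$; you should take this as the definition rather than as a factorisation of a pre-existing resolution functor. Second, your argument for (2) leans on the same false identification $\bfK(\Inj\A)^\a\simeq\bfD(\C)$; the correct route is to observe that $\bfK(\mod_\a\C)\to\bfD(\mod_\a\C)\to\bfD(\mod_\a\C)/\Sc^\a\simeq\bfK(\Inj\A)^\a$ is a composite of quotient functors whose kernel contains that of $\bfK(\mod_\a\C)\twoheadrightarrow\bfK(\C)$, whence it factors as a quotient functor $\bfK(\C)\twoheadrightarrow\bfK(\Inj\A)^\a$.
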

\begin{proof}
  We adapt the proof of Theorem~\ref{th:KInj} using the description of
  $\A$ via Proposition~\ref{pr:presentation}. As before, set
  $\C:=\A^\a$ and identify $\A\xto{\sim}\Ind_\a\C$. Consider
  \[\eff_\a\C\subseteq\bfD(\mod_\a\C)=\bfD(\mod\C)^\a\] and let
  \[\Sc:=\Loc(\eff_\a\C)\subseteq\bfD(\Mod\C)\] denote the localising subcategory
  generated by $\eff_\a\C$. Then $\bfK(\Inj\A)$ identifies with
  $\Sc^\perp$ in $\bfD(\Mod\C)$, and it follows from
  Proposition~\ref{pr:perp} that $\bfK(\Inj\A)$ is $\a$-compactly
  generated.  Moreover,
  \[\bfD(\Mod\C)/\Sc\stackrel{\sim}\lto\bfK(\Inj\A)\qquad\text{and}\qquad
\bfD(\mod_\a\C)/\Sc^\a\stackrel{\sim}\lto\bfK(\Inj\A)^\a.\] 

Our assertions about $\bfK(\Inj\A)^\a$ follow by inspection of the
following commuting diagram.
\[\begin{tikzcd}[row sep=3.7ex, column sep=3.7ex] 
  \bfK(\mod_\a\C)
  \arrow[tail]{rd}\arrow[twoheadrightarrow]{rr}\arrow[twoheadrightarrow]{dd}
  &&\bfK(\C)\arrow[tail]{rd}\arrow[twoheadrightarrow]{dd}\arrow[equal]{rr}&&
  \bfK(\C) \arrow[tail]{rd}\arrow[twoheadrightarrow]{dd}\\
  & \bfK(\Mod\C) \arrow[crossing over,
  twoheadrightarrow,yshift=0.75ex]{rr} && \bfK(\A)
  \arrow[crossing over, tail,yshift=-0.75ex]{ll}\arrow[crossing over,equal]{rr}&&\bfK(\A)\\
  \bfD(\mod_\a\C)\arrow[twoheadrightarrow]{rr}\arrow[tail]{rd}&&\bfK(\Inj\A)^\a
  \arrow[tail]{rd}\arrow[twoheadrightarrow]{rr}&&\bfD(\A)^\a
  \arrow[tail]{rd}\\
  &\bfD(\Mod\C)\arrow[twoheadrightarrow,yshift=0.75ex]{rr}
  \arrow[crossing over,tail,xshift=0.75ex]{uu}\arrow[crossing
  over,twoheadleftarrow,xshift=-0.75ex]{uu} && \bfK(\Inj\A)
  \arrow[tail,yshift=-0.75ex]{ll}\arrow[crossing
  over,tail,xshift=0.75ex]{uu} \arrow[crossing
  over,twoheadleftarrow,xshift=-0.75ex]{uu}\arrow[twoheadrightarrow,yshift=0.75ex]{rr}&&
  \arrow[tail,yshift=-0.75ex]{ll}\arrow[tail,xshift=0.75ex]{uu}
  \arrow[twoheadleftarrow,xshift=-0.75ex]{uu}\bfD(\A)
\end{tikzcd}\] We omit details but explain the construction of the
diagram. The two bottom rows are obtained by localising 
\[\bfK(\Inj\C)=\bfD(\Mod\C)\] with respect to $\Loc(\eff_\a\C)$ and
$\bfD_{{\Eff_\a}\C}(\Mod\C)$, restricting the left adjoints to the
full subcategories of $\a$-compact objects, and keeping in mind
that \[\Loc(\eff_\a\C)\subseteq\bfD_{{\Eff_\a}\C}(\Mod\C).\] The two
top rows follow from Proposition~\ref{pr:presentation}.  The left
adjoint of the inclusion $\bfK(\Inj\A)\to\bfK(\A)$ is obtained by
taking the composite
\[\bfK(\A)\rightarrowtail\bfK(\Mod\C)\twoheadrightarrow\bfD(\Mod\C)
\twoheadrightarrow\bfK(\Inj\A).\qedhere\]
\end{proof}

\begin{corollary}
  The inclusion $\bfK(\Inj\A)\to\bfK(\A)$ admits a left adjoint.
\end{corollary}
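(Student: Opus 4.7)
The plan is to reduce the statement to Theorem~\ref{th:KInj2} by producing a regular cardinal $\a$ that satisfies its hypotheses. By Corollary~\ref{co:PG}, there exists a regular cardinal $\a$ for which $\A^\a$ is abelian; by Corollary~\ref{co:abelian}, I may enlarge $\a$ without losing this property, so I can assume $\a>\aleph_0$. Since $\A$ is locally $\a$-presentable, every object is an $\a$-filtered colimit---hence a quotient of a coproduct---of $\a$-presentable objects, and so $\A^\a$ generates $\A$. Both hypotheses of Theorem~\ref{th:KInj2} are therefore in place.

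With the hypotheses verified, part~(2) of Theorem~\ref{th:KInj2} directly asserts the existence of the left adjoint to the inclusion $\bfK(\Inj\A)\to\bfK(\A)$. The proof of that theorem provides an explicit construction as the composite
\[
\bfK(\A)\rightarrowtail\bfK(\Mod\A^\a)\twoheadrightarrow\bfD(\Mod\A^\a)\twoheadrightarrow\bfK(\Inj\A),
\]
where the first arrow is induced by the fully faithful embedding $\A\simeq\Ind_\a\A^\a\hookrightarrow\Mod\A^\a$ coming from Proposition~\ref{pr:presentation}, the middle arrow is the canonical quotient (identifying $\bfD(\Mod\A^\a)\simeq\bfK(\Inj\A^\a)$ via Lemma~\ref{le:mod}), and the third arrow is the Bousfield-type left adjoint produced by Proposition~\ref{pr:perp} applied to $\Sc=\Loc(\eff_\a\A^\a)\subseteq\bfD(\Mod\A^\a)$, with $\bfK(\Inj\A)\simeq\Sc^\perp$.

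There is no genuine obstacle; the corollary is essentially a repackaging of Theorem~\ref{th:KInj2} combined with the existence of a sufficiently large regular cardinal guaranteed by Corollaries~\ref{co:PG} and~\ref{co:abelian}. The only mild point worth flagging is the strict inequality $\a>\aleph_0$ required by the theorem, which is precisely why Corollary~\ref{co:abelian} (allowing enlargement of $\a$) is invoked alongside Corollary~\ref{co:PG}.
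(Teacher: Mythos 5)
Your proof is correct and follows essentially the same route as the paper: the paper's own proof simply points to the explicit left adjoint constructed at the end of the proof of Theorem~\ref{th:KInj2}, namely the composite $\bfK(\A)\rightarrowtail\bfK(\Mod\A^\a)\twoheadrightarrow\bfD(\Mod\A^\a)\twoheadrightarrow\bfK(\Inj\A)$, which is exactly what you describe. Your extra care in producing a regular cardinal $\a>\aleph_0$ with $\A^\a$ abelian and generating (via Corollaries~\ref{co:PG} and~\ref{co:abelian}) fills in a detail the paper leaves implicit.
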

\begin{proof}
  The proof of Theorem~\ref{th:KInj2} yields an explicit left adjoint;
  for other constructions see \cite[Example~5]{Kr2012} and
  \cite[Theorem~2.13]{Ne2014}.
\end{proof}

\subsection*{The stable derived category}

Following Buchweitz \cite{Bu1987} and Orlov \cite{Or2004}, we define
the \emph{stable derived category} of a Grothendieck abelian category
$\A$ as the full subcategory of acyclic complexes in $\bfK(\Inj\A)$.
This is precisely the definition given in \cite{Kr2005} for a locally
noetherian category, and we denote this category by $\bfS(\A)$.

\begin{corollary}
  Let $\A$ be a Grothendieck abelian category and $\a>\aleph_0$ a
  regular cardinal.  Suppose that $\A^\a$ is abelian and generates
  $\A$. Then the stable derived category $\bfS(\A)$ is $\a$-compactly
  generated and fits into the following localisation sequence:
\[\begin{tikzcd}  
\bfS(\A) \arrow[tail,yshift=0.75ex]{rr} && \bfK(\Inj\A)
  \arrow[twoheadrightarrow,yshift=-0.75ex]{ll}
  \arrow[twoheadrightarrow,yshift=0.75ex]{rr} &&\bfD(\A)
  \arrow[tail,yshift=-0.75ex]{ll}
\end{tikzcd}\]
Moreover, the left adjoints preserve $\a$-compactness.
\end{corollary}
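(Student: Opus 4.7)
The plan is to present $\bfS(\A)$ as a Verdier quotient of two nested localising subcategories inside $\bfD(\Mod\C)$ with $\C:=\A^\a$, and then apply the localisation theory for well-generated triangulated categories. From the proof of Theorem~\ref{th:KInj2}, setting $\C:=\A^\a$ and identifying $\A\xto{\sim}\Ind_\a\C$, one has localising subcategories $\Loc(\eff_\a\C)\subseteq\bfD_{\Eff_\a\C}(\Mod\C)$ of $\bfD(\Mod\C)$, with quotient equivalences $\bfD(\Mod\C)/\Loc(\eff_\a\C)\xto{\sim}\bfK(\Inj\A)$ and $\bfD(\Mod\C)/\bfD_{\Eff_\a\C}(\Mod\C)\xto{\sim}\bfD(\A)$. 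The canonical functor $\bfK(\Inj\A)\to\bfD(\A)$ is therefore a Verdier quotient whose kernel is the image of $\bfD_{\Eff_\a\C}(\Mod\C)$ in $\bfK(\Inj\A)$; an object of $\bfK(\Inj\A)$ is acyclic in $\A$ exactly when its cohomology in $\Mod\C$ lies in $\Eff_\a\C$, so this kernel is precisely $\bfS(\A)$.

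Next I would verify $\a$-compact generation by two applications of Proposition~\ref{pr:triangle-quotient}. The ambient category $\bfD(\Mod\C)$ is compactly, and hence $\a$-compactly, generated by Proposition~\ref{pr:murfet}. The proof of Theorem~\ref{th:derived} shows that $\bfD_{\Eff_\a\C}(\Mod\C)$ is generated as a localising subcategory by $\bfD_{\eff_\a\C}(\mod_\a\C)\subseteq\bfD(\Mod\C)^\a$, so a first application of Proposition~\ref{pr:triangle-quotient} makes it $\a$-compactly generated. Since $\eff_\a\C\subseteq\bfD(\mod_\a\C)=\bfD(\Mod\C)^\a$ and $\eff_\a\C\subseteq\bfD_{\Eff_\a\C}(\Mod\C)$, these objects remain $\a$-compact in $\bfD_{\Eff_\a\C}(\Mod\C)$. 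A second application of Proposition~\ref{pr:triangle-quotient} to the inclusion $\Loc(\eff_\a\C)\subseteq\bfD_{\Eff_\a\C}(\Mod\C)$ then yields that $\bfS(\A)$ is $\a$-compactly generated, with the quotient restricting on $\a$-compacts to a quotient functor (up to direct summands).

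To assemble the localisation sequence, observe that the right adjoint to $\bfK(\Inj\A)\twoheadrightarrow\bfD(\A)$ exists by Bousfield localisation (its essential image is the subcategory of K-injective complexes), and the inclusion $\bfS(\A)\rightarrowtail\bfK(\Inj\A)$ admits a right adjoint given by the corresponding acyclisation functor. For the final claim on $\a$-compactness, the left adjoint $F\colon\bfK(\Inj\A)\to\bfD(\A)$ restricts on $\a$-compacts to $\bfK(\Inj\A)^\a\twoheadrightarrow\bfD(\A^\a)=\bfD(\A)^\a$ by Theorem~\ref{th:KInj2}(3), and the left adjoint $E\colon\bfS(\A)\to\bfK(\Inj\A)$ is the inclusion, which preserves $\a$-compactness because $\bfS(\A)^\a=\bfS(\A)\cap\bfK(\Inj\A)^\a$ by Proposition~\ref{pr:triangle-quotient}(2).

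The only delicate point is keeping the two nested Bousfield localisations straight and ensuring compatibility of the various $\a$-compact structures. The essential non-formal input is the fact (from the proof of Theorem~\ref{th:derived}) that $\bfD_{\Eff_\a\C}(\Mod\C)$ is generated as a localising subcategory by the $\a$-compact objects $\bfD_{\eff_\a\C}(\mod_\a\C)$; this uses a countable homotopy colimit argument and crucially relies on $\a>\aleph_0$. Once this is in hand, everything else is routine bookkeeping via Proposition~\ref{pr:triangle-quotient}.
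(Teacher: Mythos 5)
Your proposal is correct and follows essentially the same route as the paper's proof: both reduce the statement to the localisation theory for $\a$-compactly generated categories (Proposition~\ref{pr:triangle-quotient}) applied to the presentations of $\bfK(\Inj\A)$ and $\bfD(\A)$ as quotients of $\bfD(\Mod\C)$, $\C=\A^\a$, established in Theorems~\ref{th:derived} and~\ref{th:KInj2}. The paper's version is merely far more compressed---it cites those two theorems and Proposition~\ref{pr:triangle-quotient} for the kernel---whereas you usefully make explicit the one non-formal input, namely that $\bfD_{\Eff_\a\C}(\Mod\C)$ is generated as a localising subcategory by the $\a$-compact objects $\bfD_{\eff_\a\C}(\mod_\a\C)$, which is exactly what the paper's appeal to Proposition~\ref{pr:triangle-quotient} silently relies on.
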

\begin{proof}
  The canonical functor $\bfK(\Inj\A)\to\bfD(\A)$ is a functor between
  $\a$-compactly generated triangulated categories by
  Theorems~\ref{th:derived} and \ref{th:KInj2}; it determines the
  localisation sequence. In particular, its kernel is $\a$-compactly
  generated by Proposition~\ref{pr:triangle-quotient}.
\end{proof}

\subsection*{Acknowledgements}

I am grateful to Jan \v{S}\v{t}ov\'i\v{c}ek for sharing the preprint
\cite{St2011} containing a precursor of Proposition~\ref{pr:pac}.
Also, I wish to thank Amnon Neeman for carefully reading a preliminary
version; as always his comments were most helpful.

\end{document}